\newcommand{\svw}[1]{\textcolor{black}{#1}}
\theoremstyle{definition}
\newtheorem{theorem}{Theorem}[section]
\newtheorem*{theorem*}{Theorem}
\newtheorem{proposition}[theorem]{Proposition}
\newtheorem{lemma}[theorem]{Lemma}
\newtheorem{corollary}[theorem]{Corollary}
\newtheorem{conjecture}[theorem]{Conjecture}
\newtheorem{definition}[theorem]{Definition}
\newtheorem{example}[theorem]{Example}
\newtheorem{observation}[theorem]{Observation}
\theoremstyle{remark}
\newtheorem{remark}[theorem]{Remark}
\numberwithin{equation}{section}
\newcommand{\spam}{\operatorname{span}}
\newcommand{\type}{\operatorname{type}} 
\newcommand{\mt}{(\emptyset)} 
\newcommand{\Sym}{\ensuremath{\operatorname{Sym}}}
\newcommand{\pcup}{\cup}
\newcommand{\NCSym}{\ensuremath{\operatorname{NCSym}}}
\newcommand{\slashp}{\mid}
\newcommand{\NCQSym}{\ensuremath{\operatorname{NCQSym}}}
\newcommand{\UBCSym}{\ensuremath{\operatorname{UBCSym}}}
\newcommand{\y}{y}
\newcommand{\projUBC}{\nu} 
\newcommand{\projSym}{\bar\rho} 
\newcommand{\UBCQSym}{\ensuremath{\operatorname{UBCQSym}}}
\newcommand{\Sg}{\mathcal S} 
\newcommand{\sink}{\operatorname{sink}}
\newcommand{\at}{:}
\newcommand{\duct}{\mathord{\uparrow}} 
\newcommand{\Q}{Q} 
\newcommand{\set}{\operatorname{set}}
\newcommand{\eps}{\varepsilon}
\newcommand{\sepsc}{\mathbin{/\mkern-6mu/}} 
\newlength\cellsize \setlength\cellsize{15\unitlength}
\newcommand\cellify[1]{\def\thearg{#1}\def\nothing{}%
\ifx\thearg\nothing
\vrule width0pt height\cellsize depth0pt\else
\hbox to 0pt{\usebox2\hss}\fi%
\vbox to 15\unitlength{
\vss
\hbox to 15\unitlength{\hss$#1$\hss}
\vss}}
\newcommand\tableau[1]{\vtop{\let\\=\cr
\setlength\baselineskip{-16000pt}
\setlength\lineskiplimit{16000pt}
\setlength\lineskip{0pt}
\halign{&\cellify{##}\cr#1\crcr}}}
\newcommand\expath[1]{%
\hbox to 0pt{\usebox3\hss}%
\vbox to 15\unitlength{
\vss
\hbox to 15\unitlength{\hss$#1$\hss}
\vss}}
\newcommand\bas[1]{\omit \vbox to \cellsize{ \vss \hbox to \cellsize{\hss$#1$\hss} \vss}}
\begin{document}

\title[The chromatic symmetric function of a graph centred at a vertex]{The chromatic symmetric function\\ of a graph centred at a vertex}

\author{Farid Aliniaeifard}
\address{
 Department of Mathematics,
 University of British Columbia,
 Vancouver BC V6T 1Z2, Canada}
\email{farid@math.ubc.ca}

\author{Victor Wang}
\address{
 Department of Mathematics,
 University of British Columbia,
 Vancouver BC V6T 1Z2, Canada}
\email{vyzwang@student.ubc.ca}

\author{Stephanie van Willigenburg}
\address{
 Department of Mathematics,
 University of British Columbia,
 Vancouver BC V6T 1Z2, Canada}
\email{steph@math.ubc.ca}

\thanks{
All authors were supported in part by the National Sciences and Engineering Research Council of Canada.}
\subjclass[2010]{05A18,
05C15,
05C20,
05C25,
05E05,
16T30}
\keywords{acyclic orientation, chromatic symmetric function, elementary symmetric function,
positivity,
symmetric function in noncommuting variables}

\begin{abstract} 
We discover new linear relations between the chromatic symmetric functions of certain sequences of graphs and apply these relations to find new families of $e$-positive unit interval graphs. Motivated by the results of Gebhard and Sagan, we revisit their ideas and reinterpret their equivalence relation in terms of a new quotient algebra of $\NCSym$. We investigate the projection of the chromatic symmetric function $Y_G$ in noncommuting variables in this quotient algebra, which defines $\y_{G\at v}$, the chromatic symmetric function of a graph $G$ centred at a vertex $v$. We then apply our methods to $\y_{G\at v}$ and find new families of unit interval graphs that are $(e)$-positive, a stronger condition than {classical} $e$-positivity, {thus confirming new cases of the $(3+1)$-free conjecture of Stanley and Stembridge.}

In our study of $\y_{G\at v}$, we also describe methods of constructing new $e$-positive graphs from given $(e)$-positive graphs and classify the $(e)$-positivity of trees and cut vertices. We moreover construct a related quotient algebra of $\NCQSym$ to prove theorems relating the coefficients of $\y_{G\at v}$ to acyclic orientations of graphs, including a noncommutative refinement of Stanley's sink theorem.
\end{abstract}

\maketitle
\tableofcontents

\section{Introduction}\label{sec:intro}
The chromatic symmetric function $X_G$ of a graph $G$ was introduced by Stanley \cite{Stan95} in 1995 as a generalization of Birkhoff's chromatic polynomial \cite{Birk}. Since then, it has inspired fruitful research mainly in two avenues. The first avenue is to determine whether two nonisomorphic trees can have the same chromatic symmetric function \cite{Jose2, HeilJi, LS, MMW}. Heil and Ji showed in \cite{HeilJi} that there was no counterexample on $\le29$ vertices. However, the second and more prominent avenue of research is to prove the Stanley-Stembridge conjecture \cite[Conjecture 5.5]{StanStem}, which was formulated in terms of chromatic symmetric functions by Stanley in \cite[Conjecture 5.1]{Stan95}. In 2013, Guay-Paquet \cite{MGP} showed that to prove the Stanley-Stembridge conjecture it was sufficient to prove that all unit interval graphs were $e$-positive, namely that their chromatic symmetric functions expanded with nonnegative coefficients in the basis of elementary symmetric functions.

Much interest has also arisen due to a $q$-analogue of the conjecture, introduced by Shareshian and Wachs \cite{SW} in terms of the chromatic quasisymmetric functions of labelled unit interval graphs. Brosnan and Chow \cite{BC} and, independently, Guay-Paquet \cite{MGP2} proved that the chromatic quasisymmetric functions of labelled unit interval graphs were related to the cohomology of regular semisimple Hessenberg varieties, first conjectured by Shareshian and Wachs in \cite{SW}, and this connection has been used to {prove a special case of the $q$-analogue} of the conjecture in \cite{MM}. Considerable progress has been made on the Stanley-Stembridge conjecture and Shareshian and Wachs' quasisymmetric refinement also due to the study of the modular law, which appears in several forms across \cite{AN,DvWpop,MGP,HuhNamYoo} and is a consequence of Orellana and Scott's triple-deletion rule \cite{OS} when $q=1$.

A second approach to the Stanley-Stembridge conjecture was pioneered by Gebhard and Sagan in \cite{GebSag}, where they introduced $Y_G$, the chromatic symmetric function of a graph $G$ in noncommuting variables. Intriguingly, the natural labelling of unit interval graphs was also important to their approach, as it was for chromatic quasisymmetric functions. Gebhard and Sagan proved for a large family of labelled unit interval graphs that the graphs were $(e)$-positive at their last vertices (see Definition~\ref{def:(e)pos}), a condition stronger than being $e$-positive arising from an equivalence relation on {the algebra of} $\NCSym$. Dahlberg in \cite{Dladders} proved that triangular ladders were $(e)$-positive at their last vertices using a sign-reversing involution, resolving a {special} case of the conjecture identified by Stanley in \cite{Stan95}. {Our} paper makes progress on the Stanley-Stembridge conjecture by combining the modular law with $(e)$-positivity to prove new cases of the conjecture. We also investigate in-depth the equivalence relation introduced by Gebhard and Sagan, including, in particular, the equivalence class of $Y_G$. More precisely, our paper is structured as follows.

We cover the necessary background in Section~\ref{sec:bg}. In Section~\ref{sec:arithprog} we describe in Proposition~\ref{prop:ap} sequences of graphs with the property that their chromatic symmetric functions form an arithmetic progression, which may be applied to deduce positivity. In Corollary~\ref{cor:eavg}, we specialize and obtain a tool to prove new cases of the Stanley-Stembridge conjecture, which we then apply in Proposition~\ref{prop:typeII} to prove the $e$-positivity of a new family of unit interval graphs. In Section~\ref{sec:UBCSym} we review the methods and results of Gebhard and Sagan from \cite{GebSag}, and reinterpret them in terms of a new quotient algebra {exhibiting ``unbalanced commutativity''}, $\UBCSym$, of $\NCSym$, for which we define analogues of the elementary, power sum and monomial bases. In $\UBCSym$ we also define $\y_{G\at v}$, the chromatic symmetric function of $G$ centred at $v$. We then meld the ideas of Gebhard and Sagan and the ideas of Section~\ref{sec:arithprog} to prove that many more families of labelled unit interval graphs are $(e)$-positive at their last vertices in Section~\ref{sec:new(e)}. In Section~\ref{sec:complete} we introduce a {technique} to work with linear maps on $\UBCSym$, which allows us to prove the validity of methods of constructing new $e$-positive graphs from given $(e)$-positive graphs in Theorems~\ref{the:G+Hr} and \ref{the:G-v}. In Section~\ref{sec:tree} we resolve the related questions of when trees are $(e)$-positive and which graphs are $(e)$-positive at a cut vertex. We construct a quotient algebra $\UBCQSym$ of $\NCQSym$ {in} Section~\ref{sec:UBCQSym} and prove in \svw{Theorem~\ref{the:vsink}  a noncommutative refinement of Stanley's {sink theorem} \cite[Theorem 3.3]{Stan95}.} {We conclude the paper with Section~\ref{sec:further}, in which we discuss the connections to a construction of Pawlowski \cite{Paw}, as well as possible further avenues of research.}

\section{Background}\label{sec:bg}
In this section we review the necessary background and notation that will be used in the rest of the paper. \svw{This section may be skipped, or referred back to later, by those familiar with algebraic combinatorics.}

An (integer) \textit{composition} $\alpha = (\alpha_1,\dots,\alpha_{\ell(\alpha)})$ is a finite ordered list of positive integers, where $\ell(\alpha)$ is the \textit{length} of $\alpha$. We call the integers the \textit{parts} of the composition. When $\alpha_{j+1}=\cdots=\alpha_{j+m}=i$, we often abbreviate this sublist to $i^m$. If $\alpha_1+\cdots+\alpha_{\ell(\alpha)}=d$, we say that $\alpha$ is a composition of $d$. We will also write $\emptyset$ to denote the empty composition. 

Let $[d]=\{1,\dots,d\}$. If $\alpha= (\alpha_1,\dots,\alpha_{\ell(\alpha)})$ is a composition of $d$, then we define $\text{set}(\alpha)$ to be the set $\{\alpha_1, {\alpha_1 +\alpha _2,} \dots,\alpha_1+\cdots+\alpha_{\ell(\alpha)-1}\}\subseteq [d-1]$. This induces a natural one-to-one correspondence between the compositions of $d$ and the subsets of $[d-1]$.

An (integer) \textit{partition} $\lambda = (\lambda_1,\dots,\lambda_{\ell(\lambda)})$ is a composition with {parts} satisfying $\lambda_1\ge\cdots\ge\lambda_{\ell(\lambda)}$. If $\lambda_1+\cdots+\lambda_{\ell(\lambda)}=d$, then we say that $\lambda$ is a partition of $d$ and write $\lambda\vdash d$. We also define $\lambda!$ to mean the quantity $\lambda_1!\cdots\lambda_{\ell(\lambda)}!$. {Given two partitions $\lambda$ and $\mu$, we write $\lambda\cup\mu$ to denote the partition obtained by combining the parts of $\lambda$ and $\mu$ together in weakly decreasing order.} 

We next define $\Sym$, the \textit{algebra of symmetric functions}, which may be realized as a subalgebra of $\mathbb Q[[x_1,x_2,\dots]]$, where the variables $x_j$ commute, as follows. The \textit{$i$th elementary symmetric function} $e_i$ is defined by
$$e_i = \sum_{j_1<\cdots<j_i}x_{j_1}\cdots x_{j_i}.$$
Given a partition $\lambda=(\lambda_1,\dots,\lambda_{\ell(\lambda)})$, we define the \textit{elementary symmetric function $e_\lambda$} to be
\begin{equation*}
    e_\lambda=\prod_{i=1}^{\ell(\lambda)} e_{\lambda_i}.
\end{equation*}
$\Sym$ can be defined as the graded algebra
\begin{equation*}
    \Sym=\Sym^0\oplus \Sym^1\oplus\cdots
\end{equation*}
where for each $d\in\mathbb Z_{\ge0}$, the $d$th graded piece $\Sym^d$ is spanned by the basis $\{e_\lambda\}_{\lambda\vdash d}$.

Another basis of $\Sym$ consists of the power sum symmetric functions. The \textit{$i$th power sum symmetric function} $p_i$ is defined by
$$p_i = \sum_{j}x_j^i,$$
and {we define} the \textit{power sum symmetric function $p_\lambda$} {for a partition $\lambda=(\lambda_1,\dots,\lambda_{\ell(\lambda)})$ to be}
$$p_\lambda=\prod_{i=1}^{\ell(\lambda)} p_{\lambda_i}.$$
{Then the} set $\{p_\lambda\}_{\lambda\vdash d}$ forms a basis for $\Sym^d$.

{We now turn our attention to graphs.} All graphs in this paper will be \textit{finite} and \textit{simple}. That is, our graphs $G$ will consist of a nonempty finite vertex set $V(G)$ and a finite edge set $E(G)$ consisting of pairs of distinct vertices. For $v,w\in V(G)$, we write $vw$ to mean an edge connecting $v$ and $w$. {Given a graph $G$ with vertices $v,w$, we write $G+vw$ to denote the graph obtained by adding an edge connecting $v$ and $w$ to $G$.} The \textit{order} $|G|$ of a graph $G$ is the number of vertices of the graph.

\svw{Using the standard notation,} given a vertex $v\in V(G)$, its \textit{open neighbourhood} $N(v)$ is the set of all vertices of $G$ connected by an edge to $v$. The \textit{closed neighbourhood} $N[v]$ of $v$ is the set $N(v)\cup\{v\}$.

A \textit{proper colouring} of a graph $G$ is a map $\svw{\kappa:V(G)\to\mathbb Z_{>0}}$ such that $\kappa(v)\neq \kappa(w)$ whenever $vw\in E(G)$. In 1995, Stanley defined the chromatic symmetric function of $G$ in commuting variables as follows.

\begin{definition}\label{def:XG}
\cite[Definition 2.1]{Stan95}
Let $G$ be a graph with vertex set $\{v_1,\dots,v_d\}$. Then the \textit{chromatic symmetric function} of $G$ is defined to be
\begin{equation*}
    X_G = \sum_\kappa x_{\kappa(v_1)}\cdots x_{\kappa(v_d)}
\end{equation*}
where the sum is over all proper colourings $\kappa$ of $G$.
\end{definition}

Given a basis $\{b_i\}_{i\in I}$ of a vector space \svw{over $\mathbb Q$,} we say that an element of the space is \textit{$b$-positive} if it expands in the $b$-basis with all coefficients nonnegative. We will say that a graph $G$ is {\emph{$e$-positive}} if and only if $X_G$ is $e$-positive.

A \textit{set partition} $\pi$ of $[d]$ is a collection of disjoint nonempty sets $B_1,\dots,B_{\ell(\pi)}$ whose union is $[d]$, and we denote this by
$$\pi=B_1/\cdots/B_{\ell(\pi)}\vdash[d].$$
We call the $B_j$ for $1\le j \le \ell(\pi)$ the \textit{blocks} of $\pi$ and $\ell(\pi)$ the \textit{length} of $\pi$. For ease of notation we usually omit the set parentheses and commas of the blocks. We also define $\lambda(\pi)$ to be the integer partition of $d$ whose parts are $|B_1|,\dots,|B_{\ell(\pi)}|$ sorted in weakly decreasing order. We will let $\pi!$ denote $\lambda(\pi)!$. For a set partition $\pi\vdash[d]$ we will use the notation $B_{\pi,i}$ for $i\in [d]$ to mean the block of $\pi$ containing $i$.

For a finite set of integers $S$, define $S+n=\{s+n:s\in S\}$. Then for two set partitions $\pi\vdash[n]$ and $\sigma=B_1/\cdots/B_{\ell(\sigma)}\vdash[m]$, their \textit{slash product} is defined to be
$$\pi\slashp \sigma=\pi/(B_1+n)/\cdots/(B_{\ell(\sigma)}+n)\vdash[n+m].$$

We next define $\NCSym$, the \textit{algebra of symmetric functions in noncommuting variables}, which may be realized as a subalgebra of $\mathbb Q\langle\langle x_1,x_2,\dots\rangle\rangle$, where the variables $x_j$ do not commute. $\NCSym$ can be defined as the graded algebra
$$\NCSym=\NCSym^0\oplus\NCSym^1\oplus\cdots,$$ where the $d$th graded piece is spanned by the bases $\{e_\pi\}_{\pi\vdash [d]}$, $\{p_\pi\}_{\pi\vdash [d]}$ and $\{m_\pi\}_{\pi\vdash[d]}$, which we define next.

The \textit{elementary symmetric function} $e_\pi$ in $\NCSym$ is given by $$e_\pi=\sum_{(i_1,\dots,i_d)}x_{i_1}\cdots x_{i_d},$$ summed over all tuples $(i_1,\dots,i_d)$ with $i_j\neq i_k$ if $B_{\pi,j}=B_{\pi,k}$. For set partitions $\pi,\sigma$, we have $e_\pi e_\sigma = e_{\pi\slashp\sigma},$ e.g. by \cite[Lemma 2.1]{Dladders}.

The \textit{power sum symmetric function} $p_\pi$ in $\NCSym$ is given by
$$p_\pi=\sum_{(i_1,\dots,i_d)}x_{i_1}\cdots x_{i_d},$$ summed over all tuples $(i_1,\dots,i_d)$ with $i_j= i_k$ if $B_{\pi,j}=B_{\pi,k}$.

Finally, the \textit{monomial symmetric function} $m_\pi$ in $\NCSym$ is given by
$$m_\pi= \sum_{(i_1,\dots,i_d)}x_{i_1}\cdots x_{i_d},$$ summed over all tuples $(i_1,\dots,i_d)$ with $i_j=i_k$ if and only if $B_{\pi,j}=B_{\pi,k}$.

There is an algebra map $\rho: \NCSym\to \Sym$ obtained by allowing the variables to commute. By parts (ii) and (iii) of \cite[Theorem 2.1]{RS}, we have $\rho(e_\pi)=\pi!e_{\lambda(\pi)}$ and $\rho(p_\pi)=p_{\lambda(\pi)}$.

We will also define an action of the symmetric group $\Sg_d$ on the $d$th graded piece of $\NCSym$ by permuting the positions of the variables. For $\delta\in \Sg_d$, we define the \svw{right} action on monomials by
$$\delta\circ (x_{i_1}\cdots x_{i_d})=x_{i_{\delta^{-1}(1)}}\cdots x_{i_{\delta^{-1}(d)}}$$
and extend linearly. For $\pi\vdash[d]$, we then have \svw{a left action} $\delta\circ m_\pi = m_{\delta(\pi)}$, $\delta\circ e_\pi = e_{\delta(\pi)}$ and $\delta\circ p_\pi = p_{\delta(\pi)}$ by \cite[Section 2]{GebSag}, where the action of $\delta$ on set partitions of $[d]$ is by permuting the elements of the blocks.

Gebhard and Sagan in \cite[Definition 3.4]{GebSag} defined a linear operation called \textit{induction}, $\duct$, on $\NCSym^d$ for $d\in\mathbb Z_{>0}$, by defining it on monomials via
$$(x_{i_1}\cdots x_{i_d})\duct = x_{i_1}\cdots x_{i_d}x_{i_d},$$
and extending linearly. Similarly, for $j\le d$, they also defined $\duct_j^{d+1}$ on $\NCSym^d$ by defining
$$(x_{i_1}\cdots x_{i_d})\duct_j^{d+1} = x_{i_1}\cdots x_{i_d}x_{i_j},$$ and extending linearly.

A \textit{labelled graph} on $d$ vertices is a graph with vertex set $[d]$. We can also define the action of $\delta\in\Sg_d$ on labelled graphs on $d$ vertices, by letting $\delta$ act by permuting the vertex labels. The labelled graph $\delta(G)$ is then just a relabelling of $G$. We also define \textit{reverse graph} $G^r$ of a labelled graph $G$ on $d$ vertices to be the labelled graph $\delta(G)$, where $\delta\in\Sg_d$ is the permutation exchanging each $i$ with $d+1-i$.

A \textit{labelled unit interval graph} is a labelled graph $G$ with the property that whenever $i\le v<w \le j$ and $ij\in E(G)$, then $vw\in E(G)$ as well.

\begin{definition}
Given a labelled unit interval graph on $d$ vertices, we associate two weakly increasing sequences $(m_i)_{i=1}^d$ and $(w_i)_{i=1}^d$, where $m_i\ge i$ is the largest label in $N[i]$ and $w_i\le i$ is the {smallest} label in $N[i]$.
\end{definition}

Note the closed neighbourhood $N[i]$ is given by the set $\{w_i,w_i+1,\dots,m_i\}$. Either of the two sequences is sufficient to uniquely determine the labelled unit interval graph.

Some labelled unit interval graphs we require familiarity with are the \textit{path} $P_d$ on $d$ vertices with an edge between $i$ and $i+1$ for each $i\in [d-1]$ and the \textit{complete graph} $K_d$ on $d$ vertices with an edge between every pair of distinct vertices. The cycle $C_d$ for $d\ge 3$ is obtained by adding an edge between $1$ and $d$ to the path $P_d$. We also define $K_\pi$ for $\pi\vdash[d]$ to be the labelled graph on $d$ vertices with an edge between $i\neq j$ if and only if $B_{\pi,i}=B_{\pi,j}$.

Given two labelled graphs $G$ and $H$ on $n$ and $m$ vertices, respectively, define $G\mid H$ to be the disjoint union of $G$ and $H$, where the vertices corresponding to $G$ have labels in $[n]$ in the same relative order as in $G$, and the vertices corresponding to $H$ have labels in $[m]+n$ in the same relative order as in $H$. We also define the \textit{concatenation} $G+H$ with vertex set $[n+m-1]$ to be the labelled graph obtained from $G\mid H$ by formally identifying vertices $n$ and $n+1$ of $G\mid H$ and otherwise shifting labels so that the vertices of $G+H$ have labels in the same relative order as in $G\mid H$. For a sequence $(G_j)_{j=1}^k$ of labelled graphs, define $\sum_{j=1}^kG_j$ to mean $G_1+\cdots + G_k$. If $k=0$, we take the convention that $\sum_{j=1}^kG_j = K_1$. Note when $G,H$ are labelled unit interval graphs that $G^r$, $G\mid H$ and $G+H$ are all also labelled unit interval graphs.

Guay-Paquet showed in \cite[Theorem 5.1]{MGP} that the Stanley-Stembridge conjecture is equivalent to the following.

\begin{conjecture}\label{conj:StanStem}
All labelled unit interval graphs are $e$-positive.
\end{conjecture}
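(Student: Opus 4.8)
The statement is precisely the Stanley--Stembridge conjecture in Guay-Paquet's reformulation, so a complete proof is not available; what I would attempt, and what this paper pursues, is to enlarge the set of labelled unit interval graphs that can be certified $e$-positive, working toward the whole class. The plan is to attack the conjecture piecewise. First, given a labelled unit interval graph $G$, I would try to write $X_G$ as a nonnegative linear combination of chromatic symmetric functions of smaller or structurally simpler unit interval graphs by invoking the modular law (the $q=1$ case of Orellana and Scott's triple-deletion rule): if $e$-positivity is already known for the simpler pieces, it is inherited by $G$. The arithmetic-progression phenomenon recorded in Proposition~\ref{prop:ap} and Corollary~\ref{cor:eavg} gives a complementary averaging mechanism, since if $X_{G_0}, X_{G_1}, \dots$ form an arithmetic progression with $e$-positive endpoints, every intermediate term is $e$-positive; this is what powers the new family in Proposition~\ref{prop:typeII}.

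Second, for the base cases that resist such reductions, I would lift the problem into $\NCSym$ via Gebhard and Sagan's $Y_G$ and work in the quotient $\UBCSym$ with $\y_{G\at v}$. Proving that $G$ is $(e)$-positive at a carefully chosen vertex $v$ (Definition~\ref{def:(e)pos}) is strictly stronger than $e$-positivity of $G$, and the induction operation $\duct$ together with the concatenation $G+H$ of labelled graphs furnishes recursive tools — of the form appearing in Theorems~\ref{the:G+Hr} and~\ref{the:G-v}, and in the new families of Section~\ref{sec:new(e)} — for propagating $(e)$-positivity from known graphs to larger ones. Combining this with the modular-law moves of the first step would be the mechanism by which one hopes to sweep out all unit interval graphs.

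The main obstacle — and the reason the conjecture remains open — is that neither mechanism is presently known to reach \emph{every} unit interval graph. The modular law relates $X_G$ to $X_{G'}$ for graphs differing by a controlled triple of edges, but iterating these relations is not known to terminate at a family whose $e$-positivity is independently established; and the $(e)$-positivity route, while effective on natural labellings of well-structured graphs such as paths and triangular ladders, genuinely fails for some labelled unit interval graphs, so it cannot serve as a universal certificate. A full proof would require either a uniform reduction scheme terminating in an understood base class, or a new positivity certificate interpolating between $e$-positivity and $(e)$-positivity that holds for all unit interval graphs; constructing either is exactly the content of the conjecture, and the present paper contributes new cases rather than its complete resolution.
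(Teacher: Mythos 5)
You are right that this statement is Guay-Paquet's reformulation of the Stanley--Stembridge conjecture and is stated in the paper as Conjecture~\ref{conj:StanStem} without proof; the paper, like your proposal, only establishes new special cases via the arithmetic-progression relations of Proposition~\ref{prop:ap} and Corollary~\ref{cor:eavg} and via $(e)$-positivity in $\UBCSym$. Your outline of that partial-progress strategy, and of why neither mechanism currently reaches all labelled unit interval graphs, accurately reflects the paper's treatment.
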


Gebhard and Sagan defined a noncommutative analogue of the chromatic symmetric function in $\NCSym$, which they used to resolve cases of Conjecture~\ref{conj:StanStem}.

\begin{definition}\cite[Definition 3.1]{GebSag}
Let $G$ be a labelled graph on $d$ vertices. Then the \textit{chromatic symmetric function in noncommuting variables} of $G$ is defined to be
$$Y_G=\sum_\kappa x_{\kappa(1)}\cdots x_{\kappa(d)},$$
where the sum is over all proper colourings $\kappa$ of $G$.
\end{definition}

Note we have $\rho(Y_G)=X_G$, by \cite[Proposition 3.5]{DvWNC} $Y_{G\mid H}=Y_{G}Y_H$, by \cite[Proposition 3.3]{GebSag} $Y_{\delta(G)}=\delta\circ Y_G$, and by \cite[Lemma 4.9]{DvWNC} $Y_{K_\pi}=e_\pi$.

Gebhard and Sagan showed in \cite[Proposition 3.5]{GebSag} that $Y_G$ satisfied a \textit{deletion-contraction relation}. Given a labelled graph $G$ on $d$ vertices and an edge $jd\in E(G)$, we define $G\setminus jd$ to be the the labelled graph obtained by removing the edge $jd$ from $G$, and $G/jd$ to be the labelled graph on $d-1$ vertices obtained from $G$ by formally identifying vertices $j$ and $d$ as the single vertex $j$ in $G/jd$. Dahlberg gave in \cite[Proposition 2.2]{Dladders} a slight generalization of Gebhard and Sagan's \cite[Proposition 3.5]{GebSag}, obtained by relabelling vertices.

\begin{proposition}\cite[Proposition 2.2]{Dladders}\label{prop:delcon}
If $G$ is a labelled graph on $d$ vertices with $jd\in E(G)$, then
$$Y_G=Y_{G\setminus jd}-Y_{G/jd}\duct_j^d.$$
\end{proposition}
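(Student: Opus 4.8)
The plan is to argue by the standard deletion--contraction dichotomy on proper colourings, directly at the level of the monomial expansions defining $Y_G$. First I would partition the proper colourings of $G\setminus jd$ according to whether or not they assign the same colour to $j$ and $d$. A map $\kappa\colon[d]\to\mathbb Z^+$ is a proper colouring of $G\setminus jd$ with $\kappa(j)\neq\kappa(d)$ precisely when it is a proper colouring of $G$, since $G$ and $G\setminus jd$ differ only in the edge $jd$. Hence, reading off the definition of $Y$,
\begin{equation*}
Y_{G\setminus jd}=Y_G+\sum_{\kappa}x_{\kappa(1)}\cdots x_{\kappa(d)},
\end{equation*}
where the remaining sum is over all proper colourings $\kappa$ of $G\setminus jd$ satisfying $\kappa(j)=\kappa(d)$.

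Next I would identify that remaining sum with $Y_{G/jd}\duct_j^d$. Recall $jd\in E(G)$ forces $j\le d-1$, so $\duct_j^d$ is defined on $\NCSym^{d-1}$ and appends the $j$th variable. I would set up the bijection between proper colourings $\kappa'$ of $G/jd$ (which has vertex set $[d-1]$, with the old vertices $j$ and $d$ fused into the single vertex $j$) and proper colourings $\kappa$ of $G\setminus jd$ with $\kappa(j)=\kappa(d)$, given by $\kappa(i)=\kappa'(i)$ for $i\in[d-1]$ and $\kappa(d)=\kappa'(j)$. The verification that this is a well-defined bijection is routine: an edge $id\in E(G\setminus jd)$ with $i\neq j$ corresponds to the edge $ij\in E(G/jd)$, so $\kappa(i)=\kappa'(i)\neq\kappa'(j)=\kappa(d)$ exactly when $\kappa'$ is proper there, and all other edges of $G\setminus jd$ are edges of $G/jd$ not involving the fused vertex; the inverse is clear. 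Under this bijection the associated monomials transform as $x_{\kappa(1)}\cdots x_{\kappa(d)}=x_{\kappa'(1)}\cdots x_{\kappa'(d-1)}x_{\kappa'(j)}$, which is exactly $(x_{\kappa'(1)}\cdots x_{\kappa'(d-1)})\duct_j^d$.

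Summing the last identity over all proper colourings $\kappa'$ of $G/jd$ gives $\sum_{\kappa\colon\kappa(j)=\kappa(d)}x_{\kappa(1)}\cdots x_{\kappa(d)}=Y_{G/jd}\duct_j^d$, and substituting into the first display yields $Y_{G\setminus jd}=Y_G+Y_{G/jd}\duct_j^d$, which rearranges to the claimed formula. I do not expect any genuine obstacle here; the only point demanding care is bookkeeping the vertex relabelling implicit in the contraction $G/jd$ and confirming that it is precisely the $j$th position (not the $d$th) whose variable gets appended, matching the operator $\duct_j^d$ rather than $\duct$.
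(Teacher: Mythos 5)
Your proof is correct: the paper itself gives no proof of this proposition, citing it from Dahlberg (which in turn generalizes Gebhard and Sagan's Proposition 3.5 by relabelling), and your argument --- splitting the proper colourings of $G\setminus jd$ according to whether $\kappa(j)=\kappa(d)$ and matching the equal-colour ones bijectively with proper colourings of $G/jd$ so that the monomials pick up the appended factor $x_{\kappa'(j)}$, i.e.\ the operator $\duct_j^d$ --- is exactly the standard deletion--contraction argument used in those sources. Your care in checking that edges $id$ of $G\setminus jd$ with $i\neq j$ correspond to edges $ij$ of $G/jd$ is precisely the bookkeeping needed, so there is no gap.
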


We end this section by defining $\NCQSym$, the \textit{algebra of quasisymmetric functions in noncommuting variables}, with bases indexed by set compositions. A \textit{set composition} $\Phi$ of $[d]$, written $\Phi\vDash[d]$, is an ordered list of blocks of some set partition $\widetilde\Phi \vdash[d]$, which we write as
$$\Phi = B_1 \sepsc \cdots \sepsc B_{\ell(\Phi)},$$
where $\ell(\Phi)$ is the \textit{length} of $\Phi$. We also define $\alpha(\Phi)$ to be the integer composition $(|B_1|,\dots,|B_{\ell(\Phi)}|)$.

The graded algebra $\NCQSym$ may be realized as a subalgebra of $\mathbb Q \langle \langle x_1,x_2,\dots\rangle\rangle$, where the variables $x_j$ do not commute, via
$$\NCQSym=\NCQSym^0\oplus\NCQSym^1\oplus\cdots,$$
where the $d$th graded piece $\NCQSym^d$ is spanned by the basis $\{M_\Phi\}_{\Phi\vDash[d]}$. The \textit{monomial quasisymmetric function} $M_\Phi$ is defined by
$$M_\Phi = \sum_{(i_1,\dots,i_d)}x_{i_1}\cdots x_{i_d},$$
summed over all tuples $(i_1,\dots,i_d)$ with $i_j=i_k$ if and only if $B_{\widetilde\Phi,j}=B_{\widetilde\Phi,k}$ and $i_j<i_k$ whenever $B_{\widetilde\Phi,j}$ appears before $B_{\widetilde\Phi,k}$ in $\Phi$. Note $\NCSym$ is a subalgebra of $\NCQSym$, via
$$m_\pi = \sum_{\widetilde\Phi = \pi} M_\Phi.$$

\section{Arithmetic progressions of graphs}\label{sec:arithprog}
Our first result describes relations between the chromatic symmetric functions of certain sequences of graphs, and has applications toward positivity.

\begin{proposition}\label{prop:ap}
Suppose $v_1,\dots,v_k$ are distinct vertices of a graph $G$ satisfying $N[v_1]=\dots=N[v_k]$. If $w$ is another vertex of $G$ not adjacent to any $v_j$, then the $(X_{G_j})_{j=0}^k$ form an arithmetic progression, where $G_0=G$ and $G_j = G_{j-1}+v_jw$ for $1\le j \le k$. In particular, if $G_0$ and $G_k$ are $b$-positive for some basis $\{b_\lambda\}_{\lambda\vdash n\ge 0}$ of $\Sym$, then so is every $G_j$ for $0\le j\le k$.
\end{proposition}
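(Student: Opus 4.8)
The plan is to exhibit the arithmetic progression directly at the level of chromatic symmetric functions, using a bijective/colour-counting argument, and then deduce the positivity consequence by a convexity observation. First I would fix the hypotheses: since $N[v_1]=\cdots=N[v_k]$, the vertices $v_1,\dots,v_k$ are pairwise adjacent and share exactly the same set of neighbours outside $\{v_1,\dots,v_k\}$; in particular, in any proper colouring $\kappa$ of $G$ the colours $\kappa(v_1),\dots,\kappa(v_k)$ are pairwise distinct and all differ from the colours of their common neighbours. The added edges $v_jw$ in passing from $G_{j-1}$ to $G_j$ only ever constrain $w$ against $v_j$. So a proper colouring of $G_j$ is precisely a proper colouring $\kappa$ of $G=G_0$ with the extra condition $\kappa(w)\notin\{\kappa(v_1),\dots,\kappa(v_j)\}$.

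Next I would compute, for a fixed proper colouring $\kappa$ of $G$, how many of the graphs $G_0,\dots,G_k$ it remains proper for. Write $c=\kappa(w)$. Because the $\kappa(v_i)$ are pairwise distinct, the set $\{i\in[k]:\kappa(v_i)=c\}$ has size $0$ or $1$. If it is empty, $\kappa$ is proper for all of $G_0,\dots,G_k$; if it equals $\{i_0\}$, then $\kappa$ is proper for $G_0,\dots,G_{i_0-1}$ and improper for $G_{i_0},\dots,G_k$. The key symmetry is that, since $N[v_1]=\cdots=N[v_k]$ and $w$ is adjacent to none of the $v_j$ in $G$, transposing the colours on $v_i$ and $v_{i'}$ (for any $i,i'$) sends proper colourings of $G$ to proper colourings of $G$ and fixes the monomial $x_{\kappa(1)}\cdots x_{\kappa(d)}$. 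Hence, for each $i_0\in[k]$, the generating function $\sum_\kappa x_{\kappa(1)}\cdots x_{\kappa(d)}$ over proper colourings $\kappa$ of $G$ with $\kappa(v_{i_0})=\kappa(w)$ is independent of $i_0$; call it $D$, and let $A$ be the analogous sum over $\kappa$ with $\kappa(w)\notin\{\kappa(v_1),\dots,\kappa(v_k)\}$. Then counting which $G_j$ a colouring survives to gives
\begin{equation*}
X_{G_j}=A+(k-j)D,\qquad 0\le j\le k,
\end{equation*}
which is an arithmetic progression with common difference $-D$. (Equivalently, $X_{G_{j-1}}-X_{G_j}=D$ for all $j$, which one can also read off from $X_{G_{j-1}}-X_{G_j}=X_{(G_{j-1}/v_jw)}$-type deletion-contraction reasoning together with the symmetry; the direct count above is cleanest.)

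For the positivity statement, fix a basis $\{b_\lambda\}$ of $\Sym$ and expand $X_{G_j}=\sum_\lambda c_{\lambda,j}b_\lambda$. From $X_{G_j}=A+(k-j)D$ we get that each coefficient $c_{\lambda,j}$ is an affine (hence degree $\le 1$) function of $j$, so $c_{\lambda,j}=\tfrac{k-j}{k}c_{\lambda,0}+\tfrac{j}{k}c_{\lambda,k}$ is a convex combination of $c_{\lambda,0}$ and $c_{\lambda,k}$ for $0\le j\le k$. If $G_0$ and $G_k$ are $b$-positive, then $c_{\lambda,0}\ge 0$ and $c_{\lambda,k}\ge 0$ for all $\lambda$, whence $c_{\lambda,j}\ge 0$ for all $\lambda$ and all $j$, i.e. each $G_j$ is $b$-positive. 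I expect the main obstacle to be the bookkeeping in the second paragraph: making the symmetry argument airtight — precisely, that swapping colours among the $v_i$ is a monomial-preserving involution on proper colourings of $G$ that interchanges the relevant colour classes — and thereby justifying that the ``defect'' contribution $D$ is genuinely the same for each index $i_0$. Everything after that (the arithmetic-progression identity and the convexity deduction) is routine.
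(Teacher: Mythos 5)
Your argument is correct, but it takes a genuinely different route from the paper. You prove the identity $X_{G_j}=A+(k-j)D$ directly, by classifying each proper colouring $\kappa$ of $G$ according to whether $\kappa(w)$ agrees with some (necessarily unique, since the $v_i$ are pairwise adjacent) $\kappa(v_{i_0})$, and using the colour-swapping involution on $v_i,v_{i'}$ -- valid precisely because $N[v_i]=N[v_{i'}]$, and monomial-preserving because the variables commute -- to see that the ``defect'' contribution $D_{i_0}$ is independent of $i_0$. The paper instead argues by induction on $k$: it applies Orellana and Scott's triple-deletion relation to the triangle $v_1,v_2,w$ in $G+v_1w+v_2w$, uses the automorphism exchanging $v_1$ and $v_2$ to identify $X_{G+v_1w}$ with $X_{G+v_2w}$, and then extends the arithmetic progression supplied by the inductive hypothesis; the convexity step for $b$-positivity is identical in both treatments. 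Your approach is more self-contained (no citation of triple-deletion) and yields an explicit, combinatorially meaningful common difference $-D$; the paper's approach is shorter given the cited result and, more importantly, is the template that transfers to the noncommutative setting (Proposition~\ref{prop:(e)avg}), where your colour-swap would not fix monomials in noncommuting variables and the argument instead runs through noncommutative triple-deletion and the projection $\projUBC$. One small caveat: your parenthetical aside that the common difference can be read off from ``$X_{(G_{j-1}/v_jw)}$-type deletion--contraction'' is only heuristic, since $X_G$ does not satisfy a naive deletion--contraction relation (the contracted vertex would have to carry a squared variable); but your main count does not rely on it, so this does not affect the proof.
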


\begin{proof}
We will show that the statement of the first part of the proposition holds for any number of vertices $k$ by applying induction on $k$.

The base case $k=1$ is immediate, because $(X_{G_j})_{j=0}^1$ will always be an arithmetic progression, being a sequence of length $2$. 

When $k>1$, the vertices $v_1,v_2,w$ are mutually adjacent in $G+v_1w+v_2w$ (since $\{v_1,v_2\}\subseteq N[v_1]=N[v_2]$ implies that $v_1$ and $v_2$ are joined by an edge in $G$). Then by triple-deletion \cite[Theorem 3.1]{OS}, $$X_{G+v_1w+v_2w}=X_{G+v_1w}+X_{G+v_{2}w}-X_{G}.$$

Since $N[v_1]=N[v_2]$, we obtain a graph automorphism of $G$ by exchanging $v_1$ and $v_2$, and so $G+v_1w$ and $G+v_2w$ are isomorphic. Our equation thus becomes
$$X_{G+v_1w+v_2w}=2X_{G+v_1w}-X_{G},$$
or equivalently,
$$X_{G_2}-X_{G_1}=X_{G_1}-X_{G_0}.$$

We next note that the graph $G_1$ satisfies the hypotheses of the proposition with vertex $w$ not adjacent to the $k-1$ vertices $v_2,\dots,v_k$. By the inductive hypothesis, the $(X_{G_j})_{j=1}^k$ form an arithmetic progression. Then, since $X_{G_2}-X_{G_1}=X_{G_1}-X_{G_0}$ is its common difference, we can extend it to obtain the arithmetic progression $(X_{G_j})_{j=0}^k$.

The second part of the proposition then follows because if $X_{G_0}$ and $X_{G_k}$ are $b$-positive, then $$X_{G_j}=\frac{k-j}{k}X_{G_0}+\frac{j}{k}X_{G_k}$$
must also be $b$-positive for any $0\le j \le k$.
\end{proof}

\begin{example}
Let $G=G_0$ be the left graph in Figure~\ref{fig:schur} below. Then $G$, together with vertices $v_1,v_2,v_3$ and $w$, as labelled, satisfies the hypotheses of Proposition~\ref{prop:ap}. Let $G_1$ and $G_3$ denote $G+v_1w$ and $G+v_1w+v_2w+v_3w$, respectively.
\begin{figure}[H]
\caption{}
\label{fig:schur}
\begin{tikzpicture}
\coordinate (A) at (0,0);
\coordinate (B) at (1,0);
\coordinate (C) at (1.5,.866);
\coordinate (D) at (1,1.732);
\coordinate (E) at (0,1.732);
\coordinate (F) at (-0.5,.866);
\coordinate (G) at (-0.5,1.866);
\coordinate (H) at (0.866,2.232);
\coordinate (I) at (0.866,-.5);
\draw[black] (A)--(B)--(C)--(D)--(E)--(F)--(A)--(D);
\draw[black] (E)--(A)--(C)--(E)--(B)--(D)--(F)--(B);
\draw[black] (C)--(F)--(G);
\draw[black] (I)--(A);
\draw[black] (E)--(H);
\filldraw[black] (A) circle [radius=2pt];
\filldraw[black] (B) circle [radius=2pt] node[right] {\scriptsize$v_1$};
\filldraw[black] (C) circle [radius=2pt] node[right] {\scriptsize$v_2$};
\filldraw[black] (E) circle [radius=2pt];
\filldraw[black] (G) circle [radius=2pt];
\filldraw[black] (D) circle [radius=2pt] node[right] {\scriptsize$v_3$};
\filldraw[black] (F) circle [radius=2pt];
\filldraw[black] (H) circle [radius=2pt];
\filldraw[black] (I) circle [radius=2pt] node[below] {\scriptsize$w$};
\node [] at (0.5,-1) {$G_0$};

\coordinate (A) at (4,0);
\coordinate (B) at (5,0);
\coordinate (C) at (5.5,.866);
\coordinate (D) at (5,1.732);
\coordinate (E) at (4,1.732);
\coordinate (F) at (3.5,.866);
\coordinate (G) at (3.5,1.866);
\coordinate (H) at (4.866,2.232);
\coordinate (I) at (4.866,-.5);
\draw[black] (A)--(B)--(C)--(D)--(E)--(F)--(A)--(D);
\draw[black] (E)--(A)--(C)--(E)--(B)--(D)--(F)--(B);
\draw[black] (C)--(F)--(G);
\draw[black] (I)--(A);
\draw[black] (E)--(H);
\draw[black] (I)--(B);
\filldraw[black] (A) circle [radius=2pt];
\filldraw[black] (B) circle [radius=2pt];
\filldraw[black] (C) circle [radius=2pt];
\filldraw[black] (E) circle [radius=2pt];
\filldraw[black] (G) circle [radius=2pt];
\filldraw[black] (D) circle [radius=2pt];
\filldraw[black] (F) circle [radius=2pt];
\filldraw[black] (H) circle [radius=2pt];
\filldraw[black] (I) circle [radius=2pt];
\node [] at (4.5,-1) {$G_1$};

\coordinate (A) at (8,0);
\coordinate (B) at (9,0);
\coordinate (C) at (9.5,.866);
\coordinate (D) at (9,1.732);
\coordinate (E) at (8,1.732);
\coordinate (F) at (7.5,.866);
\coordinate (G) at (7.5,1.866);
\coordinate (H) at (8.866,2.232);
\coordinate (I) at (8.866,-.5);
\draw[black] (A)--(B)--(C)--(D)--(E)--(F)--(A)--(D);
\draw[black] (E)--(A)--(C)--(E)--(B)--(D)--(F)--(B);
\draw[black] (C)--(F)--(G);
\draw[black] (I)--(A);
\draw[black] (E)--(H);
\draw[black] (I)--(B);
\draw[black] (I)--(C);
\draw[black] (I)--(D);
\filldraw[black] (A) circle [radius=2pt];
\filldraw[black] (B) circle [radius=2pt];
\filldraw[black] (C) circle [radius=2pt] ;
\filldraw[black] (E) circle [radius=2pt];
\filldraw[black] (G) circle [radius=2pt];
\filldraw[black] (D) circle [radius=2pt] ;
\filldraw[black] (F) circle [radius=2pt];
\filldraw[black] (H) circle [radius=2pt];
\filldraw[black] (I) circle [radius=2pt];
\node [] at (8.5,-1) {$G_3$};
\end{tikzpicture}
\end{figure}

Then we have, in the basis of Schur functions (see \cite[Section 1.3]{Mac} for an introduction), 
\begin{align*}
X_{G_0}&=5760s_{(1^9)} + 7200s_{(2,1^7)} + 3168s_{(2^2,1^5)} + 468s_{(2^3,1^3)} + 2880s_{(3,1^6)} + 864s_{(3,2,1^4)} + 360s_{(4,1^5)},\\
X_{G_3}&=14400s_{(1^9)} + 12960s_{(2,1^7)}+ 3888s_{(2^2,1^5)} + 288s_{(2^3,1^3)} + 2880s_{(3,1^6)} + 432s_{(3,2,1^4)},
\end{align*} which are both Schur-positive.

By Proposition~\ref{prop:ap}, the graph $G_1$ is also Schur-positive, since
\begin{align*}
    X_{G_1}&=\frac{2}{3}X_{G_0}+\frac{1}{3}X_{G_3}\\
    &=8640s_{(1^9)} + 9120s_{(2,1^7)}+ 3408s_{(2^2,1^5)} + 408s_{(2^3,1^3)} + 2880s_{(3,1^6)} + 720s_{(3,2,1^4)}+240s_{(4,1^5)}.
\end{align*}
\end{example}

We can specialise the previous proposition and obtain a tool to help prove the $e$-positivity of labelled unit interval graphs, making progress toward Conjecture~\ref{conj:StanStem}.

\begin{corollary}\label{cor:eavg}
Suppose $G$ is a labelled unit interval graph. Then the following hold.
\begin{enumerate}
    \item[(a)] If $i<|G|$ is a vertex such that $m_{i}+1\le m_{i+1}$ and $(w_{m_i+1},m_{m_{i}+1})=\cdots=(w_{m_i+k},m_{m_{i}+k})$, then the $(X_{G_j})_{j=0}^k$ form an arithmetic progression, where $G_j= G + \{ib\mid m_i+1\le b \le m_i+j\}$ for $0\le j \le k$. In particular, if $G_0$ and $G_k$ are $e$-positive, then so is every $G_j$ for $0\le j \le k$.
    \item[(b)] Alternatively, if $i>1$ is a vertex such that $w_i-1\ge w_{i-1}$ and $(w_{w_i-1},m_{w_i-1})=\cdots=(w_{w_i-k},m_{w_i-k})$, then the $(X_{G_j})_{j=0}^k$ form an arithmetic progression, where $G_j = G + \{bi\mid w_i-1\ge b\ge w_i-j\}$ for $0\le j \le k$. In particular, if $G_0$ and $G_k$ are $e$-positive, then so is every $G_j$ for $0\le j \le k$.
\end{enumerate}

\end{corollary}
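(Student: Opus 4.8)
The plan is to obtain both parts as essentially immediate consequences of Proposition~\ref{prop:ap}; the only real content is translating between the sequences $(m_i),(w_i)$ and the closed neighbourhoods $N[\cdot]$ that appear there, and then handling part (b) via the reverse graph.

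For part (a), I would set $w:=i$ and $v_j:=m_i+j$ for $1\le j\le k$. These are pairwise distinct vertices of $G$: the hypothesis $(w_{m_i+1},m_{m_i+1})=\cdots=(w_{m_i+k},m_{m_i+k})$ only makes sense when $m_i+k\le|G|$, and $i\le m_i<m_i+1<\cdots<m_i+k$. Since the closed neighbourhood of a vertex $a$ of a labelled unit interval graph is $N[a]=\{w_a,\dots,m_a\}$, the displayed equalities say exactly that $N[v_1]=\cdots=N[v_k]$, and $w=i$ is not adjacent to any $v_j$ because $v_j=m_i+j>m_i=\max N[i]$. Proposition~\ref{prop:ap} therefore applies and shows that the graphs $G'_0=G$, $G'_j=G'_{j-1}+v_jw$ satisfy that $(X_{G'_j})_{j=0}^k$ is an arithmetic progression; unwinding the recursion gives $G'_j=G+\{ib:m_i+1\le b\le m_i+j\}=G_j$, which is the claim, and the ``in particular'' clause is the case $b=e$ of the second half of Proposition~\ref{prop:ap}.

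For part (b), I would pass to the reverse graph $G^r=\delta(G)$, where $\delta\in\Sg_{|G|}$ exchanges $a$ with $|G|+1-a$. Then $G^r$ is again a labelled unit interval graph, and computing closed neighbourhoods shows that its sequences satisfy $m^r_a=|G|+1-w_{|G|+1-a}$ and $w^r_a=|G|+1-m_{|G|+1-a}$; under this dictionary the hypotheses imposed on the vertex $i$ of $G$ in (b) turn into the hypotheses of (a) imposed on the vertex $|G|+1-i$ of $G^r$, and the edges $\{bi:w_i-1\ge b\ge w_i-j\}$ added to $G$ correspond under $\delta$ to the edges added to $G^r$ in (a). Since $X_H$ depends only on the isomorphism type of $H$ — indeed $X_{\delta(H)}=\rho(Y_{\delta(H)})=\rho(\delta\circ Y_H)=\rho(Y_H)=X_H$ — the sequence $(X_{G_j})_{j=0}^k$ coincides term-by-term with the arithmetic progression produced by part (a) applied to $G^r$, which settles (b). Equivalently, one can rerun the argument of (a) verbatim with $v_j:=w_i-j$ and $w:=i$, now using $v_j<w_i=\min N[i]$ to see that $i$ is not adjacent to any $v_j$.

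I would also include the remark — not needed for the statement itself, but explaining why these are the useful hypotheses — that $m_i+1\le m_{i+1}$ (resp.\ $w_i-1\ge w_{i-1}$), together with the neighbourhood equalities, forces $m_{i+1}\ge m_i+k$ (resp.\ $w_{i-1}\le w_i-k$), so that the $m$- and $w$-sequences of each $G_j$ remain weakly increasing and hence each $G_j$ is again a labelled unit interval graph, which is what makes the corollary usable toward Conjecture~\ref{conj:StanStem}. I do not anticipate a genuine obstacle: the core of the proof is a single application of Proposition~\ref{prop:ap}, and the only places calling for a little care are the reversal dictionary in part (b) and the short monotonicity check in the remark.
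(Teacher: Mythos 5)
Your proposal is correct and follows exactly the paper's route: part (a) is a direct application of Proposition~\ref{prop:ap} with $w=i$ and $v_j=m_i+j$ (the equal pairs $(w_a,m_a)$ giving $N[v_1]=\cdots=N[v_k]$ and $v_j>m_i$ giving non-adjacency to $i$), and part (b) is obtained by applying part (a) to the reverse graph $G^r$. Your extra details — the reversal dictionary $m^r_a=|G|+1-w_{|G|+1-a}$, $w^r_a=|G|+1-m_{|G|+1-a}$ and the monotonicity remark showing each $G_j$ stays a labelled unit interval graph — are accurate fillings-in of what the paper leaves implicit (the latter being its subsequent Observation).
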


\begin{proof}
Part (a) follows immediately from applying Proposition~\ref{prop:ap} to $G$ on vertices $i$ and $m_{i}+1,\dots, m_i+k$ in the $e$-basis. Part (b) follows from applying part (a) to the reverse graph $G^r$ of $G$.
\end{proof}

\begin{observation}
When the hypotheses of either part (a) or (b) of Corollary~\ref{cor:eavg} are satisfied, the labelled graphs $G_j$ for $0\le j \le k$ are labelled unit interval graphs.
\end{observation}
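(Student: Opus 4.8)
The plan is to reduce the whole statement to part~(a): exactly as Corollary~\ref{cor:eavg}(b) is deduced from Corollary~\ref{cor:eavg}(a), part~(b) of the observation follows by applying part~(a) to the reverse graph $G^r$. Here $G^r$ is again a labelled unit interval graph, the relabelling $s\mapsto |G|+1-s$ turns its $m$- and $w$-sequences into the $w$- and $m$-sequences of $G$ read backwards, and under this correspondence the hypotheses of part~(b) for $G$ at a vertex $i$ become precisely the hypotheses of part~(a) for $G^r$ at the vertex $|G|+1-i$, while each $G_j$ of part~(b) becomes the corresponding $G_j$ of part~(a). Since reversal is an involution preserving the class of labelled unit interval graphs, it suffices to handle part~(a). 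For that I will use the reformulation that a labelled graph $H$ on $[d]$ is a labelled unit interval graph if and only if $N_H[t]$ is a set of consecutive integers for every vertex $t$: one direction is the remark recorded in the excerpt that $N[t]=\{w_t,\dots,m_t\}$, and for the converse, if every $N_H[t]$ is an interval and $a\le v<w\le c$ with $ac\in E(H)$, then $w\in N_H[a]$ forces $a\in N_H[w]$, which in turn forces $v\in N_H[w]$, so $vw\in E(H)$.

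Granting this, fix $i$ and $k$ as in part~(a) and $0\le j\le k$, and write $G_j=G+\{ib\mid m_i+1\le b\le m_i+j\}$. Passing from $G$ to $G_j$ only joins $i$ to each of $m_i+1,\dots,m_i+j$, so the only closed neighbourhoods that change are those of $i$ and of the (pairwise distinct, since $i<m_i+1$) vertices $m_i+1,\dots,m_i+j$. Now $N_{G_j}[i]=\{w_i,\dots,m_i\}\cup\{m_i+1,\dots,m_i+j\}=\{w_i,\dots,m_i+j\}$ is still an interval, and for $t\in\{m_i+1,\dots,m_i+j\}$ we have $N_{G_j}[t]=\{w_t,\dots,m_t\}\cup\{i\}$; since $i<m_i+1\le t\le m_t$, this set is an interval if and only if $w_t\le i+1$. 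Thus the observation reduces to the single inequality $w_t\le i+1$ for all $t$ with $m_i+1\le t\le m_i+j$.

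This inequality is where the hypotheses of part~(a) are used, and verifying it is essentially the only content. The hypothesis $(w_{m_i+1},m_{m_i+1})=\cdots=(w_{m_i+k},m_{m_i+k})$ gives $w_t=w_{m_i+1}$ whenever $m_i+1\le t\le m_i+k$, hence for $t\in\{m_i+1,\dots,m_i+j\}$ since $j\le k$, so it is enough to bound $w_{m_i+1}$. Combining $m_i\ge i$, $w_{i+1}\le i+1$, and the remaining hypothesis $m_i+1\le m_{i+1}$ yields $w_{i+1}\le i+1\le m_i+1\le m_{i+1}$, so $m_i+1\in\{w_{i+1},\dots,m_{i+1}\}=N_G[i+1]$; hence $i+1\in N_G[m_i+1]=\{w_{m_i+1},\dots,m_{m_i+1}\}$, giving $w_{m_i+1}\le i+1$. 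This completes part~(a), and the reduction of the first paragraph then gives part~(b). I expect the only fiddly point to be that reduction — writing out precisely how the $m$- and $w$-sequences, the distinguished vertex, and the family $(G_j)$ transform under $s\mapsto|G|+1-s$ — but this is routine bookkeeping of the same sort already used in the proof of Corollary~\ref{cor:eavg}(b); the closed-neighbourhood reformulation and the inequality $w_{m_i+1}\le i+1$ are the steps carrying the actual weight.
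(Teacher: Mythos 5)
Your proof is correct and complete. The paper states this as an Observation with no proof at all, so there is nothing to compare it against; your argument --- the characterization of labelled unit interval graphs as those in which every closed neighbourhood $N[t]$ is an interval of integers, the verification that the only neighbourhoods changed in passing from $G$ to $G_j$ are those of $i$ (which becomes $\{w_i,\dots,m_i+j\}$) and of $t\in\{m_i+1,\dots,m_i+j\}$ (which stay intervals precisely because $w_t=w_{m_i+1}\le i+1$, deduced from $w_{i+1}\le i+1\le m_i+1\le m_{i+1}$ and symmetry of adjacency), and the reduction of part (b) to part (a) via the reverse graph --- is sound, and the reversal step is exactly the same device the paper itself uses to deduce Corollary~\ref{cor:eavg}(b) from (a).
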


\begin{remark}
Special cases of the relations in Corollary~\ref{cor:eavg} were studied by Dahlberg and van Willigenburg in \cite{DvWpop} to give a proof of the $e$-positivity of the \emph{lollipop graphs} $L_{m,n}$ for $m,n\ge1$ in \cite[Theorem 8]{DvWpop}, which are the labelled unit interval graphs associated with the sequence $(w_i)_{i=1}^{m+n}$ where
$$w_i=\begin{cases}
1 &\text{if $1\le i \le m$,}\\
i-1 &\text{if $m+1\le i\le m+n$.}
\end{cases}
$$

In \cite{HuhNamYoo}, Huh, Nam and Yoo further studied these relations for the chromatic quasisymmetric function of labelled unit interval graphs, and proved in \cite[Theorem 4.9]{HuhNamYoo} the $e$-positivity of \emph{melting lollipop graphs} $L_{m,n}^{(k)}$ for $m,n\ge 1$ and $0\le k\le m-1$, obtained by deleting the edges between vertex $m$ and vertices $1,\dots,k$ from $L_{m,n}$. (The definition given differs slightly from that of Huh, Nam and Yoo's in that these are actually the reverse graphs of what they call $L_{m,n}^{(k)}$. See the rightmost graph of Figure~\ref{fig:lollipop} for the example of $L_{5,2}^{(1)}$.) In fact, Corollary~\ref{cor:eavg} is equivalent to \cite[Theorem 3.4(b')]{HuhNamYoo} for $q=1$. The quasisymmetric case of these relations is also studied in \cite{AN} by Abreu and Nigro.

We will refer to the graphs $L_{m,n}^{(k)}$ as \textit{type I melting lollipop graphs} to distinguish them from a related family of graphs introduced in the next proposition, which we will prove are $e$-positive by an application of Corollary~\ref{cor:eavg}. Melting lollipop graphs are interesting because there exists an induction scheme from which their $e$-positivity can be deduced only from the $e$-positivity argument in Corollary \ref{cor:eavg} and the $e$-positivity of disjoint unions of complete graphs.
\end{remark}

\begin{proposition}\label{prop:typeII}
\emph{Type II melting lollipop graphs} $\Gamma_{m,n}^{(k)}$ for $m\ge 3$, $n\ge 1$ and $1\le k \le m-1$, obtained by deleting the edges between vertex $1$ and vertices $m,\dots,m-k+1$ from $L_{m,n}$, are $e$-positive.

\end{proposition}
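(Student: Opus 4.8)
The plan is to realize the graphs $\Gamma_{m,n}^{(1)},\Gamma_{m,n}^{(2)},\dots,\Gamma_{m,n}^{(m-1)}$, for fixed $m\ge 3$ and $n\ge 1$, as the consecutive terms of an arithmetic progression produced by Corollary~\ref{cor:eavg}(a), and then to verify that its two endpoints are $e$-positive. First I would record the structure of $\Gamma_{m,n}^{(k)}$: it is the labelled unit interval graph given by $w_j=1$ for $1\le j\le m-k$, $w_j=2$ for $m-k+1\le j\le m$, and $w_j=j-1$ for $m+1\le j\le m+n$; equivalently it is $K_m$ on $\{1,\dots,m\}$ with the edges $1m,1(m-1),\dots,1(m-k+1)$ deleted, together with the path on $m,m+1,\dots,m+n$ attached at $m$. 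The endpoints are then handled directly. When $k=m-1$ the vertex $1$ is isolated and the remaining vertices form a copy of $L_{m-1,n}$, so $X_{\Gamma_{m,n}^{(m-1)}}=e_1\,X_{L_{m-1,n}}$, which is $e$-positive because $L_{m-1,n}$ is, by \cite[Theorem 8]{DvWpop}. When $k=1$ the graph $\Gamma_{m,n}^{(1)}$ is just $L_{m,n}$ with the single edge $1m$ removed, namely the type I melting lollipop $L_{m,n}^{(1)}$, which is $e$-positive by \cite[Theorem 4.9]{HuhNamYoo} (or by the induction scheme for melting lollipops noted above).

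Next I would apply Corollary~\ref{cor:eavg}(a) to $G=\Gamma_{m,n}^{(m-1)}$ at the vertex $i=1$. Since vertex $1$ is isolated we have $m_1=1$, and $m_1+1=2\le m=m_2$; moreover each vertex $j$ with $2\le j\le m-1$ has $(w_j,m_j)=(2,m)$, whereas $(w_m,m_m)=(2,m+1)$ since vertex $m$ also meets the attached path. Hence the hypothesis holds with the parameter of the corollary equal to $m-2$, giving an arithmetic progression $(X_{G_j})_{j=0}^{m-2}$ where $G_j=\Gamma_{m,n}^{(m-1)}+\{1b:2\le b\le j+1\}$. Adding the edges $12,13,\dots,1(j+1)$ back makes vertex $1$ adjacent to exactly $\{2,\dots,j+1\}$, so $G_j=\Gamma_{m,n}^{(m-1-j)}$; in particular $G_0=\Gamma_{m,n}^{(m-1)}$ and $G_{m-2}=\Gamma_{m,n}^{(1)}$.

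Finally, both endpoints $G_0$ and $G_{m-2}$ are $e$-positive by the first step, so the ``in particular'' clause of Corollary~\ref{cor:eavg}(a) shows that $G_j$ is $e$-positive for every $0\le j\le m-2$; as $j$ ranges over this interval the index $m-1-j$ ranges over $1,\dots,m-1$, which is exactly the statement. (For $m=3$ the progression is degenerate, consisting only of the two endpoints $k=2$ and $k=1$, while for $m\ge 4$ the intermediate values $2\le k\le m-2$ come from interpolation.) The routine parts will be the identification $G_j=\Gamma_{m,n}^{(m-1-j)}$ and the verification of the $(w_j,m_j)$ values; the single point that requires care is that vertex $m$ breaks the common pattern, so the progression reaches $\Gamma_{m,n}^{(1)}$ rather than $L_{m,n}$ — which is why recognizing $\Gamma_{m,n}^{(1)}$ as a type I melting lollipop is the crux of the argument.
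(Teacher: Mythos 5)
Your proposal is correct and is essentially the paper's own argument: you apply Corollary~\ref{cor:eavg}(a) to $\Gamma_{m,n}^{(m-1)}=K_1\mid L_{m-1,n}$ at vertex $1$ with vertices $2,\dots,m-1$, identify the endpoints of the resulting arithmetic progression as $K_1\mid L_{m-1,n}$ and $\Gamma_{m,n}^{(1)}=L_{m,n}^{(1)}$, and cite \cite[Theorem 8]{DvWpop} and \cite[Theorem 4.9]{HuhNamYoo} for their $e$-positivity, exactly as the paper does. The only difference is that you spell out the $(w_j,m_j)$ verification and the identification $G_j=\Gamma_{m,n}^{(m-1-j)}$, which the paper leaves implicit.
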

\begin{proof}
Apply Corollary~\ref{cor:eavg}(a) to the labelled unit interval graph $K_1\mid L_{m-1,n}$ on vertices $1$ and $2,\dots,m-1$. Figure~\ref{fig:lollipop} illustrates the case of $m=5$, $n=2$ and $k=2$.
\begin{figure}[H]
\caption{}
\label{fig:lollipop}
\begin{tikzpicture}
\coordinate (A) at (.75,0);
\coordinate (B) at (1.5,0);
\coordinate (C) at (2.25,0);
\coordinate (D) at (3,0);
\coordinate (E) at (3.75,0);
\coordinate (F) at (4.5,0);
\coordinate (G) at (5.25,0);
\draw[black] (B)--(C)--(D)--(E)--(F)--(G);

\draw[black] (3,0) arc (0:180:.75);
\draw[black] (3.75,0) arc (0:180:.75);
\draw[black] (3.75,0) arc (0:180:1.125);
\filldraw[black] (A) circle [radius=2pt] node[below] {\scriptsize$1$};
\filldraw[black] (B) circle [radius=2pt] node[below] {\scriptsize$2$};
\filldraw[black] (C) circle [radius=2pt];
\filldraw[black] (E) circle [radius=2pt];
\filldraw[black] (G) circle [radius=2pt];
\filldraw[black] (D) circle [radius=2pt] node[below] {\scriptsize $m-1$};
\filldraw[black] (F) circle [radius=2pt];
\node [] at (3,-.7) {$K_1\mid L_{m-1,n}$};

\coordinate (A) at (6.25,0);
\coordinate (B) at (7,0);
\coordinate (C) at (7.75,0);
\coordinate (D) at (8.5,0);
\coordinate (E) at (9.25,0);
\coordinate (F) at (10,0);
\coordinate (G) at (10.75,0);
\draw[black] (A)--(B)--(C)--(D)--(E)--(F)--(G);
\draw[black] (7.75,0) arc (0:180:.75);
\draw[black] (8.5,0) arc (0:180:.75);
\draw[black] (9.25,0) arc (0:180:.75);
\draw[black] (9.25,0) arc (0:180:1.125);
\filldraw[black] (A) circle [radius=2pt];
\filldraw[black] (B) circle [radius=2pt];
\filldraw[black] (C) circle [radius=2pt];
\filldraw[black] (E) circle [radius=2pt];
\filldraw[black] (G) circle [radius=2pt];
\filldraw[black] (D) circle [radius=2pt];
\filldraw[black] (F) circle [radius=2pt];
\node [] at (8.5,-.7) {$\Gamma_{m,n}^{(k)}$};

\coordinate (A) at (11.75,0);
\coordinate (B) at (12.5,0);
\coordinate (C) at (13.25,0);
\coordinate (D) at (14,0);
\coordinate (E) at (14.75,0);
\coordinate (F) at (15.5,0);
\coordinate (G) at (16.25,0);
\draw[black] (A)--(B)--(C)--(D)--(E)--(F)--(G);
\draw[black] (13.25,0) arc (0:180:.75);
\draw[black] (14,0) arc (0:180:.75);
\draw[black] (14,0) arc (0:180:1.125);
\draw[black] (14.75,0) arc (0:180:.75);
\draw[black] (14.75,0) arc (0:180:1.125);
\filldraw[black] (A) circle [radius=2pt];
\filldraw[black] (B) circle [radius=2pt];
\filldraw[black] (C) circle [radius=2pt];
\filldraw[black] (E) circle [radius=2pt];
\filldraw[black] (G) circle [radius=2pt];
\filldraw[black] (D) circle [radius=2pt];
\filldraw[black] (F) circle [radius=2pt];
\node [] at (14,-.7) {$L_{m,n}^{(1)}$};
\end{tikzpicture}
\end{figure}

The graph $K_1\mid L_{m-1,n}$ is $e$-positive because lollipop graphs are $e$-positive, e.g. by \cite[Theorem 8]{DvWpop}. The type I melting lollipop graph $L_{m,n}^{(1)}$ is $e$-positive by \cite[Theorem 4.9]{HuhNamYoo}. Therefore since
$$X_{\Gamma_{m,n}^{(k)}}=\frac{k-1}{m-2}X_{K_1}X_{L_{m-1,n}}+\frac{m-k-1}{m-2} X_{L_{m,n}^{(1)}},$$
the graph $\Gamma_{m,n}^{(k)}$ is $e$-positive.
\end{proof}

\section{$\UBCSym$ and graph concatenations}\label{sec:UBCSym}
Applying Corollary~\ref{cor:eavg} to deduce the $e$-positivity of certain labelled unit interval graphs requires the $e$-positivity of a pair of labelled unit interval graphs to be known ahead of time. It will be useful to review some of the known $e$-positive labelled unit interval graphs in the literature to find more graphs on which we can apply the technique from Corollary~\ref{cor:eavg}. Gebhard and Sagan in \cite[Corollary 7.7]{GebSag} showed the $e$-positivity of all labelled unit interval graphs obtained from concatenating a sequence of complete graphs, which they called \emph{$K_\alpha$-chains}. We will also review some of the ideas they used to prove $e$-positivity, stemming from an equivalence relation in $\NCSym$.

In \cite[Section 6]{GebSag}, Gebhard and Sagan noted that ``even for some of the simplest graphs, $Y_G$ is usually not $e$-positive.'' As an example, they gave $Y_{P_3}= \frac{1}{2}e_{12/3}-\frac{1}{2}e_{13/2}+\frac{1}{2}e_{1/23}+\frac{1}{2}e_{123}$. Dahlberg and van Willigenburg later showed in \cite[Theorem 4.14]{DvWNC} that $Y_G$ is $e$-positive if and only if $G=K_\pi$ for some set partition $\pi$.

However, after defining 
$$e_{\pi_1}\equiv_i e_{\pi_2}\hbox{\rm \quad if and only if \quad}\lambda(\pi_1)=\lambda(\pi_2)\text{ and }|B_{\pi_1,i}|=|B_{\pi_2,i}|,$$
and extending linearly, we then have
$$Y_{P_3}\equiv_3 \frac{1}{2}e_{12/3}+\frac{1}{2}e_{123},$$ where the right-hand side is $e$-positive. {This is because $\lambda(13/2)=(2,1)=\lambda(1/23)$ and the size of a block containing $3$ in each of $13/2$ and $1/23$ is $2$, hence the $-\frac{1}{2}e_{13/2}$ and $+\frac{1}{2}e_{1/23}$ in $Y_{P_3}$ cancel.} Gebhard and Sagan called this relation \textit{congruence modulo $i$}.

Gebhard and Sagan in \cite{GebSag} and, later, Dahlberg in \cite{Dladders} together found several families of labelled unit interval graphs $G$ that were congruent modulo $|G|$ to an $e$-positive function in $\NCSym$. We will say then that a labelled graph $G$ is \emph{$(e)$-positive} to mean that $Y_G$ is congruent modulo $|G|$ to an $e$-positive function, e.g. the labelled graph $P_3$ is $(e)$-positive. We next give an equivalent formulation of these ideas in terms of a new quotient algebra of $\NCSym$.

For $d\in\mathbb Z_{>0}$ and any set partition $\pi\vdash [d]$, write $\type(\pi)$ to mean the pair $(\lambda,b)$, where $b=|B_{\pi,d}|$ and $\lambda$ is the partition whose parts are the sizes of the other parts of $\pi$, e.g. $${\type(1/24/35)=((2,1),2).}$$ When $\pi$ is the empty set partition, write $\type(\pi)=\mt$.

Recall that one basis of $\NCSym$ consists of the $e_\pi$ over all set partitions $\pi$. We can define $\UBCSym$ first as the free vector space spanned by elements $e_{\type(\pi)}$ over all set partitions $\pi$. Then $\UBCSym$ is naturally a quotient vector space of $\NCSym$ via the linear projection map
\begin{align*}
{\projUBC}:\NCSym &\rightarrow \UBCSym\\
e_\pi &\mapsto e_{\type(\pi)}.
\end{align*}

The kernel of $\projUBC$ is given by
$$\ker\projUBC = \spam\{e_{\pi_1}-e_{\pi_2}\mid \type(\pi_1)=\type(\pi_2)\}.$$
If set partitions $\pi_1,\pi_2$ satisfy $\type(\pi_1)=\type(\pi_2)$, then for any other set partition $\sigma$, we still have $\type(\pi_1\slashp \sigma)=\type(\pi_2\slashp \sigma)$ and $\type(\sigma\slashp\pi_1)=\type(\sigma\slashp\pi_2)$, so the kernel of $\projUBC$ is in fact a two-sided ideal of $\NCSym$, via the equalities
$$\projUBC((e_{\pi_1}-e_{\pi_2})e_\sigma) =\projUBC(e_\sigma(e_{\pi_1}-e_{\pi_2}))=0$$ and extending bilinearly.
Moreover, it is a graded ideal of $\NCSym$. This makes $\UBCSym$ a graded quotient algebra of $\NCSym$, with
$$\UBCSym = \NCSym/\spam\{e_{\pi_1}-e_{\pi_2}\mid \type(\pi_1)=\type(\pi_2)\}.$$ We will write $\UBCSym^d = \projUBC(\NCSym^d)$ to denote the homogeneous part of degree $d$ in $\UBCSym$. The kernel of $\projUBC$ is contained in the kernel of $\rho$, so the induced map $\projSym:\UBCSym\to\Sym$ is well-defined, and $\Sym$ is a quotient algebra of $\UBCSym$.

Note for $d\in\mathbb Z_{>0}$, for all $\pi\vdash [d]$ and $\delta\in \Sg_d$ fixing $d$ {we have} that $\projUBC(\delta\circ e_\pi)=\projUBC(e_{\delta(\pi)})=\projUBC(e_\pi)$. Extending linearly, for any $f\in\NCSym^d$ and $\delta\in \Sg_d$ fixing $d$, we also have $\projUBC(\delta\circ f)=\projUBC(f)$. This is the content of \cite[Lemma 6.6]{GebSag}. 

If $\type(\pi_1)=\type(\pi_2)$ for set partitions $\pi_1,\pi_2\vdash[d]$ with $d\in\mathbb Z_{>0}$, then there exists $\delta\in\Sg_d$ fixing $d$ such that $\pi_1=\delta(\pi_2)$, and so $\projUBC(p_{\pi_1})=\projUBC(p_{\pi_2})$. So we can define $p_{\type(\pi)}=\projUBC(p_\pi)$ for each set partition $\pi$. Since the $p_{\type(\pi)}$ over all $\pi\vdash [d]$ span $\UBCSym^d$, which has dimension equal to the number of distinct types of set partitions of $[d]$, it follows that the $p_{\type(\pi)}$ form another basis for $\UBCSym$. Similarly, we can define $m_{\type(\pi)}=\projUBC(m_\pi)$ for each set partition $\pi$, and they form a third basis for $\UBCSym$.

In \cite[Lemma 6.2]{GebSag}, Gebhard and Sagan state that if $f,g\in\NCSym$ are homogeneous of degree $d\in\mathbb Z_{>0}$ satisfying $\projUBC(f)=\projUBC(g)$, then $\projUBC(f\duct)=\projUBC(g\duct)$. We will define the linear operation \textit{induction}, $\duct$, on $\UBCSym^d$ to be the induced map sending $\projUBC(f)\mapsto \projUBC(f\duct)$ for every $f\in \NCSym^d$.

Our main object of study will be $\projUBC(Y_G)$ of a labelled graph $G$. Since for any labelled graph $G$ on $d$ vertices and $\delta\in \Sg_d$ fixing $d$ we have $$\projUBC(Y_{\delta(G)})=\projUBC(\delta\circ Y_G)=\projUBC(Y_G),$$
the value of $\projUBC(Y_G)$ depends only on the (unlabelled) graph $G$ and the choice of vertex labelled last.

\begin{definition}
Given a labelled graph $G$, we define
$$\y_G= \projUBC (Y_G).$$
If $G$ is a graph with a distinguished vertex $v$, the \emph{chromatic symmetric function of $G$ centred at $v$} is
$$\y_{G\at v}= \y_G,$$
where $G$ is given a labelling with $v$ as the last vertex.
\end{definition}

We will call an arbitrary function in $\UBCSym$ \emph{$(e)$-positive} if all coefficients are nonnegative in its expansion in the $e$-basis. Note that this is consistent with the notation of Gebhard and Sagan in that the following gives an equivalent definition of $(e)$-positivity of a labelled graph.

\begin{definition}\label{def:(e)pos}
A labelled graph $G$ is \emph{$(e)$-positive} if and only if $\y_G$ is $(e)$-positive. We also say that a graph $G$ is \emph{$(e)$-positive at a vertex $v$} if and only if $\y_{G\at v}$ is $(e)$-positive.
\end{definition}

As an example, we saw earlier that the labelled graph $P_3$ is $(e)$-positive, and
$$\y_{P_3}=\frac{1}{2}e_{((2),1)}+\frac{1}{2}e_{(\emptyset,3)}.$$ Note since
$$\projSym(e_{(\lambda,b)})=\lambda!b!e_{{\lambda\pcup(b)}},$$
any graph that is $(e)$-positive at some vertex is then necessarily also $e$-positive.

Gebhard and Sagan in \cite{GebSag} and Dahlberg in \cite{Dladders} found results showing for certain families of labelled graphs $H$, the concatenation $G+H$ is $(e)$-positive whenever $G$ is $(e)$-positive, motivating the following definition.

\begin{definition}
A labelled graph $H$ is \emph{appendable $(e)$-positive} if and only if $G+H$ is $(e)$-positive for all $(e)$-positive labelled graphs $G$.
\end{definition}

Appendable $(e)$-positive labelled graphs $H$ are necessarily $(e)$-positive, because $K_1$ is $(e)$-positive, and so $H=K_1+H$ must be $(e)$-positive by definition of appendable $(e)$-positivity.

We next briefly list the known $(e)$-positive and appendable $(e)$-positive labelled graphs from \cite{GebSag} and \cite{Dladders}. Results that follow by some combination of the listed propositions are omitted.

\begin{proposition}\label{prop:cycle}
\cite[Proposition 6.8]{GebSag}
Cycle graphs $C_n$ for $n\ge 3$ are $(e)$-positive.
\end{proposition}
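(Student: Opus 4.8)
The plan is to prove that $\y_{C_n}$ is $(e)$-positive for all $n \ge 3$ by induction on $n$, using the deletion-contraction relation of Proposition~\ref{prop:delcon} together with the known $(e)$-positivity of paths (or, more precisely, of concatenations of complete graphs, i.e. $K_\alpha$-chains, which are appendable $(e)$-positive by \cite[Corollary 7.7]{GebSag}). First I would label $C_n$ so that its vertices are $[n]$ with edges $\{i,i+1\}$ for $1 \le i \le n-1$ together with $\{1,n\}$; this is the natural labelling in which the last vertex $n$ is the ``apex'' of the chosen centre. Applying Proposition~\ref{prop:delcon} to the edge $1n \in E(C_n)$ yields
\begin{equation*}
Y_{C_n} = Y_{C_n \setminus 1n} - Y_{C_n/1n}\duct_1^n = Y_{P_n} - Y_{P_n/1n}\duct_1^n,
\end{equation*}
where $C_n \setminus 1n = P_n$ is the path and $C_n/1n$ is obtained from $P_n$ by identifying vertices $1$ and $n$, which is the cycle $C_{n-1}$ on vertices $[n-1]$ together with the fact that after contraction the relevant ``last'' vertex has changed. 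Projecting via $\projUBC$ and using that $\projUBC$ commutes with $\duct$ on each graded piece (as recorded after \cite[Lemma 6.2]{GebSag}), I would get a recursion expressing $\y_{C_n}$ in terms of $\y_{P_n}$ and the image of $\y_{C_{n-1}}$ (or a relabelling thereof) under an induction-type operator.

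The key steps, in order: (1) set up the labelling and write out the deletion-contraction identity for the edge $1n$; (2) identify $C_n \setminus 1n$ with $P_n$ and $C_n / 1n$ with a cycle on $n-1$ vertices (being careful about which vertex plays the role of ``vertex $n$'' after contraction, since $\projUBC$ only depends on the unlabelled graph plus the distinguished last vertex); (3) apply $\projUBC$, invoking the compatibility of $\projUBC$ with the action of permutations fixing the last vertex and with $\duct_j^d$, to convert the $\NCSym$ identity into a $\UBCSym$ identity; (4) use the base case — $\y_{C_3}$, which one can compute directly and check is $(e)$-positive, or note $C_3 = K_3 = K_\pi$ so $\y_{C_3} = \nu(e_\pi) = e_{(\emptyset,3)}$ — and the $(e)$-positivity of $\y_{P_n}$ (paths are concatenations of $K_2$'s, hence $K_\alpha$-chains, hence $(e)$-positive by \cite[Corollary 7.7]{GebSag}); (5) show that the induction operator applied to an $(e)$-positive element, when combined appropriately with the sign from deletion-contraction, still yields an $(e)$-positive element — this is where the ``unbalanced commutativity'' collapse in $\UBCSym$ must do the work, exactly as in the $P_3$ example where $-\tfrac12 e_{13/2}$ and $+\tfrac12 e_{1/23}$ cancel after projection.

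The main obstacle I anticipate is step (5): controlling the subtraction. A naive deletion-contraction gives $\y_{C_n}$ as a difference of two $(e)$-positive-looking pieces, and differences need not be $(e)$-positive, so I would need to either (a) find the ``right'' edge to delete and/or a clever rewriting so that the negative terms are absorbed by cancellation after applying $\projUBC$, or (b) reorganize the induction to prove a stronger statement about $\y_{C_n}$ in a more explicit form — for instance an explicit closed expression for $\y_{C_n}$ in the $e$-basis of $\UBCSym$ — from which $(e)$-positivity is manifest, and which also feeds the induction. Given that Gebhard and Sagan's proof is quoted as \cite[Proposition 6.8]{GebSag}, I expect the cleanest route is to compute $\y_{C_n}$ explicitly: one can likely show
\begin{equation*}
\y_{C_n} = \y_{P_n}\ \text{(suitably interpreted)} \ - \ \text{(a telescoping correction)},
\end{equation*}
and that the correction terms telescope down to a single manifestly $(e)$-positive expression; alternatively, decompose the colourings of $C_n$ directly by whether colours $\kappa(1)$ and $\kappa(n)$ coincide, which realizes $Y_{C_n}$ in terms of $Y_{P_n}$ and a ``wrap-around'' term that, after projecting to $\UBCSym$, becomes a nonnegative combination of $e_{(\lambda,b)}$'s. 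I would pursue this explicit-formula approach, since it both proves the result and gives data useful for the later sections on sink theorems.
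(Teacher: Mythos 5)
Your overall strategy (deletion--contraction plus induction on $n$) is the right one --- indeed the paper does not reprove this statement but imports it from \cite[Proposition 6.8]{GebSag}, whose proof runs along exactly these lines --- but as written the proposal has two genuine gaps. First, the projection step (3) fails for your chosen edge $1n$: the operator $\duct_1^n$ does \emph{not} descend to $\UBCSym$. Only induction at the \emph{last} position, $\duct$, is compatible with $\projUBC$ (this is precisely the content of the paper's quotation of \cite[Lemma 6.2]{GebSag}). Concretely, $e_{13/2}-e_{1/23}\in\ker\projUBC$, yet $(e_{13/2}-e_{1/23})\duct_1^4=m_{14/23}-m_{134/2}$, whose image under $\projUBC$ is $m_{((2),2)}-m_{((1),3)}\neq 0$; so $\projUBC\bigl(Y_{C_{n-1}}\duct_1^n\bigr)$ cannot be computed from $\y_{C_{n-1}}$ alone, and your recursion does not live in $\UBCSym$. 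The repair is to apply Proposition~\ref{prop:delcon} to the edge $(n-1)n$ instead: then $C_n\setminus(n-1)n$ is a path with $n$ as an endpoint (so its projection is $\y_{P_n}$), $C_n/(n-1)n=C_{n-1}$ with last vertex $n-1$, and the operator is $\duct_{n-1}^n=\duct$, giving the valid identity $\y_{C_n}=\y_{P_n}-\y_{C_{n-1}}\duct$ in $\UBCSym$.

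Second, and more seriously, step (5) --- the only place where $(e)$-positivity can actually be established --- is left as an acknowledged obstacle rather than proved. Knowing merely that $\y_{P_n}$ and $\y_{C_{n-1}}$ are $(e)$-positive is useless against the minus sign; one needs explicit expansions. The argument that works (and is Gebhard and Sagan's) is to carry an explicit $e$-basis formula for $\y_{C_{n-1}}$ through the induction, combine it with the explicit path expansion and with the induction-operator formula $e_{(\lambda,b)}\duct=\tfrac1b e_{(\lambda\cup(b),1)}-\tfrac1b e_{(\lambda,b+1)}$ (Lemma~\ref{lem:duct}), and verify coefficientwise that every negative term produced by $-\y_{C_{n-1}}\duct$ is cancelled by a corresponding term of $\y_{P_n}$, leaving a manifestly nonnegative expression that also feeds the next inductive step. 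Your base case $\y_{C_3}=e_{(\emptyset,3)}$ and the $(e)$-positivity of paths via $K_\alpha$-chains are fine, but until the cancellation computation is actually performed the induction does not close, so the proposal is a correct plan rather than a proof.
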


\begin{proposition}
\cite[Theorem 7.6]{GebSag}\label{prop:Kn}
Complete graphs $K_n$ for $n\ge 1$ are appendable $(e)$-positive.
\end{proposition}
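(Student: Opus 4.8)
The plan is to prove the stronger structural statement that for any $(e)$-positive labelled graph $G$ on $d$ vertices, the concatenation $G + K_n$ is $(e)$-positive, by working entirely inside $\UBCSym$ and exploiting the deletion-contraction recursion of Proposition~\ref{prop:delcon} together with the induction operator $\duct$. First I would record the base behaviour: the vertices $d, d+1, \dots, d+n-1$ of $G+K_n$ (where $d$ is the shared vertex) form a clique, so that $Y_{G+K_n}$ can be expanded via repeated deletion-contraction on the edges incident to the last vertex $d+n-1$. Concretely, deleting and contracting the edges from $d+n-1$ down to its clique-neighbours expresses $Y_{G+K_n}$ as an alternating sum of terms of the form $Y_{H}\duct_j^{|H|}\cdots$, where each $H$ is obtained from $G+K_{n-1}$ (on one fewer vertex) by identifying some clique vertices. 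The key point is that after applying $\projUBC$, all these contractions collapse: identifying two vertices of the appended clique, or identifying a clique vertex with the shared vertex $d$, produces set partitions whose \emph{type} (in the sense of $\type(\pi)=(\lambda,b)$) is controlled, because $d+n-1$ is adjacent to everything else in $K_n$ and so sits in a block whose size is dictated by how many clique vertices remain distinct.

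The main technical step is a lemma computing $\y_{G+K_n}$ in closed form in terms of $\y_{G}$ — I expect this to take the shape
\[
\y_{G+K_n} = \sum_{\ell=1}^{n} c_{n,\ell}\,\big(\,\y_{G}\ \text{shifted by appending a block of size }\ell\,\big),
\]
or more precisely an identity of the form $\y_{G+K_n} = \y_G \cdot (\text{something in } \UBCSym)$ using the product structure, since $\UBCSym$ is a quotient \emph{algebra} of $\NCSym$ and $Y_{G\mid K_n} = Y_G Y_{K_n}$. The cleanest route: by Proposition~\ref{prop:delcon} applied to the edge $d(d+n-1)$ in $G+K_n$ (noting $d$ and $d+n-1$ are adjacent in the clique), relate $Y_{G+K_n}$ to $Y_{(G+K_n)\setminus d(d+n-1)}$ and $Y_{(G+K_n)/d(d+n-1)}\duct_d^{\,\cdot}$; iterate over all $n-1$ clique-edges at $d+n-1$; then apply $\projUBC$ and use \cite[Lemma 6.2]{GebSag} (that $\duct$ descends to $\UBCSym$) plus the $\Sg_d$-invariance of $\projUBC$ under permutations fixing the last vertex to see that every surviving term is a nonnegative combination of products $\y_G \cdot e_{(\mu)}$ where each factor $e_{(\mu)}$ is manifestly $(e)$-positive (indeed $Y_{K_\pi}=e_\pi$, so the clique contributes honest $e$-basis elements). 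Since $\y_G$ is $(e)$-positive by hypothesis and the $e$-basis is multiplicative under the slash product ($e_\pi e_\sigma = e_{\pi\slashp\sigma}$, which descends to $\UBCSym$), the product of an $(e)$-positive element with $e_{(\mu)}$ is again $(e)$-positive, and a nonnegative combination of such remains so.

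The hard part will be verifying that the alternating signs from deletion-contraction genuinely cancel after projecting to $\UBCSym$ — that is, controlling which contracted graphs $H$ become identified under $\projUBC$ and checking the resulting coefficients $c_{n,\ell}$ are nonnegative. I would handle this by induction on $n$: the inductive hypothesis gives $\y_{G+K_{n-1}}$ in the desired form, and the single deletion-contraction step $\y_{G+K_n} = \y_{(G+K_n)\setminus d(d+n-1)} - \y_{(G+K_n)/d(d+n-1)}\duct$ together with the observation that $(G+K_n)\setminus d(d+n-1)$ still contains $K_{n-1}$ on the last $n-1$ vertices (so the hypothesis applies after further reduction) reduces everything to bookkeeping of block sizes. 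An alternative, possibly shorter, route is to cite or re-derive Gebhard and Sagan's explicit formula for $Y_{K_\alpha}$-chains from \cite[Section 7]{GebSag} and observe that concatenation with $K_n$ is exactly prepending $G$ to a $K_\alpha$-chain, so that their Corollary~7.7 computation of $\y$ for chains, combined with the product formula $Y_{G\mid H}=Y_GY_H$ and the identification of vertices in concatenation, yields the claim directly; in that case the only thing to check is that the "gluing" at the shared vertex $d$ does not destroy $(e)$-positivity, which again follows from the type computation since $d$'s block size only ever increases.
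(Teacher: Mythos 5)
You should first note that the paper does not prove this proposition at all: it is quoted from \cite[Theorem 7.6]{GebSag}, whose proof rests on the explicit $e$-expansions of $\y_{G+K_{n+1}}$ and $\y_{G+K_n\at|G|}\duct$ (\cite[Lemmas 7.3 and 7.5]{GebSag}, reproduced in this paper's proof of Proposition~\ref{prop:TP}). Your sketch points at the right machinery---deletion-contraction on the appended clique, induction on $n$, working in $\UBCSym$---but it never produces the formula that carries all of the content, and the structural claims you offer in its place are false. The function $\y_{G+K_n}$ is not $\y_G$ times an element of $\UBCSym$, nor a nonnegative combination of products $\y_G\cdot e_{(\mu,c)}$: already for $G=K_1$ and $n=2$ one has $\y_{K_1+K_2}=\y_{K_2}=e_{(\emptyset,2)}$, whereas every product of $\y_{K_1}=e_{(\emptyset,1)}$ with a degree-one element is a scalar multiple of $e_{((1),1)}$. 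Likewise an ansatz with coefficients $c_{n,\ell}$ depending only on $n$ and $\ell$ cannot hold: in the correct expansion the coefficients depend on $b$, the size of the block of the centre vertex in each $e_{(\lambda,b)}$ occurring in $\y_G$, because concatenation merges the appended clique with that block in all possible ways.

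The step you defer as ``the hard part''---that the alternating signs from deletion-contraction cancel after applying $\projUBC$---is precisely the theorem. In the actual expansion of $\y_{G+K_{n+1}}$ the terms $(b-n+i)\tfrac{(n-1)!(b-1)!}{(n-i-1)!(b+i)!}\,e_{(\lambda\pcup(b+i),\,n-i)}$ are genuinely negative whenever $i<n-b$, and $(e)$-positivity holds only because each such term cancels exactly against the term indexed by $n-b-i-1$ in the companion family; this is the same pairing this paper carries out for $G+TP_{n+1}$ in the proof of Proposition~\ref{prop:TP}. Nothing in your outline performs or replaces that computation: ``the clique contributes honest $e$-basis elements'' and ``$d$'s block size only ever increases'' are not mechanisms, and the latter misdescribes how contraction interacts with $\type$; moreover the contractions produce inductions $\duct_j$ at vertices that are not last, which are not handled by \cite[Lemma 6.2]{GebSag} or by permutations fixing the last vertex alone. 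Your alternative route through \cite[Corollary 7.7]{GebSag} also does not prove the statement, since that result concerns $K_\alpha$-chains rather than an arbitrary $(e)$-positive $G$ prepended to one; it could be repaired inside this paper's framework by using Theorem~\ref{the:linear} and Proposition~\ref{prop:Kd+H} to reduce to $G=K_d$, but one then still needs an explicit $(e)$-positive expansion of $\y_{K_d+K_n}$, i.e.\ essentially \cite[Lemma 7.3]{GebSag} again, so the gap is not avoided.
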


\begin{proposition}
\cite[Theorem 5.3]{Dladders}\label{prop:TL}
\emph{Triangular ladder graphs} $TL_n$ for $n\ge1$, given by the sequence $(m_i)_{i=1}^n$ where each $m_i=\min\{i+2,n\}$, are appendable $(e)$-positive.
\end{proposition}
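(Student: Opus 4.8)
The plan is to prove that triangular ladder graphs $TL_n$ are appendable $(e)$-positive by induction on $n$, using the deletion-contraction relation of Proposition~\ref{prop:delcon} to peel off the last vertex and expressing the result in terms of smaller triangular ladders and complete graphs. Let $G$ be any $(e)$-positive labelled graph on vertices $[k]$, so that $G+TL_n$ has vertex set $[k+n-1]$ with the copy of $TL_n$ occupying the last $n$ vertices. For $n\le 2$ we have $TL_1 = K_1$ and $TL_2 = K_2$, which are appendable $(e)$-positive by Proposition~\ref{prop:Kn}, giving the base case. For $n\ge 3$, the last vertex $d = k+n-1$ of $G+TL_n$ is adjacent precisely to $d-1$ and $d-2$ (within the triangular ladder).

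First I would apply Proposition~\ref{prop:delcon} with the edge $(d-2)d$, obtaining
$$Y_{G+TL_n} = Y_{(G+TL_n)\setminus (d-2)d} - Y_{(G+TL_n)/(d-2)d}\duct_{d-2}^d.$$
The graph $(G+TL_n)\setminus (d-2)d$ is $G+TL_{n-1}+K_2$ (the last vertex now hangs off only $d-1$), and the contraction $(G+TL_n)/(d-2)d$ identifies $d$ with $d-2$, producing $G+TL_{n-1}$ with the identified vertex $d-2$ last; moreover the induction marker $\duct_{d-2}^d$ corresponds, after tracking labels, to the ordinary induction $\duct$ on the last vertex. Hence, projecting to $\UBCSym$ via $\projUBC$ and using that $\projUBC$ intertwines the two notions of induction (this is the content of Gebhard--Sagan's \cite[Lemma 6.2]{GebSag}, recalled in the excerpt), we get a relation of the shape
$$\y_{G+TL_n} = \y_{(G+TL_{n-1})+K_2} - \y_{G+TL_{n-1}}\duct.$$
Now $G+TL_{n-1}$ is $(e)$-positive by the inductive hypothesis (since $TL_{n-1}$ is appendable $(e)$-positive and $G$ is $(e)$-positive), and appending $K_2$ keeps it $(e)$-positive by Proposition~\ref{prop:Kn}. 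So it remains to understand the effect of $\duct$ on an $(e)$-positive element and to show the difference above is still $(e)$-positive.

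The crux is therefore a lemma computing $e_{(\lambda,b)}\duct$ in $\UBCSym$ and showing that $e_{(\lambda,b-1)} \cdot e_1 - e_{(\lambda,b)}\duct$ (or the analogous combination that actually appears, once the $K_2$-append is expanded in the $e$-basis) is $(e)$-positive. Concretely, $K_2 = K_{\{1\}/\{2\}}$-type appends multiply by $e_{12}$-flavoured factors, and one needs the identity expressing $\y_{H + K_2}$ in terms of $\y_H$ together with $\y_H\duct$ so that the subtracted term cancels; this is essentially the computation Gebhard and Sagan carry out to prove $K_n$ is appendable $(e)$-positive, specialised to $n=2$, combined with a direct formula for induction on the $e_{(\lambda,b)}$ basis. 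I expect the main obstacle to be exactly this bookkeeping: getting the labels and the induction index $\duct_{d-2}^d$ to line up correctly with the ``last-vertex'' induction $\duct$ after the contraction relabels vertices, and then verifying that the explicit $e$-expansion of $\y_{H+K_2} - \y_H\duct$ has no negative coefficients. Once that lemma is in hand, the induction closes immediately, and the same argument shows $TL_n + H'$ behaves well, i.e.\ gives the full appendable statement. Finally, tracking through $\projSym$ recovers that the underlying unlabelled triangular ladders are $e$-positive, consistent with Dahlberg's original result.
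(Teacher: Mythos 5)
Your sketch is not a reconstruction of the paper's argument: the paper does not prove this proposition at all, but quotes it from Dahlberg \cite[Theorem 5.3]{Dladders}, where it is established by a lengthy sign-reversing involution. Your proposed induction has a genuine gap at exactly the point that makes the triangular ladder case hard. After applying Proposition~\ref{prop:delcon} to the edge $(d-2)d$, the correct relation is $\y_{G+TL_n}=\y_{(G+TL_{n-1})+K_2}-\projUBC\bigl(Y_{G+TL_{n-1}}\duct_{d-2}^{d}\bigr)$, and the operator appearing is $\duct_{d-2}^{d}$, which repeats the variable in position $d-2$, not in the last position. The projection $\projUBC$ only identifies functions related by permutations of $\Sg_d$ fixing the last position (\cite[Lemma 6.6]{GebSag}), so $\projUBC\bigl(Y_{G+TL_{n-1}}\duct_{d-2}^{d}\bigr)$ can be replaced by $\y_{G+TL_{n-1}}\duct$ only if vertices $d-2$ and $d-1$ are exchangeable by an automorphism of $G+TL_{n-1}$; they are not, since in $TL_{n-1}$ the merged vertex $d-2$ is also adjacent to $d-4$ while $d-1$ is not (for $n$ large enough). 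This symmetry is available when one contracts inside a complete block, which is why Gebhard and Sagan's method handles $K_\alpha$-chains (Proposition~\ref{prop:Kn}) but does not extend to triangular ladders, and why Dahlberg needed a genuinely new argument. Your phrase ``after tracking labels'' conceals an identity that is unproved and in general false.

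Even granting that identification, the closing positivity check does not go through termwise. Writing $H=G+TL_{n-1}$ and $\y_H=\sum c_{(\lambda,b)}e_{(\lambda,b)}$, the $K_2$-concatenation formula (as in the proof of Proposition~\ref{prop:TP}, via \cite[Lemma 7.3]{GebSag}) and Lemma~\ref{lem:duct} give
\begin{align*}
\y_{H+K_2}-\y_{H}\duct &= \sum_{|\lambda|+b=|H|} c_{(\lambda,b)}\left(\tfrac{b-1}{b}e_{(\lambda\cup(b),1)}+\tfrac{1}{b}e_{(\lambda,b+1)}\right)-\sum_{|\lambda|+b=|H|} c_{(\lambda,b)}\left(\tfrac{1}{b}e_{(\lambda\cup(b),1)}-\tfrac{1}{b}e_{(\lambda,b+1)}\right)\\
&=\sum_{|\lambda|+b=|H|} c_{(\lambda,b)}\left(\tfrac{b-2}{b}\,e_{(\lambda\cup(b),1)}+\tfrac{2}{b}\,e_{(\lambda,b+1)}\right),
\end{align*}
so every term with $b=1$ contributes $-c_{(\lambda,1)}e_{(\lambda\cup(1),1)}$. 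Hence the combination is not manifestly $(e)$-positive; any proof must exploit cancellations specific to the actual coefficients of $\y_{G+TL_{n-1}}$, which is precisely the delicate bookkeeping that Dahlberg's involution carries out. As it stands, the induction does not close, and the statement should be treated as imported from \cite{Dladders} rather than provable by this two-line deletion-contraction argument.
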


These alone already prove the $e$-positivity of a plethora of labelled unit interval graphs. In particular, any labelled unit interval graph obtained by concatenating a sequence of complete graphs and triangular ladder graphs is $(e)$-positive (and therefore $e$-positive), as noted by Dahlberg in \cite[Corollary 5.4]{Dladders}. Dahlberg suggests in \cite[Section 5]{Dladders} that the following conjecture, which is a strengthening of the Stanley-Stembridge conjecture, may hold, and has verified it for all labelled unit interval graphs on up to $7$ vertices.

\begin{conjecture} \label{conj:alle}
\cite[Section 5]{Dladders}
All labelled unit interval graphs are $(e)$-positive.
\end{conjecture}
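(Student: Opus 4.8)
This conjecture refines Conjecture~\ref{conj:StanStem}, and hence the Stanley--Stembridge conjecture, so a complete proof is beyond what I can offer here; I instead describe the line of attack I would pursue. The goal is to show that the class $\mathcal{E}$ of $(e)$-positive labelled unit interval graphs is closed under a set of moves that reaches every labelled unit interval graph, building on the known members of $\mathcal{E}$ --- the concatenations of complete graphs (Proposition~\ref{prop:Kn}) and of triangular ladders (Proposition~\ref{prop:TL}), cycles (Proposition~\ref{prop:cycle}), and the Type II melting lollipops of Proposition~\ref{prop:typeII}.

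First I would lift Corollary~\ref{cor:eavg} from $\Sym$ to $\UBCSym$. The point is that in the proof of Proposition~\ref{prop:ap} one can choose the automorphism exchanging $v_1$ and $v_2$ to fix the vertex labelled last whenever the distinguished vertex lies outside the cluster $\{v_1,\dots,v_k\}$; then the triple-deletion identity descends through the projection $\projUBC$, and one obtains $\y_{G_j\at v}$ as an explicit convex combination of $\y_{G_0\at v}$ and $\y_{G_k\at v}$ in $\UBCSym$. This gives a ``centred modular law'' valid whenever the edges being added avoid the centre $v$.

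Second I would attempt a global induction, say on $|E(G)|$ together with a statistic measuring how far $G$ is from a $K_\alpha$-chain, with the last vertex carried along at every step. Combining the centred modular law with the deletion--contraction relation of Proposition~\ref{prop:delcon}, the aim is to write $\y_G$, for an arbitrary labelled unit interval graph $G$, as a nonnegative combination of $\y_H$ with $H\in\mathcal{E}$ --- in the spirit of Guay-Paquet's convexity reduction of the Stanley--Stembridge conjecture, but now keeping all the convex combinations inside $\UBCSym$ and landing on the cone generated by complete-graph and triangular-ladder concatenations.

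The hard part --- and the reason the conjecture is still open --- is precisely this second step, because $(e)$-positivity is \emph{not} an isomorphism invariant: $\y_{G\at v}$ genuinely depends on the choice of $v$, so relabelling moves the centre, and the deletion--contraction relation brings in the operator $\duct_j^d$, whose $e$-expansion carries negative coefficients that are annihilated only after applying $\projUBC$. No structural description of the graphs in $\mathcal{E}$ is currently known, so any successful induction will need either a manifestly nonnegative formula for $\y_{G\at v}$ --- perhaps of the acyclic-orientation flavour that the sink theorems of Section~\ref{sec:UBCQSym} begin to supply --- or a combinatorial reduction that tracks the distinguished vertex faithfully through every modular-law and deletion--contraction move, which is exactly what makes a noncommutative modular law delicate.
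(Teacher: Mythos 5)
This statement is Conjecture~\ref{conj:alle}, which the paper attributes to Dahlberg and leaves open: it strengthens Conjecture~\ref{conj:StanStem} and hence the Stanley--Stembridge conjecture, and the paper offers no proof of it, only partial progress (new families of $(e)$-positive and appendable $(e)$-positive labelled unit interval graphs). You correctly recognise this and do not claim a proof, so there is nothing to check for correctness; what you submit is a research programme, not an argument, and as such it has an unavoidable gap, namely your entire ``second step.''

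Two remarks on the programme itself. Your first step --- lifting Corollary~\ref{cor:eavg} to $\UBCSym$ under the hypothesis that the added edges avoid the vertex labelled last, so that the swapping automorphism fixes the last label and the noncommutative triple-deletion identity of \cite[Proposition 3.6]{DvWNC} descends through $\projUBC$ --- is not new ground: it is exactly Proposition~\ref{prop:(e)avg} (note the extra condition $m_i+k<|G|$ there, which is the requirement you describe), and its appendable version is Corollary~\ref{cor:a(e)avg}. Your second step, reducing an arbitrary labelled unit interval graph to a nonnegative combination of known $(e)$-positive graphs while tracking the centre, is precisely the open problem; the paper's Sections~\ref{sec:new(e)}--\ref{sec:complete} show how far such reductions currently reach (e.g.\ via Proposition~\ref{prop:Kd+H} and the equivalence of Conjectures~\ref{conj:alle} and~\ref{conj:allae}), but no induction scheme covering all labelled unit interval graphs is known, and your proposal does not supply one. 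So the verdict is simply that the statement remains a conjecture and your text should be read as a plan of attack consistent with, and partially subsumed by, what the paper already proves.
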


An even more optimistic conjecture is the following.

\begin{conjecture} \label{conj:allae}
All labelled unit interval graphs are appendable $(e)$-positive.
\end{conjecture}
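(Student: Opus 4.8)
The plan is to prove Conjecture~\ref{conj:allae} by induction after a structural reduction. First observe that the class $\mathcal A$ of appendable $(e)$-positive labelled graphs is closed under concatenation and disjoint union. It is closed under $+$ by associativity: if $H_1,H_2\in\mathcal A$ and $G$ is $(e)$-positive, then $G+H_1$ is $(e)$-positive, hence so is $(G+H_1)+H_2=G+(H_1+H_2)$. It is closed under $\mid$ because $G+(H_1\mid H_2)=(G+H_1)\mid H_2$ and $\y_{A\mid B}=\y_A\y_B$ (since $\projUBC$ is an algebra map and $Y_{A\mid B}=Y_AY_B$), while $e_{\type(\pi)}e_{\type(\sigma)}=\projUBC(e_{\pi\slashp\sigma})=e_{\type(\pi\slashp\sigma)}$ shows the product of two $(e)$-positive elements of $\UBCSym$ is $(e)$-positive. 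Since the components of a labelled unit interval graph occupy consecutive label intervals and each connected piece is an iterated concatenation of concatenation-indecomposable ones (i.e. ones not expressible as $G_1+G_2$ with $|G_1|,|G_2|\ge2$), every labelled unit interval graph is built from connected concatenation-indecomposable labelled unit interval graphs using $+$ and $\mid$; hence it suffices to prove $H\in\mathcal A$ for every such $H$. Directly from the definition of concatenation, these are exactly the connected labelled unit interval graphs whose sequence $(m_i)$ satisfies $m_j\ge j+2$ for all $1\le j\le|H|-2$.

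For such an $H$ I would induct on the pair $(|H|,|E(H)|)$, lexicographically ordered, using deletion--contraction inside $G+H$; note that the last vertex of $G+H$ is the last vertex $d_H$ of $H$. A useful simplification: for any connected labelled unit interval graph and any edge $jd$ incident to its last vertex $d$, every neighbour of $d$ other than $j$ is already a neighbour of $j$, so $G/jd=G\setminus d$; applied inside $H$ this gives $(G+H)/jd=G+(H\setminus d_H)$, a concatenation onto the smaller labelled unit interval graph $H\setminus d_H$. Taking $j'$ to be the smallest neighbour of $d_H$ in $H$, one checks that $H\setminus j'd_H$ is still a labelled unit interval graph with one fewer edge, and that both $H\setminus j'd_H$ and $H\setminus d_H$ lie in $\mathcal A$ by the inductive hypothesis (a concatenation-decomposable graph among these is an iterated concatenation of strictly smaller connected unit interval graphs). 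Proposition~\ref{prop:delcon} then yields
$$\y_{G+H}=\y_{G+(H\setminus j'd_H)}-\projUBC\bigl(Y_{G+(H\setminus d_H)}\,\duct_j^{d}\bigr),$$
where $j$ is the image of $j'$ and $d=|G+H|$; the first term is $(e)$-positive by induction, and the task is to show that subtracting the second term preserves $(e)$-positivity.

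The main obstacle is controlling the induction operators on the quotient $\UBCSym$. The operator $\duct_j^{d}$ descends to $\UBCSym$ only when $j=d-1$, i.e. when $j'=d_H-1$; but for a concatenation-indecomposable $H$ on at least three vertices the hypothesis $m_{d_H-2}\ge d_H$ forces $d_H-2$ to be adjacent to $d_H$, so $j'\le d_H-2$ — the relevant operator is therefore always $\duct_j^{d}$ with $j<d-1$, never the well-defined $\duct$, and in any case $\duct$ itself does not preserve $(e)$-positivity. Handling $\projUBC(Y_{G+(H\setminus d_H)}\,\duct_j^{d})$ thus requires either a version, uniform over all unit interval graphs, of the congruence-class bookkeeping that Gebhard and Sagan carry out for $K_\alpha$-chains and Dahlberg for triangular ladders (combining several deletion--contraction identities so that the $j$-dependent parts cancel modulo $\ker\projUBC$), or a new structural input — most naturally a lift to $\UBCSym$ of the triple-deletion/modular law underlying Corollary~\ref{cor:eavg}, which would let one pass from cliques and triangular ladders to every concatenation-indecomposable unit interval graph by adding edges. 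I expect this to be the real difficulty, and it is presumably why the statement is only conjectured. A reasonable intermediate target is to enlarge Propositions~\ref{prop:Kn} and~\ref{prop:TL}, proving appendable $(e)$-positivity for further families of concatenation-indecomposable unit interval graphs close to cliques and triangular ladders, and then to feed them into Corollary~\ref{cor:eavg} as in the proof of Proposition~\ref{prop:typeII}.
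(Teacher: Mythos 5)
The statement you set out to prove is Conjecture~\ref{conj:allae}: the paper does not prove it, and neither does your proposal --- as you yourself note at the end, the decisive step is missing. Your preliminary reductions are correct: closure of the class of appendable $(e)$-positive labelled graphs under concatenation is Corollary~\ref{cor:summ}(a); closure under disjoint union via $G+(H_1\mid H_2)=(G+H_1)\mid H_2$ and multiplicativity of $\y$ is fine; the characterization of the concatenation-indecomposable connected labelled unit interval graphs by $m_j\ge j+2$ for $j\le |H|-2$ is correct, as is the identity $(G+H)/jd=G+(H\setminus d_H)$, which uses that $N(d_H)$ is a clique. But everything then hinges on showing that subtracting $\projUBC\bigl(Y_{G+(H\setminus d_H)}\duct_j^{d}\bigr)$ from the $(e)$-positive function $\y_{G+(H\setminus j'd_H)}$ leaves an $(e)$-positive function, and you supply no mechanism for this. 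As you observe, $\duct_j^{d}$ with $j<d-1$ does not descend to $\UBCSym$ in general (e.g.\ $p_{13/2}\duct_1^4=p_{134/2}$ and $p_{1/23}\duct_1^4=p_{14/23}$ have different types although $13/2$ and $1/23$ do not), and the operator that does descend, $\duct$, sends $e_{(\lambda,b)}$ to $\frac{1}{b}e_{(\lambda\cup(b),1)}-\frac{1}{b}e_{(\lambda,b+1)}$ by Lemma~\ref{lem:duct}, so it does not preserve $(e)$-positivity; your induction on $(|H|,|E(H)|)$ therefore has no engine. This is not a repairable local gap: by Proposition~\ref{prop:Kd+H} the conjecture is equivalent to Conjecture~\ref{conj:alle}, which strengthens the Stanley--Stembridge conjecture, so any argument at this level of generality would have to contain a solution of that open problem.

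What your write-up actually establishes is a correct reduction of the conjecture to its connected, concatenation-indecomposable case, together with an accurate diagnosis of where the difficulty sits (controlling the $\duct_j^{d}$ terms modulo $\ker\projUBC$, as Gebhard and Sagan do for $K_\alpha$-chains and Dahlberg does for triangular ladders), plus the sensible suggestion of enlarging the stock of appendable $(e)$-positive indecomposables and feeding them into Proposition~\ref{prop:(e)avg} and Corollary~\ref{cor:a(e)avg}; the latter is precisely the programme the paper carries out in Section~\ref{sec:new(e)}. Present it as a reduction and a research plan, not as a proof.
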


One of our goals in the next section will be to lend credence to this conjecture, by finding more families of appendable $(e)$-positive labelled unit interval graphs.

\section{New $(e)$-positive labelled unit interval graphs}\label{sec:new(e)}
In this section we will combine the ideas in Section~\ref{sec:arithprog} with the ideas of Gebhard and Sagan to find new $(e)$-positive and appendable $(e)$-positive labelled unit interval graphs. We begin this section by modifying Corollary~\ref{cor:eavg} to obtain versions that apply to $(e)$-positivity and appendable $(e)$-positivity.

\begin{proposition}\label{prop:(e)avg}
Suppose $G$ is a labelled unit interval graph. Then the following hold.
\begin{enumerate}
    \item[(a)] If $i<|G|$ is a vertex such that $m_{i}+1\le m_{i+1}$ and $(w_{m_i+1},m_{m_{i}+1})=\cdots=(w_{m_i+k},m_{m_{i}+k})$ with $m_i+k<|G|$, then the $(\y_{G_j})_{j=0}^k$ form an arithmetic progression, where $G_j= G + \{ib\mid m_i+1\le b \le m_i+j\}$. In particular, if $G_0$ and $G_k$ are $(e)$-positive, then so is every $G_j$ for $0\le j \le k$.
    \item[(b)] Alternatively, if $i>1$ is a vertex such that $w_i-1\ge w_{i-1}$ and $(w_{w_i-1},m_{w_i-1})=\cdots=(w_{w_i-k},m_{w_i-k})$, then the $(\y_{G_j})_{j=0}^k$ form an arithmetic progression, where $G_j = G + \{bi\mid w_i-1\ge b\ge w_i-j\}$ for $0\le j \le k$. In particular, if $G_0$ and $G_k$ are $(e)$-positive, then so is every $G_j$ for $0\le j \le k$.
\end{enumerate}
\end{proposition}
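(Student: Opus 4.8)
The plan is to closely mirror the proof of Proposition~\ref{prop:ap}, but working in $\UBCSym$ with $\y_H = \projUBC(Y_H)$ in place of $X_H$. Two ingredients are needed. First, a triple-deletion identity for $Y$ in $\NCSym$: if $H$ is a labelled graph with $ab\in E(H)$ and $ac,bc\notin E(H)$, then $Y_{H+ac+bc} = Y_{H+ac} + Y_{H+bc} - Y_H$. This follows by inclusion--exclusion over proper colourings of the vertex set: a colouring $\kappa$ proper for $H$ already has $\kappa(a)\ne\kappa(b)$, so ``$\kappa(a)=\kappa(c)$ and $\kappa(b)=\kappa(c)$'' never occurs, whence $\mathbf{1}[\kappa(a)\ne\kappa(c),\,\kappa(b)\ne\kappa(c)] = \mathbf{1}[\kappa(a)\ne\kappa(c)] + \mathbf{1}[\kappa(b)\ne\kappa(c)] - 1$; multiplying by $x_{\kappa(1)}\cdots x_{\kappa(d)}$ and summing over proper colourings of $H$ gives the identity (this also follows from two applications of a relabelled Proposition~\ref{prop:delcon}, the edge $ab$ making one contraction redundant). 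Second, the facts recalled in Section~\ref{sec:UBCSym}: $Y_{\delta(H)} = \delta\circ Y_H$, and $\projUBC(\delta\circ f) = \projUBC(f)$ whenever $f\in\NCSym^d$ and $\delta\in\Sg_d$ fixes $d$.

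Next I would prove the $\UBCSym$ analogue of Proposition~\ref{prop:ap}: if $v_1,\dots,v_k$ are distinct vertices of a labelled graph $G$ on $d$ vertices, all different from $d$, with $N[v_1] = \dots = N[v_k]$, and $w$ is another vertex adjacent to none of them, then $(\y_{G_j})_{j=0}^k$ is an arithmetic progression, where $G_0 = G$ and $G_j = G_{j-1} + v_j w$. The induction on $k$ runs exactly as in Proposition~\ref{prop:ap} (the case $k\le 1$ being trivial); only the computation $\y_{G_2} = 2\y_{G_1} - \y_{G_0}$ of the common difference changes. Since $v_2\in N[v_1]\setminus\{v_1\}$ we have $v_1v_2\in E(G)$, while $v_1w,v_2w\notin E(G)$, so the $\NCSym$ triple-deletion gives $Y_{G+v_1w+v_2w} = Y_{G+v_1w} + Y_{G+v_2w} - Y_G$; applying $\projUBC$ it remains to check $\y_{G+v_1w} = \y_{G+v_2w}$. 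The transposition $\delta = (v_1\ v_2)$ is a graph automorphism of $G$ (as $N[v_1] = N[v_2]$) and fixes $w$, so $\delta(G+v_1w) = G+v_2w$ and $Y_{G+v_2w} = \delta\circ Y_{G+v_1w}$; since $\delta$ fixes $d$ --- this is where $v_1,v_2\ne d$ is used --- the map $\projUBC$ is $\delta$-invariant, so $\y_{G+v_2w} = \y_{G+v_1w}$, and $\y_{G_2} = 2\y_{G_1} - \y_{G_0}$ follows.

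Finally I would deduce (a) and (b) just as Corollary~\ref{cor:eavg} is deduced from Proposition~\ref{prop:ap}. For (a), apply the analogue above with $w=i$ and $v_j = m_i+j$ for $1\le j\le k$: these are distinct and, because $m_i+k<|G|$, all different from $|G|$; the hypothesis $(w_{m_i+1},m_{m_i+1})=\dots=(w_{m_i+k},m_{m_i+k})$ says $N[m_i+1]=\dots=N[m_i+k]$; and $i$ is adjacent to no $v_j$ because $m_i$ is the largest label in $N[i]$. One checks the resulting $G_j$ equals $G+\{ib : m_i+1\le b\le m_i+j\}$, while the role of $m_i+1\le m_{i+1}$ is only to keep the $G_j$ labelled unit interval graphs (cf.\ the Observation after Corollary~\ref{cor:eavg}). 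Part (b) is identical with $w=i$ and $v_j = w_i-j$, which automatically satisfy $v_j<i\le|G|$, so no extra hypothesis is required there. The ``in particular'' statements then hold because an arithmetic progression with $(e)$-positive endpoints $\y_{G_0},\y_{G_k}$ has $\y_{G_j} = \frac{k-j}{k}\y_{G_0} + \frac{j}{k}\y_{G_k}$, a nonnegative combination of $(e)$-positive elements. The only genuine subtlety --- and the reason the hypotheses differ from those of Corollary~\ref{cor:eavg} --- is that the relabelling step must use a permutation fixing the last vertex, which forces the extra condition $m_i+k<|G|$ in part (a); everything else transfers mechanically from Section~\ref{sec:arithprog}.
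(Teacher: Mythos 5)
Your proposal is correct and follows essentially the same route as the paper: the same induction on $k$, with the $k>1$ step handled by noncommutative triple-deletion together with the transposition swapping the two equal-neighbourhood vertices, which fixes the last vertex (this is exactly where $m_i+k<|G|$ is needed) and is therefore invisible to $\projUBC$. The only differences are packaging -- you rederive triple-deletion instead of citing \cite[Proposition 3.6]{DvWNC}, and you run the induction through a general $\UBCSym$ analogue of Proposition~\ref{prop:ap} for arbitrary labelled graphs before specializing, which conveniently sidesteps checking that the $G_j$ remain labelled unit interval graphs satisfying the hypotheses -- but nothing essential changes.
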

\begin{proof}
We will only give a proof of part (a). The proof of (b) is similar.

It suffices to show that the $(\y_{G_j})_{j=0}^k$ form an arithmetic progression in $\UBCSym$, since the second part of the statement would then follow from $$\y_{G_j}=\frac{k-j}{k}\y_{G_0}+\frac{j}{k}\y_{G_k}.$$ {We will} proceed by induction on $k$.

The base case $k=1$ is immediate, because $(\y_{G_j})_{j=0}^1$ will always be an arithmetic progression, being a sequence of length $2$.

When $k>1$, note since $(w_{m_i+1},m_{m_i+1})=(w_{m_i+2},m_{m_i+2})$, there is a graph automorphism of $G$ obtained by exchanging vertices $m_i+1$ and $m_i+2$. Let $\delta \in \Sg_{|G|}$ be the permutation swapping $m_i+1$ and $m_i+2$. Then the labelled graph obtained from adding an edge joining $i$ and $m_i+2$ to $G$ is the labelled graph $\delta(G_1)$. Note $\delta$ fixes $|G|$, since $m_i+k<|G|$.

The vertices $m_i+1$, $m_i+2$, $i$ are mutually adjacent in $G_2$. By noncommutative triple-deletion \cite[Proposition 3.6]{DvWNC}, $$Y_{G_2}-Y_{\delta(G_1)}-Y_{G_1}+Y_{G_0}=0.$$ Applying the projection map $\projUBC$, we obtain $$\y_{G_2}-2\y_{G_1}+\y_{G_0}=0.$$

We next note that the labelled unit interval graph $G_1$ satisfies the hypotheses of part (a) with vertex $i$ and the $k-1$ vertices $m_{i}+2,\dots,m_i+k$ (where $m_i$ is defined by the sequence of $G$). By the inductive hypothesis, the $(\y_{G_j})_{j=1}^k$ form an arithmetic progression with common difference $\y_{G_2}-\y_{G_1}=\y_{G_1}-\y_{G_0}$. Therefore it can be extended to obtain the arithmetic progression $(\y_{G_j})_{j=0}^k$, as desired.
\end{proof}

\begin{corollary}\label{cor:a(e)avg}
Suppose $G$ is a labelled unit interval graph. Then the following hold.
\begin{enumerate}
    \item[(a)] If $i<|G|$ is a vertex such that $m_{i}+1\le m_{i+1}$ and $(w_{m_i+1},m_{m_{i}+1})=\cdots=(w_{m_i+k},m_{m_{i}+k})$ with $m_i+k<|G|$, then define the labelled graphs $G_j= G + \{ib\mid m_i+1\le b \le m_i+j\}$ for $0\le j \le k$. If $G_0$ and $G_k$ are appendable $(e)$-positive, then so is every $G_j$ for $0\le j \le k$.
    \item[(b)] Alternatively, if $i>1$ is a vertex such that $w_i-1\ge w_{i-1}$ and $(w_{w_i-1},m_{w_i-1})=\cdots=(w_{w_i-k},m_{w_i-k})$ with $w_i-k>1$, then define the labelled graphs $G_j = G + \{bi\mid w_i-1\ge b\ge w_i-j\}$ for $0\le j \le k$. If $G_0$ and $G_k$ are appendable $(e)$-positive, then so is every $G_j$ for $0\le j \le k$.
\end{enumerate}
\end{corollary}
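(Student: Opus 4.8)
The plan is to reduce the appendable $(e)$-positivity of each $G_j$ to the already-established Proposition~\ref{prop:(e)avg}, by applying the latter not to $G$ itself but to a suitable concatenation $F+G$. I will spell out part (a); part (b) is handled in exactly the same way, with the hypothesis $w_i-k>1$ playing the role that $m_i+k<|G|$ plays in (a).

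First I would fix an arbitrary $(e)$-positive labelled graph $F$, say with $|F|=n$, and set $H=F+G=F+G_0$, which is again a labelled unit interval graph. Under the concatenation, the vertex $\ell$ of $G$ becomes the vertex $\ell+n-1$ of $H$; in particular vertex $1$ of $G$ is identified with vertex $n$ of $F$ (call this the \emph{seam}), and the last vertex of $G$ becomes the last vertex of $H$. The crucial observation is that no vertex of $H$ with label $>n$ is adjacent to a vertex with label $<n$, so that for every vertex $\ell\ge 2$ of $G$ the closed neighbourhood $N_H[\ell+n-1]$ is precisely $N_G[\ell]$ with every label increased by $n-1$; hence $m^H_{\ell+n-1}=m^G_\ell+n-1$ and $w^H_{\ell+n-1}=w^G_\ell+n-1$. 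Writing $I=i+n-1$, I would then check that $H$ satisfies the hypotheses of Proposition~\ref{prop:(e)avg}(a) at vertex $I$ with the same $k$. Here the inequality $m^G_i\ge 1$ guarantees that $m^H_I=m^G_i+n-1$ even when $i=1$ and $I$ is the seam, and the conditions $m^G_i+1\ge 2$ and $m^G_i+k<|G|$ guarantee that the vertices $m^G_i+1,\dots,m^G_i+k$ map into the range of $H$ lying strictly between the seam and the last vertex, where the neighbourhood data is a mere shift; so each of $I<|H|$, $m^H_I+1\le m^H_{I+1}$, the equalities of the pairs $(w^H_\bullet,m^H_\bullet)$, and $m^H_I+k<|H|$ translates directly from its counterpart for $G$.

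Applying Proposition~\ref{prop:(e)avg}(a) to $H$ now yields graphs $H_j=H+\{Ib : m^H_I+1\le b\le m^H_I+j\}$ for $0\le j\le k$, and from the definitions of concatenation and of $G_j$ we get $H_j=F+G_j$. By that proposition it remains only to know that $H_0=F+G_0$ and $H_k=F+G_k$ are $(e)$-positive; but this is immediate, since $G_0$ and $G_k$ are appendable $(e)$-positive by hypothesis and $F$ is $(e)$-positive. Hence $F+G_j$ is $(e)$-positive for all $0\le j\le k$, and as $F$ was an arbitrary $(e)$-positive labelled graph, every $G_j$ is appendable $(e)$-positive. The only genuine work is the middle step: confirming that concatenation leaves the neighbourhood data $(w_\bullet,m_\bullet)$ of the non-seam vertices of $G$ unchanged up to a shift, so that the hypotheses of Proposition~\ref{prop:(e)avg} survive the passage from $G$ to $F+G$. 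This is exactly where the hypotheses $m_i+k<|G|$ in (a) and $w_i-k>1$ in (b) are used, namely to keep every vertex touched by the edge additions away from the seam, where the neighbourhood structure of $F+G$ need not be a simple shift of that of $G$.
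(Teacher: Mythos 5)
Your strategy---concatenate a graph on the left, apply Proposition~\ref{prop:(e)avg} to the concatenation, and use that $F+G_0$ and $F+G_k$ are $(e)$-positive---is close in spirit to the paper's, but as written it has a genuine gap. You let $F$ be an \emph{arbitrary} $(e)$-positive labelled graph and assert that $H=F+G$ ``is again a labelled unit interval graph.'' That is false in general: $(e)$-positive labelled graphs need not be unit interval graphs (for instance $C_4$ is $(e)$-positive by Proposition~\ref{prop:cycle} but is not a labelled unit interval graph, since the edge between $1$ and $4$ would force the edges $13$ and $24$), and then $F+G$ is not one either. Proposition~\ref{prop:(e)avg} is stated only for labelled unit interval graphs, so you cannot invoke it for $H$; your middle step, which checks its hypotheses via the sequences $(w^H_\bullet,m^H_\bullet)$, presupposes exactly the unit interval structure of $H$ that you do not have. (A secondary inaccuracy: in part (a) the hypothesis $m_i+k<|G|$ is there to keep the vertices $m_i+1,\dots,m_i+k$ away from the \emph{last} vertex, so that the swapping permutations fix it and survive the projection $\projUBC$; these vertices are automatically $\ge 2$, hence away from the seam. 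Only in part (b) does the extra condition $w_i-k>1$ protect the seam.)

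The paper sidesteps precisely this issue by not quantifying over all $(e)$-positive $F$: it takes $F=K_n$, for which $K_n+G_0$ \emph{is} a labelled unit interval graph, applies Proposition~\ref{prop:(e)avg}(a) there to conclude that every $K_n+G_j$ is $(e)$-positive (using that $K_n+G_0$ and $K_n+G_k$ are $(e)$-positive by appendable $(e)$-positivity), and then invokes Proposition~\ref{prop:Kd+H}, which says appendable $(e)$-positivity is equivalent to $(e)$-positivity of $K_d+H$ for all $d$, to upgrade this to appendable $(e)$-positivity of each $G_j$. Your argument can be repaired either by following that reduction, or by first proving a strengthened version of Proposition~\ref{prop:(e)avg} whose hypotheses are phrased in terms of equal closed neighbourhoods and a fixed last vertex (in the spirit of Proposition~\ref{prop:ap}); your shift computation $N_H[\ell+n-1]=N_G[\ell]+(n-1)$ for $\ell\ge2$ would then carry the verification. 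As it stands, the proposal is missing one of these two ingredients.
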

\begin{proof}
For part (a), note if $G_0$ and $G_k$ are appendable $(e)$-positive, then for every $n\in \mathbb Z_{>0}$, the labelled graphs $K_n+G_0$ and $K_n+G_k$ are $(e)$-positive. The labelled unit interval graph $K_n+G_0$ satisfies the hypotheses of Proposition~\ref{prop:(e)avg}(a) on the images in $K_n+G_0$ of the vertices $i$ and $m_i+1,\dots,m_i+k$ of $G_0$. Therefore, 
$$\y_{K_n+G_j}=\frac{k-j}{k}\y_{K_n+G_0} + \frac{j}{k}\y_{K_n+G_k},$$
and so $K_n+G_j$ is $(e)$-positive for every $n\in \mathbb Z_{>0}$ and every $0\le j\le k$. By Proposition~\ref{prop:Kd+H}, the labelled graphs $G_j$ for $0\le j\le k$ are all appendable $(e)$-positive.

The proof of part (b) is the same, except part (b) of Proposition~\ref{prop:(e)avg} is applied instead of part (a).
\end{proof}

\begin{remark}\label{rem:rev}
Note the symmetry between the statements of parts (a) and (b) of Corollary~\ref{cor:a(e)avg}. In particular, if the labelled unit interval graphs $(G_j)_{j=0}^k$ are considered in either part of the corollary, then the appendable $(e)$-positivity of the $(G_j^r)_{j=0}^k$ follow from the appendable $(e)$-positivity of $G_0^r$ and $G_k^r$ by applying the opposite part of the corollary.
\end{remark}

We next prove the $(e)$-positivity and appendable $(e)$-positivity of several families of labelled unit interval graphs. The appendable $(e)$-positivity of our first family of graphs will follow from the computations of Gebhard and Sagan in \cite[Section 7]{GebSag}.

\begin{proposition}\label{prop:TP}
\emph{Twin peaks graphs} $TP_{n+1}$ for $n\ge 2$, obtained by removing the edge between $1$ and $n+1$ in $K_{n+1}$, are appendable $(e)$-positive.
\end{proposition}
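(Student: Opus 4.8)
The plan is to use the deletion–contraction relation to express $\y_{G+TP_{n+1}}$ in terms of $\y_{G+K_n}$, and then to feed in the explicit description of $\y_{G+K_n}$ obtained in Gebhard and Sagan's analysis of $K_n$-concatenation. So, first I would set $p=|G|$ and observe that in $G+TP_{n+1}$ the appended $TP_{n+1}$ occupies the vertices $p,p+1,\dots,p+n$, that its missing edge is $p(p+n)$, and that the last vertex $p+n$ has neighbourhood $N(p+n)=\{p+1,\dots,p+n-1\}$, which is a clique (part of the $K_n$ on $\{p,\dots,p+n-1\}$). Since these $n-1$ neighbours must then receive distinct colours, peeling the edges at $p+n$ one at a time via Proposition~\ref{prop:delcon} (at each step the contraction identifies $p+n$ with a vertex of the $K_n$ and the result is again $G+K_n$) gives
\[
Y_{G+TP_{n+1}}=Y_{G+K_n}\cdot e_1-\sum_{j=p+1}^{p+n-1}Y_{G+K_n}\duct_j^{p+n}.
\]
The vertices $p+1,\dots,p+n-1$ are interchanged by automorphisms of $G+K_n$, and $\projUBC$ is invariant under relabellings fixing the last vertex (\cite[Lemma 6.6]{GebSag}), so each $\projUBC\bigl(Y_{G+K_n}\duct_j^{p+n}\bigr)=\y_{G+K_n}\duct$, and applying $\projUBC$ yields
\[
\y_{G+TP_{n+1}}=\y_{G+K_n}\cdot e_1-(n-1)\,\y_{G+K_n}\duct .
\]
(Equivalently, one application of Proposition~\ref{prop:delcon} to the edge $p(p+n)$ of $G+K_{n+1}$ gives $\y_{G+TP_{n+1}}=\y_{G+K_{n+1}}+\projUBC\bigl(Y_{G+K_n}\duct_p^{p+n}\bigr)$.)

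Next I would record how $e_1\cdot(-)$ and $\duct$ act on the $e$-basis of $\UBCSym$: clearly $e_{(\lambda,b)}\cdot e_1=e_{(\lambda\cup(b),1)}$; and since $\y_{K_m}=e_{(\emptyset,m)}$ while the same reasoning applied to obtaining $K_{m+1}$ from $K_m$ by adjoining a vertex adjacent to every vertex of $K_m$ gives $\y_{K_{m+1}}=\y_{K_m}e_1-m\,\y_{K_m}\duct$, solving yields $e_{(\emptyset,b)}\duct=\tfrac1b\bigl(e_{((b),1)}-e_{(\emptyset,b+1)}\bigr)$. As $\duct$ acts only on the last factor of a product and $e_{(\lambda,b)}=e_{(\emptyset,\lambda_1)}\cdots e_{(\emptyset,\lambda_{\ell})}e_{(\emptyset,b)}$, this extends to $e_{(\lambda,b)}\duct=\tfrac1b\bigl(e_{(\lambda\cup(b),1)}-e_{(\lambda,b+1)}\bigr)$. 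Hence for each type $(\lambda,b)$,
\[
e_{(\lambda,b)}\cdot e_1-(n-1)\,e_{(\lambda,b)}\duct=\frac{b-n+1}{b}\,e_{(\lambda\cup(b),1)}+\frac{n-1}{b}\,e_{(\lambda,b+1)},
\]
so $\y_{G+TP_{n+1}}$ is this transformation applied term by term to the $e$-expansion of $\y_{G+K_n}$.

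Finally, since $G$ is $(e)$-positive and $K_n$ is appendable $(e)$-positive (Proposition~\ref{prop:Kn}), $\y_{G+K_n}$ is $(e)$-positive. This alone does not finish the argument: the factor $(b-n+1)/b$ is negative when $b<n-1$, and indeed $\y_{G+K_n}\duct$ need not be $(e)$-positive. Here is exactly where I would invoke Gebhard and Sagan's computations in \cite[Section 7]{GebSag}: their formula for $\y_{G+K_n}$ in terms of $\y_G$ controls the coefficients of the small-$b$ terms precisely enough to show that the negative contributions above cancel against positive ones, so that $\y_{G+K_n}\cdot e_1-(n-1)\,\y_{G+K_n}\duct$ is $(e)$-positive. (For $G=K_1$ this recovers $\y_{TP_{n+1}}=\tfrac1n e_{((n),1)}+\tfrac{n-1}{n}e_{(\emptyset,n+1)}$, and for example $\y_{K_2+TP_4}=\tfrac16 e_{((4),1)}+\tfrac13 e_{((3),2)}+\tfrac16 e_{(\emptyset,5)}+\tfrac13 e_{((1),4)}$, the would-be term $e_{((3,1),1)}$ having cancelled.) Thus $G+TP_{n+1}$ is $(e)$-positive for every $(e)$-positive $G$, i.e.\ $TP_{n+1}$ is appendable $(e)$-positive. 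The main obstacle is precisely this last step: the reduction above is routine, but one must extract the required cancellation from the exact shape of Gebhard and Sagan's formula rather than from a bare $(e)$-positivity statement.
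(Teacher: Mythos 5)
Your reduction is correct as far as it goes: peeling the edges at the last vertex of $G+TP_{n+1}$ (whose neighbourhood $\{|G|+1,\dots,|G|+n-1\}$ is a clique) and using the automorphisms of $G+K_n$ that permute those vertices does give $\y_{G+TP_{n+1}}=\y_{G+K_n}\,e_{(\emptyset,1)}-(n-1)\,\y_{G+K_n}\duct$, your re-derivation of $e_{(\lambda,b)}\duct=\tfrac1b\bigl(e_{(\lambda\cup(b),1)}-e_{(\lambda,b+1)}\bigr)$ agrees with Lemma~\ref{lem:duct}, and your two numerical examples check out against the paper's formulas. This is essentially a reorganization of the paper's first step, which instead writes $\y_{G+TP_{n+1}}=\y_{G+K_{n+1}}+\y_{G+K_n\at|G|}\duct$ via a single deletion--contraction and quotes \cite[Lemmas 7.3 and 7.5]{GebSag} for the $e$-expansions of both summands.

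The genuine gap is that you stop exactly at the step that constitutes the paper's proof. You correctly observe that $(e)$-positivity of $\y_{G+K_n}$ alone cannot finish the argument because of the factor $\tfrac{b-n+1}{b}$, and you then merely assert that Gebhard and Sagan's formula for $\y_{G+K_n}$ "controls the coefficients of the small-$b$ terms precisely enough" for the negative contributions to cancel; no such verification is given, and you yourself flag it as the main obstacle. In the paper this is the entire content of the proof: writing $\y_G=\sum c_{(\lambda,b)}e_{(\lambda,b)}$ with $c_{(\lambda,b)}\ge0$, adding the expansions of $\y_{G+K_{n+1}}$ and $\y_{G+K_n\at|G|}\duct$ gives, for each $(\lambda,b)$, the contribution $\sum_{i=0}^{n-1}c_{(\lambda,b)}\tfrac{(n-1)!(b-1)!}{(n-i-1)!(b+i)!}\bigl((b-n+i+1)e_{(\lambda\cup(b+i),n-i)}+i\,e_{(\lambda\cup(n-i-1),b+i+1)}\bigr)$, and one checks that whenever $b-n+i+1<0$ the index $j=n-i-b-1$ satisfies $0\le j\le n-1$ and $(\lambda\cup(n-j-1),b+j+1)=(\lambda\cup(b+i),n-i)$, so the negative coefficient cancels exactly against the corresponding $j$-term. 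To complete your route you would have to substitute the explicit expansion of $\y_{G+K_n}$ (Lemma 7.3 of \cite{GebSag}, one size down) into your transformation $e_{(\lambda,b)}\mapsto\tfrac{b-n+1}{b}e_{(\lambda\cup(b),1)}+\tfrac{n-1}{b}e_{(\lambda,b+1)}$ and carry out the analogous pairing of terms; without that computation the proposal does not establish the proposition.
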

\begin{proof}
Let $G$ be a labelled graph with $$\y_G=\sum_{|\lambda|+b=|G|}c_{(\lambda,b)}e_{(\lambda,b)}.$$ Then by \cite[Lemma 7.3]{GebSag} and \cite[Lemma 7.5]{GebSag}, which describe the expansions of $\y_{G+K_{n+1}}$ and $\y_{G+K_n:|G|}\duct$ in the $e$-basis,
\begin{align*}
    \y_{G+K_{n+1}}&=\sum_{|\lambda|+b=|G|}\sum_{i=0}^{n-1} c_{(\lambda,b)} \frac{(n-1)!(b-1)!}{(n-i-1)!(b+i)!}\left((b-n+i)e_{({\lambda\pcup(b+i)},n-i)} + (i+1)e_{({\lambda\pcup(n-i-1)},b+i+1)}\right),\\
\y_{G+K_n\at |G|}\duct &= \sum_{|\lambda|+b=|G|}\sum_{i=0}^{n-1}c_{(\lambda,b)}\frac{(n-1)!(b-1)!}{(n-i-1)!(b+i)!}\left(e_{({\lambda\pcup(b+i)},n-i)} -e_{({\lambda\pcup(n-i-1)},b+i+1)}\right).
\end{align*}

By the deletion-contraction relation in Proposition~\ref{prop:delcon},
\begin{align*}
    \y_{G+TP_{n+1}}&=\y_{G+K_{n+1}}+\y_{G+K_n\at |G|}\duct\\
    &=\sum_{|\lambda|+b=|G|}\sum_{i=0}^{n-1} c_{(\lambda,b)} \frac{(n-1)!(b-1)!}{(n-i-1)!(b+i)!}\left((b-n+i+1)e_{({\lambda\pcup(b+i)},n-i)} + ie_{({\lambda\pcup(n-i-1)},b+i+1)}\right).
\end{align*}

Now suppose $G$ is $(e)$-positive, i.e. each coefficient $c_{(\lambda,b)}$ is $\ge 0$. The contribution of $c_{(\lambda,b)}e_{(\lambda,b)}$ to $\y_{G+TP_{n+1}}$ is $$\sum_{i=0}^{n-1} c_{(\lambda,b)} \frac{(n-1)!(b-1)!}{(n-i-1)!(b+i)!}\left((b-n+i+1)e_{({\lambda\pcup(b+i)},n-i)} + ie_{({\lambda\pcup(n-i-1)},b+i+1)}\right),$$ which has nonnegative coefficients in the $e$-basis, except possibly at the $e_{({\lambda\pcup(b+i)},n-1)}$ when both $0\le i\le n-1$ and $b-n+i+1<0$. In that case, $j=n-i-b-1$ satisfies $0\le j\le n-1$ and $({\lambda\pcup(n-j-1)},b+j+1)= ({\lambda\pcup(b+i)},n-i)$.

So the coefficient of $e_{({\lambda\pcup(b+i)},n-i)}$ in the contribution of $c_{(\lambda,b)}e_{(\lambda,b)}$ to $\y_{G+TP_{n+1}}$ when $0\le i \le n-1$ and $b-n+i+1<0$ is
$$c_{(\lambda,b)}\left(\frac{(n-1)!(b-1)!}{(n-i-1)!(b+i)!}(b-n+i+1)+\frac{(n-1)!(b-1)!}{(b+i)!(n-i-1)!}(n-i-b-1)\right)=0.$$

Therefore, if $G$ is $(e)$-positive, then so is $G+TP_{n+1}$. {That is,} $TP_{n+1}$ is appendable $(e)$-positive.
\end{proof}

We can now find new $(e)$-positive and appendable $(e)$-positive families of graphs by applying Proposition~\ref{prop:(e)avg} and Corollary~\ref{cor:a(e)avg} to all the known $(e)$-positive and appendable $(e)$-positive labelled unit interval graphs we already have.

\begin{proposition}\label{prop:IC}
\emph{Melting ice cream scoop graphs} $IC_{n+1}^{(k)}$ for $n\ge 2$ and $1\le k \le n$, obtained by deleting the edges between vertex $n+1$ and vertices $1,\dots,k$ from $K_{n+1}$, and their reverse graphs are appendable $(e)$-positive.
\end{proposition}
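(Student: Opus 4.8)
The plan is to derive the melting ice cream scoop graphs $IC_{n+1}^{(k)}$ by interpolating between two already-known appendable $(e)$-positive graphs using Corollary~\ref{cor:a(e)avg}. First I would identify the endpoints of the arithmetic progression: at one extreme, $IC_{n+1}^{(n)}$ should be (a labelled version of) $K_1 \mid K_n$, or more precisely the graph obtained from $K_{n+1}$ by deleting all edges at vertex $n+1$ except none—wait, deleting edges between $n+1$ and $1,\dots,n$ leaves vertex $n+1$ isolated, so $IC_{n+1}^{(n)}$ is $K_n \mid K_1$, which is appendable $(e)$-positive by Proposition~\ref{prop:Kn} (since $K_1$ and $K_n$ are each appendable $(e)$-positive and concatenations/disjoint unions preserve this; more carefully, $K_n \mid K_1 = K_n + K_2$ up to the identification, so one should check this endpoint directly or realize it as a concatenation of complete graphs). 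At the other extreme, $IC_{n+1}^{(0)} = K_{n+1}$ is appendable $(e)$-positive by Proposition~\ref{prop:Kn}. The graphs $IC_{n+1}^{(j)}$ for $0 \le j \le n$ then form exactly the family $G_j$ appearing in Corollary~\ref{cor:a(e)avg}.

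The key step is to verify the unit-interval bookkeeping so that Corollary~\ref{cor:a(e)avg}(b) (or (a) after passing to reverse graphs) applies. I would take $G = IC_{n+1}^{(n)} = K_n \mid K_1$ as the base labelled unit interval graph and let vertex $i = n+1$; then $w_{n+1} = n+1$ in $G$, and one wants to add edges $b(n+1)$ for $b = n, n-1, \dots, 1$ in succession, which produces $IC_{n+1}^{(n-1)}, IC_{n+1}^{(n-2)}, \dots, IC_{n+1}^{(0)} = K_{n+1}$. The hypotheses $w_{w_i - 1} \ge w_{i-1}$ and $(w_{w_i-1}, m_{w_i-1}) = \cdots = (w_{w_i - n}, m_{w_i-n})$ need to be checked: the vertices $1, \dots, n$ of $G = K_n \mid K_1$ all have the same closed neighbourhood $\{1,\dots,n\}$, so their $(w,m)$ pairs all equal $(1, n)$, and $w_i - n = (n+1) - n = 1 > 0$ (or $\ge 1$, matching the boundary condition), so the hypothesis holds. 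Thus by Corollary~\ref{cor:a(e)avg}(b), since the endpoints $G_0 = K_n \mid K_1$ and $G_k = K_{n+1}$ are appendable $(e)$-positive, every intermediate $G_j = IC_{n+1}^{(n-j)}$ is appendable $(e)$-positive. The reverse graphs are then handled by Remark~\ref{rem:rev}.

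The main obstacle I anticipate is getting the endpoint $K_n \mid K_1$ to be recognized as appendable $(e)$-positive: $K_n \mid K_1$ is a disjoint union, not a concatenation, so it is not literally covered by Proposition~\ref{prop:Kn} as stated, and one must either invoke a product formula (appendable $(e)$-positivity of $G \mid H$ when suitable, since $Y_{G\mid H} = Y_G Y_H$) or — more cleanly — observe that $K_n \mid K_1$ is appendable $(e)$-positive because $G + (K_n \mid K_1)$ equals $G + K_n + K_1$ as labelled graphs (the concatenation identifies the last vertex of $G + K_n$ with the isolated $K_1$, which adds no edges), so this is just $(G + K_n) + K_1$, and iterated application of Proposition~\ref{prop:Kn} finishes it. A secondary point to be careful about is the off-by-one indexing relating the superscript $k$ in $IC_{n+1}^{(k)}$ to the index $j$ in the arithmetic progression; I would state explicitly that $G_j = IC_{n+1}^{(n-j)}$ so that $G_0, G_n$ are the two endpoints, and note that the range $1 \le k \le n$ in the proposition corresponds to $0 \le j \le n-1$, all of which are covered. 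Everything else is a direct citation of Corollary~\ref{cor:a(e)avg} and Remark~\ref{rem:rev}.
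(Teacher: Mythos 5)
There is a genuine gap: your application of Corollary~\ref{cor:a(e)avg}(b) violates its hypotheses. With $G=K_n\mid K_1$, $i=n+1$ and $k=n$ you have $w_i-k=(n+1)-n=1$, but the corollary requires $w_i-k>1$ (not $\ge 1$, as you wrote). This boundary condition is not cosmetic: the proof of the corollary prepends $K_d$, and in $K_d+(K_n\mid K_1)$ the image of vertex $1$ acquires a larger closed neighbourhood than vertices $2,\dots,n$, so the arithmetic progression of Proposition~\ref{prop:(e)avg}(b) only extends over the edges to vertices $n,\dots,2$ and its far endpoint is $K_d+TP_{n+1}$, not $K_d+K_{n+1}$. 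One can check concretely (e.g.\ $d=2$, $n=2$) that $\y_{K_2+(K_2\mid K_1)}$, $\y_{K_2+P_3}$, $\y_{K_2+K_3}$ do \emph{not} form an arithmetic progression, so using $K_{n+1}$ as the far endpoint cannot give appendable $(e)$-positivity. (Your scheme would prove plain $(e)$-positivity of the $IC_{n+1}^{(k)}$, since Proposition~\ref{prop:(e)avg}(b) has no such boundary condition, but not the appendable statement.) The paper instead applies Corollary~\ref{cor:a(e)avg}(b) only on vertices $n,\dots,2$, with endpoints $G_0=K_n\mid K_1$ and $G_{n-1}=TP_{n+1}$; the appendable $(e)$-positivity of the twin peaks graph $TP_{n+1}$ is the essential extra ingredient, established separately in Proposition~\ref{prop:TP} by an explicit $e$-expansion computation, and it is missing from your argument.

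Two smaller points. Your ``cleaner'' justification that $K_n\mid K_1$ is appendable $(e)$-positive is incorrect: concatenation with $K_1$ is the identity operation ($G+K_1=G$), so $G+(K_n\mid K_1)$ is not $(G+K_n)+K_1$; the correct reason (used in the paper) is that $\y_{G+(K_n\mid K_1)}=\y_{G+K_n}\,e_{(\emptyset,1)}$, which preserves $(e)$-positivity. Finally, handling the reverse graphs via Remark~\ref{rem:rev} is the right idea, but it requires the reverses of the actual endpoints ($K_1\mid K_n$ and $TP_{n+1}$, the latter being self-reverse) to be appendable $(e)$-positive, so it too depends on Proposition~\ref{prop:TP}.
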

\begin{proof}
To prove the appendable $(e)$-positivity of $IC_{n+1}^{(k)}$, {we will} apply Corollary~\ref{cor:a(e)avg}(b) to $K_n\mid K_1$ on vertex $n+1$ and vertices $n,\dots,2$. Figure~\ref{fig:IC} illustrates the case of $n=5$ and $k=2$.
\begin{figure}[H]
\caption{}
\label{fig:IC}
\begin{tikzpicture}
\coordinate (A) at (.75,0);
\coordinate (B) at (1.5,0);
\coordinate (C) at (2.25,0);
\coordinate (D) at (3,0);
\coordinate (E) at (3.75,0);
\coordinate (F) at (4.5,0);
\draw[black] (A)--(B)--(C)--(D)--(E);
\filldraw[black] (A) circle [radius=2pt];
\filldraw[black] (B) circle [radius=2pt] node[below] {\scriptsize$2$};
\filldraw[black] (C) circle [radius=2pt];
\filldraw[black] (E) circle [radius=2pt]  node[below] {\scriptsize $n$};
\filldraw[black] (D) circle [radius=2pt];
\filldraw[black] (F) circle [radius=2pt] node[below] {\scriptsize $n+1$};
\node [] at (2.625,-.7) {$K_n\mid K_1$};
\draw[black] (2.25,0) arc (0:180:.75);
\draw[black] (3,0) arc (0:180:.75);
\draw[black] (3,0) arc (0:180:1.125);
\draw[black] (3.75,0) arc (0:180:.75);
\draw[black] (3.75,0) arc (0:180:1.125);
\draw[black] (3.75,0) arc (0:180:1.5);

\coordinate (A) at (6.25,0);
\coordinate (B) at (7,0);
\coordinate (C) at (7.75,0);
\coordinate (D) at (8.5,0);
\coordinate (E) at (9.25,0);
\coordinate (F) at (10,0);
\draw[black] (A)--(B)--(C)--(D)--(E)--(F);
\filldraw[black] (A) circle [radius=2pt];
\filldraw[black] (B) circle [radius=2pt];
\filldraw[black] (C) circle [radius=2pt];
\filldraw[black] (E) circle [radius=2pt]; 
\filldraw[black] (D) circle [radius=2pt];
\filldraw[black] (F) circle [radius=2pt];
\node [] at (8.125,-.7) {$IC_{n+1}^{(k)}$};
\draw[black] (7.75,0) arc (0:180:.75);
\draw[black] (8.5,0) arc (0:180:.75);
\draw[black] (8.5,0) arc (0:180:1.125);
\draw[black] (9.25,0) arc (0:180:.75);
\draw[black] (9.25,0) arc (0:180:1.125);
\draw[black] (9.25,0) arc (0:180:1.5);
\draw[black] (10,0) arc (0:180:.75);
\draw[black] (10,0) arc (0:180:1.125);

\coordinate (A) at (11.75,0);
\coordinate (B) at (12.5,0);
\coordinate (C) at (13.25,0);
\coordinate (D) at (14,0);
\coordinate (E) at (14.75,0);
\coordinate (F) at (15.5,0);
\draw[black] (A)--(B)--(C)--(D)--(E)--(F);
\filldraw[black] (A) circle [radius=2pt];
\filldraw[black] (B) circle [radius=2pt];
\filldraw[black] (C) circle [radius=2pt];
\filldraw[black] (E) circle [radius=2pt]; 
\filldraw[black] (D) circle [radius=2pt];
\filldraw[black] (F) circle [radius=2pt];
\draw[black] (13.25,0) arc (0:180:.75);
\draw[black] (14,0) arc (0:180:.75);
\draw[black] (14,0) arc (0:180:1.125);
\draw[black] (14.75,0) arc (0:180:.75);
\draw[black] (14.75,0) arc (0:180:1.125);
\draw[black] (14.75,0) arc (0:180:1.5);
\draw[black] (15.5,0) arc (0:180:.75);
\draw[black] (15.5,0) arc (0:180:1.125);
\draw[black] (15.5,0) arc (0:180:1.5);
\node [] at (13.625,-.7) {$TP_{n+1}$};
\end{tikzpicture}
\end{figure}

Note $K_n\mid K_1$ is appendable $(e)$-positive because $K_n$ is appendable $(e)$-positive by Proposition~\ref{prop:Kn} and because for any labelled graph $G$ we have $\y_{G+K_n\mid K_1}=\y_{G+K_n}e_{(\emptyset,1)}$. Additionally, $TP_{n+1}$ is appendable $(e)$-positive by Proposition~\ref{prop:TP}. Therefore, $IC_{n+1}^{(k)}$ is appendable $(e)$-positive by Corollary~\ref{cor:a(e)avg}(b) {setting $G_0=K_n\mid K_1$, $G_{n-1}=TP_{n+1}$ and $i=n+1$.}

Next note that $K_1\mid K_n$, the reverse graph of $K_n\mid K_1$, is appendable $(e)$-positive, because for any labelled graph $G$ we have $\y_{G+K_1\mid K_n}=\y_Ge_{(\emptyset,n)}$. The reverse graph of $TP_{n+1}$, which is $TP_{n+1}$ itself, is appendable $(e)$-positive by Proposition~\ref{prop:TP}. By Remark~\ref{rem:rev}, the reverse graph of $IC_{n+1}^{(k)}$ is also appendable $(e)$-positive.
\end{proof}

\begin{proposition}
\emph{Snowy twin peaks graphs}\label{prop:STP} $STP_{n+1}^{k}$ for $n\ge 3$ and $1\le k\le n-2$, given by the sequence $(m_i)_{i=1}^{n+1}$ where
$$m_i=\begin{cases}k+1&\text{if }i=1,\\
n&\text{if }i=2,\\
n+1&\text{if }3\le i \le n+1,
\end{cases}$$
and their reverse graphs are appendable $(e)$-positive.
\end{proposition}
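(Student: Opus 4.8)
The plan is to realise the graphs $STP_{n+1}^k$, $1\le k\le n-2$, as all but the last term of an arithmetic progression of labelled unit interval graphs obtained by successively adding edges at the first vertex, where the progression is bracketed by two graphs whose appendable $(e)$-positivity is already known; the conclusion then follows from Corollary~\ref{cor:a(e)avg}(a), and the statement for the reverse graphs from Remark~\ref{rem:rev}.

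First I would identify the two bracketing graphs. Reading off the closed neighbourhoods from the sequence $(m_i)$ of $STP_{n+1}^1$, one sees that vertex $1$ is adjacent only to vertex $2$, while $\{2,\dots,n+1\}$ induces a copy of $TP_n$; thus $STP_{n+1}^1=K_2+TP_n$. Since $K_2$ is appendable $(e)$-positive by Proposition~\ref{prop:Kn}, $TP_n$ is appendable $(e)$-positive by Proposition~\ref{prop:TP}, and a concatenation of appendable $(e)$-positive graphs is again appendable $(e)$-positive (apply the definition twice, using that concatenation is associative), it follows that $STP_{n+1}^1$ is appendable $(e)$-positive. For the other bracket, for $0\le j\le n-2$ put $G_j=STP_{n+1}^1+\{1b\mid 3\le b\le 2+j\}$; then $G_j=STP_{n+1}^{1+j}$ for $0\le j\le n-3$, whereas $G_{n-2}$ is $K_{n+1}$ with the edges joining vertex $n+1$ to vertices $1$ and $2$ removed, that is, the melting ice cream scoop graph $IC_{n+1}^{(2)}$, which is appendable $(e)$-positive by Proposition~\ref{prop:IC}.

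Next I would check that Corollary~\ref{cor:a(e)avg}(a) applies to $G=STP_{n+1}^1$ with the vertex $i=1$ and the $n-2$ vertices $3,\dots,n$. Indeed, $m_1=2$ and $m_2=n$, so $m_1+1=3\le m_2$ (here we use $n\ge 3$); every vertex $b$ with $3\le b\le n$ is adjacent to vertex $2$ but not to vertex $1$ in $STP_{n+1}^1$, so $(w_b,m_b)=(2,n+1)$ for each such $b$; and $m_1+(n-2)=n<n+1=|STP_{n+1}^1|$. Hence Corollary~\ref{cor:a(e)avg}(a), applied with $G_0=STP_{n+1}^1$ and $G_{n-2}=IC_{n+1}^{(2)}$ both appendable $(e)$-positive, shows every $G_j$ is appendable $(e)$-positive; in particular $STP_{n+1}^k=G_{k-1}$ is appendable $(e)$-positive for $1\le k\le n-2$. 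For the reverse graphs I would observe that $(STP_{n+1}^1)^r=TP_n+K_2$ (reversing a concatenation reverses the order of the factors and reverses each factor, and $TP_n^r=TP_n$), which is appendable $(e)$-positive as before, while $(IC_{n+1}^{(2)})^r$ is appendable $(e)$-positive by Proposition~\ref{prop:IC}; then by Remark~\ref{rem:rev}, applying Corollary~\ref{cor:a(e)avg}(b) to the reversed data yields that every $(G_j)^r$, and in particular $(STP_{n+1}^k)^r$ for $1\le k\le n-2$, is appendable $(e)$-positive.

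I do not anticipate a substantive obstacle: the only real content is recognising the two bracketing graphs $K_2+TP_n$ and $IC_{n+1}^{(2)}$, after which Propositions~\ref{prop:Kn}, \ref{prop:TP} and \ref{prop:IC} apply directly. The step most prone to error is verifying the $(w,m)$-equality hypothesis of Corollary~\ref{cor:a(e)avg} and confirming that the terminal graph $G_{n-2}$ is precisely $IC_{n+1}^{(2)}$ and not some other member of that family; I would double-check this by writing out the sequences $(m_i)$ and $(w_i)$ explicitly in small cases, say $n\in\{3,4,5\}$.
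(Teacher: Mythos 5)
Your proposal is correct and follows essentially the same route as the paper: it applies Corollary~\ref{cor:a(e)avg}(a) to $K_2+TP_n$ at vertex $1$ with vertices $3,\dots,n$, bracketing the family by $G_0=K_2+TP_n$ (appendable $(e)$-positive by Propositions~\ref{prop:Kn} and~\ref{prop:TP}) and $G_{n-2}=IC_{n+1}^{(2)}$ (Proposition~\ref{prop:IC}), then handles the reverse graphs via Remark~\ref{rem:rev} exactly as the paper does. Your extra verifications (that $STP_{n+1}^1=K_2+TP_n$, that $(w_b,m_b)=(2,n+1)$ for $3\le b\le n$, and that $G_{n-2}=IC_{n+1}^{(2)}$) are correct details the paper leaves implicit.
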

\begin{proof}
To prove the appendable $(e)$-positivity of $STP_{n+1}^k$, {we will} apply Corollary~\ref{cor:a(e)avg}(a) to $K_2+TP_n$ on vertex $1$ with vertices $3,\dots,n$. Figure~\ref{fig:STP} illustrates the case of $n=6$ and $k=3$.
\begin{figure}[H]
\caption{}
\label{fig:STP}
\begin{tikzpicture}
\coordinate (A) at (.75,0);
\coordinate (B) at (1.5,0);
\coordinate (C) at (2.25,0);
\coordinate (D) at (3,0);
\coordinate (E) at (3.75,0);
\coordinate (F) at (4.5,0);
\coordinate (G) at (5.25,0);
\draw[black] (A)--(B)--(C)--(D)--(E)--(F)--(G);

\draw[black] (3,0) arc (0:180:.75);
\draw[black] (3.75,0) arc (0:180:.75);
\draw[black] (3.75,0) arc (0:180:1.125);
\draw[black] (4.5,0) arc (0:180:.75);
\draw[black] (4.5,0) arc (0:180:1.125);
\draw[black] (4.5,0) arc (0:180:1.5);
\draw[black] (5.25,0) arc (0:180:.75);
\draw[black] (5.25,0) arc (0:180:1.125);
\draw[black] (5.25,0) arc (0:180:1.5);
\filldraw[black] (A) circle [radius=2pt] node[below] {\scriptsize$1$};
\filldraw[black] (B) circle [radius=2pt];
\filldraw[black] (C) circle [radius=2pt] node[below] {\scriptsize $3$};
\filldraw[black] (E) circle [radius=2pt] ;
\filldraw[black] (G) circle [radius=2pt];
\filldraw[black] (D) circle [radius=2pt];
\filldraw[black] (F) circle [radius=2pt] node[below] {\scriptsize $n$};
\node [] at (3,-.7) {$K_{2}+TP_n$};

\coordinate (A) at (6.25,0);
\coordinate (B) at (7,0);
\coordinate (C) at (7.75,0);
\coordinate (D) at (8.5,0);
\coordinate (E) at (9.25,0);
\coordinate (F) at (10,0);
\coordinate (G) at (10.75,0);
\draw[black] (A)--(B)--(C)--(D)--(E)--(F)--(G);
\draw[black] (7.75,0) arc (0:180:.75);
\draw[black] (8.5,0) arc (0:180:.75);
\draw[black] (9.25,0) arc (0:180:.75);
\draw[black] (9.25,0) arc (0:180:1.125);
\draw[black] (10,0) arc (0:180:.75);
\draw[black] (10,0) arc (0:180:1.125);
\draw[black] (10,0) arc (0:180:1.5);
\draw[black] (10.75,0) arc (0:180:.75);
\draw[black] (10.75,0) arc (0:180:1.125);
\draw[black] (10.75,0) arc (0:180:1.5);
\draw[black] (8.5,0) arc (0:180:1.125);
\filldraw[black] (A) circle [radius=2pt];
\filldraw[black] (B) circle [radius=2pt];
\filldraw[black] (C) circle [radius=2pt];
\filldraw[black] (E) circle [radius=2pt];
\filldraw[black] (G) circle [radius=2pt];
\filldraw[black] (D) circle [radius=2pt];
\filldraw[black] (F) circle [radius=2pt];
\node [] at (8.5,-.7) {$STP^k_{n+1}$};

\coordinate (A) at (11.75,0);
\coordinate (B) at (12.5,0);
\coordinate (C) at (13.25,0);
\coordinate (D) at (14,0);
\coordinate (E) at (14.75,0);
\coordinate (F) at (15.5,0);
\coordinate (G) at (16.25,0);
\draw[black] (A)--(B)--(C)--(D)--(E)--(F)--(G);
\draw[black] (13.25,0) arc (0:180:.75);
\draw[black] (14,0) arc (0:180:.75);
\draw[black] (14.75,0) arc (0:180:.75);
\draw[black] (14.75,0) arc (0:180:1.125);
\draw[black] (15.5,0) arc (0:180:.75);
\draw[black] (15.5,0) arc (0:180:1.125);
\draw[black] (15.5,0) arc (0:180:1.5);
\draw[black] (15.5,0) arc (0:180:1.875);
\draw[black] (16.25,0) arc (0:180:.75);
\draw[black] (16.25,0) arc (0:180:1.125);
\draw[black] (16.25,0) arc (0:180:1.5);
\draw[black] (14.75,0) arc (0:180:1.5);
\draw[black] (14,0) arc (0:180:1.125);
\filldraw[black] (A) circle [radius=2pt];
\filldraw[black] (B) circle [radius=2pt];
\filldraw[black] (C) circle [radius=2pt];
\filldraw[black] (E) circle [radius=2pt];
\filldraw[black] (G) circle [radius=2pt];
\filldraw[black] (D) circle [radius=2pt];
\filldraw[black] (F) circle [radius=2pt];
\node [] at (14,-.7) {$IC_{n+1}^{(2)}$};
\end{tikzpicture}
\end{figure}

Note $K_2+TP_n$ is appendable $(e)$-positive because both $K_2$ and $TP_n$ are appendable $(e)$-positive by Propositions~\ref{prop:Kn} and \ref{prop:TP}, respectively, and so for any $(e)$-positive labelled graph $G$, the labelled graph $G+K_2$ is $(e)$-positive, and therefore the labelled graph $G+K_2+TP_n$ is $(e)$-positive. Additionally, $IC_{n+1}^{(2)}$ is appendable $(e)$-positive by Proposition~\ref{prop:IC}. Therefore, $STP_{n+1}^k$ is appendable $(e)$-positive {by applying Corollary~\ref{cor:a(e)avg}(a) setting $G_0=K_2+TP_n$, $G_{n-2}=IC_{n+1}^{(2)}$ and $i=1$.}

Next note that $TP_n+K_2$, the reverse graph of $K_2+TP_n$, is appendable $(e)$-positive because $TP_n$ and $K_2$ are appendable $(e)$-positive by Propositions~\ref{prop:TP} and \ref{prop:Kn}, and the reverse graph of $IC^{(2)}_{n+1}$ is appendable $(e)$-positive by Proposition~\ref{prop:IC}. By Remark~\ref{rem:rev}, the reverse graph of $STP_{n+1}^k$ is also appendable $(e)$-positive.
\end{proof}

\begin{proposition}
\emph{Wide melting lollipop graphs} $WL_{m,n}^{(k)}$ for $m\ge 4$, $n\ge 0$ and $1\le k \le m-2$, given by the sequence $(w_i)_{i=1}^{m+n}$ where
$$w_i=\begin{cases}
1&\text{if $1\le i\le m-1,$}\\
k+1&\text{if $i=m,$}\\
i-2&\text{if $m+1\le i\le m+n,$}
\end{cases}$$
are $(e)$-positive.
\end{proposition}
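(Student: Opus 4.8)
The plan is to realise $\y_{WL_{m,n}^{(k)}}$ as a point on the line segment joining $\y_{WL_{m,n}^{(m-2)}}$ and $\y_{WL_{m,n}^{(0)}}$ by an application of Proposition~\ref{prop:(e)avg}(b), where $WL_{m,n}^{(0)}$ denotes the graph obtained from the displayed formula with $k=0$. First I would record the closed neighbourhood structure of $WL_{m,n}^{(k)}$: its sequence $(m_i)_{i=1}^{m+n}$ satisfies $m_i=m-1$ for $1\le i\le k$, $m_i=m$ for $k+1\le i\le m-2$, and $m_i=\min\{i+2,m+n\}$ for $i\ge m-1$. From this one reads off that $WL_{m,n}^{(m-2)}=K_{m-1}+TL_{n+2}$, which is $(e)$-positive because $K_{m-1}$ and $TL_{n+2}$ are appendable $(e)$-positive by Propositions~\ref{prop:Kn} and \ref{prop:TL}. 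Applying Proposition~\ref{prop:(e)avg}(b) to $WL_{m,n}^{(m-2)}$ with the vertex $i=m$ and parameter $m-2$ — the hypotheses hold since $w_m-1=m-2\ge 1=w_{m-1}$ and the vertices $1,\dots,m-2$ all have closed neighbourhood $\{1,\dots,m-1\}$ — produces an arithmetic progression $\y_{G_0},\dots,\y_{G_{m-2}}$ with $G_j=WL_{m,n}^{(m-2-j)}$. Consequently, as soon as $G_0=WL_{m,n}^{(m-2)}$ and $G_{m-2}=WL_{m,n}^{(0)}$ are known to be $(e)$-positive, so is every $WL_{m,n}^{(k)}$ with $1\le k\le m-2$.

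It remains to prove that $WL_{m,n}^{(0)}$ is $(e)$-positive, and I would argue by induction on $n$. When $n=0$ we have $WL_{m,0}^{(0)}=K_m$, which is $(e)$-positive by Proposition~\ref{prop:Kn}. For the step with $n\ge 1$, one checks from the neighbourhood sequences that $WL_{m,n}^{(0)}$ is obtained from $K_m+TL_{n+1}$ by adding the single edge joining vertices $m-1$ and $m+1$. I would then apply Proposition~\ref{prop:(e)avg}(b) to the labelled unit interval graph $K_m+TL_{n+1}$ with the vertex $i=m+1$ and parameter $m-1$; the hypotheses hold because in $K_m+TL_{n+1}$ the vertex $m+1$ has smallest neighbour $m$, while the vertices $1,\dots,m-1$ all have closed neighbourhood $\{1,\dots,m\}$. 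The resulting arithmetic progression $\y_{G_0},\dots,\y_{G_{m-1}}$ has $G_0=K_m+TL_{n+1}$, $G_1=WL_{m,n}^{(0)}$, and $G_{m-1}$ the graph obtained by moreover joining $m+1$ to each of $1,\dots,m-1$, which one checks equals $WL_{m+1,n-1}^{(0)}$. Since $G_0$ is $(e)$-positive as a concatenation of appendable $(e)$-positive graphs and $G_{m-1}=WL_{m+1,n-1}^{(0)}$ is $(e)$-positive by the inductive hypothesis, it follows that $WL_{m,n}^{(0)}=G_1$ is $(e)$-positive.

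I expect the main obstacle to be exactly this treatment of $WL_{m,n}^{(0)}$: unlike the other families in this section it is not a concatenation of complete graphs and triangular ladders — rather it is a copy of $K_m$ overlapping a copy of $TL_{n+2}$ in two vertices — so its $(e)$-positivity does not come from known results, and the induction has to be set up so that each arithmetic progression absorbs one tail vertex into the clique, sending the parameters $(m,n)$ to $(m+1,n-1)$, until the base case $K_{m+n}$ is reached. The work lies in confirming that the endpoint graphs $G_1$ and $G_{m-1}$ of each progression are exactly $WL_{m,n}^{(0)}$ and $WL_{m+1,n-1}^{(0)}$, and that the window-equality hypotheses of Proposition~\ref{prop:(e)avg}(b) are satisfied throughout. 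I would also note that the method gives only $(e)$-positivity and not appendable $(e)$-positivity of $WL_{m,n}^{(k)}$, because in both applications of Proposition~\ref{prop:(e)avg}(b) the value of $w_i$ minus the parameter equals $1$, so the appendable analogue Corollary~\ref{cor:a(e)avg}(b), which demands this value exceed $1$, cannot be used.
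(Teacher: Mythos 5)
Your argument is correct, and its main reduction is the same as the paper's: the arithmetic progression obtained by applying Proposition~\ref{prop:(e)avg}(b) at vertex $m$ of $WL_{m,n}^{(m-2)}=K_{m-1}+TL_{n+2}$ with parameter $m-2$ is exactly the progression the paper uses, with $WL_{m,n}^{(k)}$ as the interior point $G_{m-2-k}$. Where you genuinely differ is in how the far endpoint is handled. The paper observes that this endpoint is, as a labelled graph, $WL_{m+1,n-1}^{(m-2)}$ (your $WL_{m,n}^{(0)}$ is the same graph written with shifted parameters), so it closes a single induction on $n$ whose base case $WL_{m,0}^{(k)}=IC_m^{(k)}$ rests on Proposition~\ref{prop:IC} and hence ultimately on the twin peaks computation of Proposition~\ref{prop:TP}. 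You instead run a second induction on $n$ for the $k=0$ graphs, using the progression built from $K_m+TL_{n+1}$ at vertex $m+1$ with parameter $m-1$, whose endpoints $K_m+TL_{n+1}$ and $WL_{m+1,n-1}^{(0)}$ are handled by Propositions~\ref{prop:Kn} and \ref{prop:TL} and the inductive hypothesis, with base case $K_m$; for $n=0$ your first progression (from $K_{m-1}+K_2$ to $K_m$) even re-derives the $(e)$-positivity of $IC_m^{(k)}$ directly. This buys a proof depending only on Propositions~\ref{prop:Kn}, \ref{prop:TL} and \ref{prop:(e)avg}, bypassing the melting ice cream scoop and twin peaks results, at the cost of one extra progression; the paper's route is shorter given that Proposition~\ref{prop:IC} has already been established. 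Your identifications of the relevant graphs ($WL_{m,n}^{(m-2)}=K_{m-1}+TL_{n+2}$, $G_1=WL_{m,n}^{(0)}$, $G_{m-1}=WL_{m+1,n-1}^{(0)}$), your verification of the window hypotheses, and your closing remark that Corollary~\ref{cor:a(e)avg}(b) cannot be invoked because $w_i$ minus the parameter equals $1$ in both applications (so only $(e)$-positivity, not appendable $(e)$-positivity, follows) are all accurate and consistent with what the paper claims.
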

\begin{proof}
{We will} proceed by induction on $n$. For the base case $n=0$, note {that in Proposition~\ref{prop:IC} we can equivalently define $IC_m^{(k)}$ to be given by the sequence $(w_i)_{i=1}^m$ where}
$${w_i=\begin{cases}1&\text{if }1\le i \le m-1,\\ k+1&\text{if }i=m.
\end{cases}}$$
{Hence} $WL_{m,0}^{(k)}=IC_m^{(k)}$, which is $(e)$-positive by Proposition~\ref{prop:IC}.

For $n>0$, {we will} apply Corollary~\ref{cor:a(e)avg}(b) to $K_{m-1}+TL_{{n+2}}$ on vertex $m$ with vertices $m-2,\dots,1$. Figure~\ref{fig:WL} illustrates the case of $m=5$, $n=2$ and $k=1$.

\begin{figure}[H]
\caption{}
\label{fig:WL}
\begin{tikzpicture}
\coordinate (A) at (.75,0);
\coordinate (B) at (1.5,0);
\coordinate (C) at (2.25,0);
\coordinate (D) at (3,0);
\coordinate (E) at (3.75,0);
\coordinate (F) at (4.5,0);
\coordinate (G) at (5.25,0);
\draw[black] (A)--(B)--(C)--(D)--(E)--(F)--(G);

\draw[black] (2.25,0) arc (0:180:.75);
\draw[black] (3,0) arc (0:180:.75);
\draw[black] (4.5,0) arc (0:180:.75);
\draw[black] (5.25,0) arc (0:180:.75);
\draw[black] (3,0) arc (0:180:1.125);
\filldraw[black] (A) circle [radius=2pt] node[below] {\scriptsize$1$};
\filldraw[black] (B) circle [radius=2pt];
\filldraw[black] (C) circle [radius=2pt] node[below] {\scriptsize $m-2$};
\filldraw[black] (E) circle [radius=2pt]  node[below] {\scriptsize $m$};
\filldraw[black] (G) circle [radius=2pt];
\filldraw[black] (D) circle [radius=2pt];
\filldraw[black] (F) circle [radius=2pt];
\node [] at (3,-.7) {$K_{m-1}+TL_{{n+2}}$};

\coordinate (A) at (6.25,0);
\coordinate (B) at (7,0);
\coordinate (C) at (7.75,0);
\coordinate (D) at (8.5,0);
\coordinate (E) at (9.25,0);
\coordinate (F) at (10,0);
\coordinate (G) at (10.75,0);
\draw[black] (A)--(B)--(C)--(D)--(E)--(F)--(G);
\draw[black] (7.75,0) arc (0:180:.75);
\draw[black] (8.5,0) arc (0:180:.75);
\draw[black] (10,0) arc (0:180:.75);
\draw[black] (10.75,0) arc (0:180:.75);
\draw[black] (9.25,0) arc (0:180:.75);
\draw[black] (9.25,0) arc (0:180:1.125);
\draw[black] (8.5,0) arc (0:180:1.125);
\filldraw[black] (A) circle [radius=2pt];
\filldraw[black] (B) circle [radius=2pt];
\filldraw[black] (C) circle [radius=2pt];
\filldraw[black] (E) circle [radius=2pt];
\filldraw[black] (G) circle [radius=2pt];
\filldraw[black] (D) circle [radius=2pt];
\filldraw[black] (F) circle [radius=2pt];
\node [] at (8.5,-.7) {$WL_{m,n}^{(k)}$};

\coordinate (A) at (11.75,0);
\coordinate (B) at (12.5,0);
\coordinate (C) at (13.25,0);
\coordinate (D) at (14,0);
\coordinate (E) at (14.75,0);
\coordinate (F) at (15.5,0);
\coordinate (G) at (16.25,0);
\draw[black] (A)--(B)--(C)--(D)--(E)--(F)--(G);
\draw[black] (13.25,0) arc (0:180:.75);
\draw[black] (14,0) arc (0:180:.75);
\draw[black] (15.5,0) arc (0:180:.75);
\draw[black] (16.25,0) arc (0:180:.75);
\draw[black] (14.75,0) arc (0:180:.75);
\draw[black] (14.75,0) arc (0:180:1.125);
\draw[black] (14.75,0) arc (0:180:1.5);
\draw[black] (14,0) arc (0:180:1.125);
\filldraw[black] (A) circle [radius=2pt];
\filldraw[black] (B) circle [radius=2pt];
\filldraw[black] (C) circle [radius=2pt];
\filldraw[black] (E) circle [radius=2pt];
\filldraw[black] (G) circle [radius=2pt];
\filldraw[black] (D) circle [radius=2pt];
\filldraw[black] (F) circle [radius=2pt];
\node [] at (14,-.7) {$WL_{m+1,n-1}^{(m-2)}$};
\end{tikzpicture}
\end{figure}

Note $K_{m-1}+TL_{{n+2}}$ is $(e)$-positive because triangular ladders are appendable $(e)$-positive by Proposition~\ref{prop:TL}, and $WL_{m+1,n-1}^{(m-2)}$ is $(e)$-positive by the inductive hypothesis. Therefore since
$$\y_{WL_{m,n}^{(k)}} = \frac{k}{m-2}\y_{K_{m-1}+TL_{{n+2}}} + \frac{m-k-2}{m-2}\y_{WL_{m+1,n-1}^{(m-2)}},$$
the labelled graph $WL_{m,n}^{(k)}$ is $(e)$-positive.
\end{proof}

\begin{proposition}
Wide melting lollipop graphs $WL_{m,1}^{(k)}$ with $n=1$, for $m\ge 4$ and $1\le k \le m-2$, and their reverse graphs are appendable $(e)$-positive.
\end{proposition}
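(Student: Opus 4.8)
The plan is to realize each $WL_{m,1}^{(k)}$, and each of their reverse graphs, as a term of an arithmetic progression of the kind furnished by Corollary~\ref{cor:a(e)avg}(b) whose two endpoints are already known to be appendable $(e)$-positive, deducing the reverse graphs via Remark~\ref{rem:rev}. A first attempt would be the progression obtained by starting from $K_{m-1}+K_3$ and joining vertex $m$ to the vertices $m-2,m-3,\dots,1$ in turn: one checks from the defining sequences that $K_{m-1}+K_3=WL_{m,1}^{(m-2)}$ (appendable $(e)$-positive, being a concatenation of complete graphs, by Proposition~\ref{prop:Kn}), that the intermediate graphs are precisely the $WL_{m,1}^{(k)}$ for $1\le k\le m-2$, and that after the final edge one lands on $IC_{m+1}^{(m-2)}$ (appendable $(e)$-positive by Proposition~\ref{prop:IC}). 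The catch is that this final edge joins vertex $m$ to vertex $1$, and with $w_m=m-1$ the full range of vertices $1,\dots,m-2$ gives $w_i-k=1$, violating the hypothesis $w_i-k>1$ of Corollary~\ref{cor:a(e)avg}(b). Thus this progression, through Proposition~\ref{prop:(e)avg}(b), only reproves $(e)$-positivity; to obtain appendable $(e)$-positivity one must first show that the truncated endpoint $WL_{m,1}^{(1)}$ (and its reverse) is appendable $(e)$-positive by another route.

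For that, I would apply Corollary~\ref{cor:a(e)avg}(b) to the graph $TP_m+K_2$, joining vertex $m+1$ to the vertices $m-1,m-2,\dots,2$ in turn. The starting graph $TP_m+K_2$ is appendable $(e)$-positive, since $TP_m$ and $K_2$ are (Propositions~\ref{prop:TP} and~\ref{prop:Kn}) and, as is used repeatedly, a concatenation of appendable $(e)$-positive graphs is appendable $(e)$-positive. Tracking the closed neighbourhoods, one finds: in $TP_m+K_2$ we have $w_{m+1}=m$ and the pairs $(w_j,m_j)$ are constant for $2\le j\le m-1$, so the hypotheses of Corollary~\ref{cor:a(e)avg}(b) hold and, on the range $2,\dots,m-1$, $w_i-k=2>1$; the graph reached after joining $m+1$ to all of $2,\dots,m-1$ is the reverse graph of $IC_{m+1}^{(2)}$ (namely $K_{m+1}$ with the two edges from vertex $1$ to vertices $m$ and $m+1$ deleted), which is appendable $(e)$-positive by Proposition~\ref{prop:IC}; and the graph reached after joining $m+1$ to vertex $m-1$ alone is exactly $WL_{m,1}^{(1)}$. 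Hence $WL_{m,1}^{(1)}$ is appendable $(e)$-positive, and Remark~\ref{rem:rev}, whose relevant endpoint reverses are $K_2+TP_m$ and $IC_{m+1}^{(2)}$ (again appendable $(e)$-positive), yields the same for the reverse graph of $WL_{m,1}^{(1)}$.

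Finally I would revisit the first progression, now stopping one step short of $IC_{m+1}^{(m-2)}$: join vertex $m$ of $K_{m-1}+K_3=WL_{m,1}^{(m-2)}$ to the vertices $m-2,m-3,\dots,2$. Both endpoints, $WL_{m,1}^{(m-2)}$ and $WL_{m,1}^{(1)}$, are now appendable $(e)$-positive; on this truncated range $w_i-k=(m-1)-(m-3)=2>1$; and the intermediate graphs are exactly $WL_{m,1}^{(k)}$ for $2\le k\le m-3$. So Corollary~\ref{cor:a(e)avg}(b) gives that $WL_{m,1}^{(k)}$ is appendable $(e)$-positive for every $1\le k\le m-2$, and Remark~\ref{rem:rev}, applied with the endpoint reverses $K_3+K_{m-1}$ and the reverse graph of $WL_{m,1}^{(1)}$, gives the statement for every reverse graph. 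The only real work beyond this two-step bootstrap is the verification that the intermediate graphs of the two progressions are indeed the asserted members of the $WL$ and $IC$ families; this is a routine but slightly delicate computation with the sequences $(w_i)$ and $(m_i)$, and identifying precisely which endpoint the condition $w_i-k>1$ excludes is the point that makes the detour through $WL_{m,1}^{(1)}$ necessary.
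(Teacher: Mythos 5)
Your proof is correct and is essentially the paper's argument: the main step is the same application of Corollary~\ref{cor:a(e)avg}(b) to the progression from $K_{m-1}+K_3$ obtained by joining vertex $m$ to $m-2,\dots,2$, with the reverse graphs handled by Remark~\ref{rem:rev}. The only difference is that where the paper recognizes the far endpoint as $(STP^2_{m+1})^r$ and cites Proposition~\ref{prop:STP}, you re-derive its appendable $(e)$-positivity inline via the progression from $TP_m+K_2$ to $(IC_{m+1}^{(2)})^r$ --- which is precisely the reversed form of the progression used in the paper's proof of Proposition~\ref{prop:STP}, so your ``bootstrap'' correctly reproves the needed special case rather than quoting it.
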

\begin{proof}
To prove the appendable $(e)$-positivity of $WL_{m,1}^{(k)}$, {we will} apply Corollary~\ref{cor:a(e)avg}(b) to $K_{m-1}+K_3$ on vertex $m$ and vertices $m-2,\dots,2$. Figure~\ref{fig:WL1} illustrates the case of $m=6$ and $k=2$.
\begin{figure}[H]
\caption{}
\label{fig:WL1}
\begin{tikzpicture}
\coordinate (A) at (.75,0);
\coordinate (B) at (1.5,0);
\coordinate (C) at (2.25,0);
\coordinate (D) at (3,0);
\coordinate (E) at (3.75,0);
\coordinate (F) at (4.5,0);
\coordinate (G) at (5.25,0);
\draw[black] (A)--(B)--(C)--(D)--(E)--(F)--(G);

\draw[black] (2.25,0) arc (0:180:.75);
\draw[black] (3,0) arc (0:180:.75);
\draw[black] (5.25,0) arc (0:180:.75);
\draw[black] (3.75,0) arc (0:180:.75);
\draw[black] (3.75,0) arc (0:180:1.125);
\draw[black] (3.75,0) arc (0:180:1.5);
\draw[black] (3,0) arc (0:180:1.125);
\filldraw[black] (A) circle [radius=2pt];
\filldraw[black] (B) circle [radius=2pt] node[below] {\scriptsize$2$};
\filldraw[black] (C) circle [radius=2pt];
\filldraw[black] (E) circle [radius=2pt] ;
\filldraw[black] (G) circle [radius=2pt];
\filldraw[black] (D) circle [radius=2pt] node[below] {\scriptsize $m-2$};
\filldraw[black] (F) circle [radius=2pt] node[below] {\scriptsize $m$};
\node [] at (3,-.7) {$K_{m-1}+K_3$};

\coordinate (A) at (6.25,0);
\coordinate (B) at (7,0);
\coordinate (C) at (7.75,0);
\coordinate (D) at (8.5,0);
\coordinate (E) at (9.25,0);
\coordinate (F) at (10,0);
\coordinate (G) at (10.75,0);
\draw[black] (A)--(B)--(C)--(D)--(E)--(F)--(G);
\draw[black] (7.75,0) arc (0:180:.75);
\draw[black] (8.5,0) arc (0:180:.75);
\draw[black] (10,0) arc (0:180:.75);
\draw[black] (10,0) arc (0:180:1.125);
\draw[black] (10.75,0) arc (0:180:.75);
\draw[black] (9.25,0) arc (0:180:.75);
\draw[black] (9.25,0) arc (0:180:1.125);
\draw[black] (9.25,0) arc (0:180:1.5);
\draw[black] (8.5,0) arc (0:180:1.125);
\filldraw[black] (A) circle [radius=2pt];
\filldraw[black] (B) circle [radius=2pt];
\filldraw[black] (C) circle [radius=2pt];
\filldraw[black] (E) circle [radius=2pt];
\filldraw[black] (G) circle [radius=2pt];
\filldraw[black] (D) circle [radius=2pt];
\filldraw[black] (F) circle [radius=2pt];
\node [] at (8.5,-.7) {$WL_{m,1}^{(k)}$};

\coordinate (A) at (11.75,0);
\coordinate (B) at (12.5,0);
\coordinate (C) at (13.25,0);
\coordinate (D) at (14,0);
\coordinate (E) at (14.75,0);
\coordinate (F) at (15.5,0);
\coordinate (G) at (16.25,0);
\draw[black] (A)--(B)--(C)--(D)--(E)--(F)--(G);
\draw[black] (13.25,0) arc (0:180:.75);
\draw[black] (14,0) arc (0:180:.75);
\draw[black] (15.5,0) arc (0:180:.75);
\draw[black] (15.5,0) arc (0:180:1.125);
\draw[black] (15.5,0) arc (0:180:1.5);
\draw[black] (16.25,0) arc (0:180:.75);
\draw[black] (14.75,0) arc (0:180:.75);
\draw[black] (14.75,0) arc (0:180:1.125);
\draw[black] (14.75,0) arc (0:180:1.5);
\draw[black] (14,0) arc (0:180:1.125);
\filldraw[black] (A) circle [radius=2pt];
\filldraw[black] (B) circle [radius=2pt];
\filldraw[black] (C) circle [radius=2pt];
\filldraw[black] (E) circle [radius=2pt];
\filldraw[black] (G) circle [radius=2pt];
\filldraw[black] (D) circle [radius=2pt];
\filldraw[black] (F) circle [radius=2pt];
\node [] at (14,-.7) {$(STP^2_{m+1})^r$};
\end{tikzpicture}
\end{figure}

Note $K_{m-1}+K_3$ is appendable $(e)$-positive because both $K_{m-1}$ and $K_3$ are appendable $(e)$-positive by Proposition~\ref{prop:Kn}, and the reverse graph of $STP_{m+1}^2$ is appendable $(e)$-positive by Proposition~\ref{prop:STP}. Therefore $WL_{m,1}^{(k)}$ is appendable $(e)$-positive {by Corollary~\ref{cor:a(e)avg} setting $G_0=K_{m-1}+K_3$, $G_{m-3}=(STP^2_{m+1})^r$ and $i=m$.}

Next note that $K_3+K_{m-1}$, the reverse graph of $K_{m-1}+K_3$, is appendable $(e)$-positive because both $K_3$ and $K_{m-1}$ are appendable $(e)$-positive by Proposition~\ref{prop:Kn}, and $STP_{m+1}^2$, the reverse graph of $(STP_{m+1}^2)^r$, is appendable $(e)$-positive by Proposition~\ref{prop:STP}. By Remark~\ref{rem:rev}, the reverse graph of $WL_{m,1}^{(k)}$ is also appendable $(e)$-positive.
\end{proof}

\section{The reduction to complete graphs}
\label{sec:complete}
In this section we will highlight a simple but useful idea. For any linear map on $\UBCSym^d$ with a special interpretation when $\y_{G\at v}$ is {substituted} in, we can compute the image of any function in $\UBCSym^d$ by writing the function in {the} $e$-basis and then noting each $e_{(\lambda,b)}=\y_{K_{\lambda_1}\mid\cdots\mid K_{\lambda_{\ell(\lambda)}}\mid K_b}$. In particular, to understand the map on $\UBCSym^d$, it is enough to study its behaviour on disjoint unions of complete graphs.

Our first illustration of this concept is a shorter derivation of the first part of \cite[Corollary 6.1]{GebSag}, which computes the result of the induction operation on the $e$-basis of $\UBCSym$. The second part of \cite[Corollary 6.1]{GebSag} also follows by symmetry.

\begin{lemma}\cite[Corollary 6.1]{GebSag}\label{lem:duct}
For any partition $\lambda$ and positive integer $b$,
$$e_{(\lambda,b)}\duct = \frac{1}{b}e_{({\lambda\pcup(b)},1)}-\frac{1}{b}e_{(\lambda,b+1)}.$$
\end{lemma}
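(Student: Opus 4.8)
The plan is to use the identification $e_{(\lambda,b)} = \y_{K_{\lambda_1}\mid\cdots\mid K_{\lambda_{\ell(\lambda)}}\mid K_b}$, reduce to the case $\lambda=\emptyset$, and then do the whole computation in the $m$-basis of $\UBCSym$. Since $\projUBC$ is an algebra homomorphism and $e_\pi e_\sigma = e_{\pi\slashp\sigma}$, applying $\projUBC$ and reading off the size of the block of $\pi\slashp\sigma$ containing its largest element gives a product rule for the $e$-basis of $\UBCSym$: $e_{(\mu,a)}e_{(\nu,c)} = e_{(\mu\pcup(a)\pcup\nu,\,c)}$. Iterating this yields $e_{(\lambda,b)} = e_{(\emptyset,\lambda_1)}\cdots e_{(\emptyset,\lambda_{\ell(\lambda)})}\,e_{(\emptyset,b)}$. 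Moreover, because $\duct$ merely appends a copy of the last variable, one has $(fg)\duct = f(g\duct)$ in $\NCSym$ whenever $g$ has positive degree, and since $\projUBC$ is a homomorphism with $\projUBC(h\duct)=\projUBC(h)\duct$, this descends to $\UBCSym$. Hence $e_{(\lambda,b)}\duct = \bigl(e_{(\emptyset,\lambda_1)}\cdots e_{(\emptyset,\lambda_{\ell(\lambda)})}\bigr)\bigl(e_{(\emptyset,b)}\duct\bigr)$, so it suffices to prove the case $\lambda=\emptyset$, namely $e_{(\emptyset,b)}\duct = \tfrac1b e_{((b),1)}-\tfrac1b e_{(\emptyset,b+1)}$; the product rule then upgrades this to the general statement, using $e_{(\emptyset,\lambda_1)}\cdots e_{(\emptyset,\lambda_{\ell(\lambda)})}e_{((b),1)} = e_{(\lambda\pcup(b),1)}$ and $e_{(\emptyset,\lambda_1)}\cdots e_{(\emptyset,\lambda_{\ell(\lambda)})}e_{(\emptyset,b+1)} = e_{(\lambda,b+1)}$.

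To compute $e_{(\emptyset,b)}\duct$, I would first note $e_{(\emptyset,b)} = \y_{K_b} = \projUBC(Y_{K_b})$, and that $Y_{K_b}$, being the sum of $x_{\kappa(1)}\cdots x_{\kappa(b)}$ over all injective $\kappa\colon[b]\to\bZ^+$, equals $m_\sigma$ where $\sigma$ is the set partition of $[b]$ into singletons. Next I would establish the $m$-basis formula $m_\pi\duct = m_{\pi^+}$ for any $\pi\vdash[d]$, where $\pi^+\vdash[d+1]$ is obtained from $\pi$ by adjoining $d+1$ to the block containing $d$: appending a copy of the $d$th coordinate sends a tuple whose equality pattern is exactly $\pi$ to one whose equality pattern is exactly $\pi^+$, and every tuple of pattern $\pi^+$ arises this way. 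Applying $\projUBC$ then gives $e_{(\emptyset,b)}\duct = \projUBC(m_\sigma\duct) = \projUBC(m_{\sigma^+}) = m_{((1^{b-1}),2)}$, since $\sigma^+$ has blocks $\{1\},\dots,\{b-1\},\{b,b+1\}$ and hence type $((1^{b-1}),2)$.

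The last step is to rewrite $m_{((1^{b-1}),2)}$ in the $e$-basis of $\UBCSym$. On one hand, $Y_{K_{b+1}} = m_{\{1\}/\cdots/\{b+1\}}$ gives $e_{(\emptyset,b+1)} = \projUBC(Y_{K_{b+1}}) = m_{((1^b),1)}$. On the other hand, writing $e_{\{1,\dots,b\}/\{b+1\}} = e_{\{1,\dots,b\}}\,e_{\{1\}} = m_{\{1\}/\cdots/\{b\}}\cdot\bigl(\sum_j x_j\bigr)$ and grouping the tuples $(i_1,\dots,i_b,j)$ (with $i_1,\dots,i_b$ distinct) according to whether $j\notin\{i_1,\dots,i_b\}$ or $j=i_k$ for a unique $k\in[b]$, one gets $e_{\{1,\dots,b\}/\{b+1\}} = m_{\{1\}/\cdots/\{b+1\}} + \sum_{k=1}^{b} m_{\tau_k}$, where $\tau_k$ is the set partition of $[b+1]$ whose only non-singleton block is $\{k,b+1\}$. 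Each $\tau_k$ has type $((1^{b-1}),2)$, so applying $\projUBC$ yields $e_{((b),1)} = m_{((1^b),1)} + b\,m_{((1^{b-1}),2)}$; solving gives $m_{((1^{b-1}),2)} = \tfrac1b\bigl(e_{((b),1)}-e_{(\emptyset,b+1)}\bigr)$, which is the $\lambda=\emptyset$ case, and the first paragraph finishes the proof. There is no deep obstacle here — the argument just assembles the homomorphism property of $\projUBC$, the identity $m_\pi\duct = m_{\pi^+}$, and two elementary $m$-to-$e$ expansions — but the final change of basis is where care is needed, since the multiplicity $b$ (and hence the coefficient $\tfrac1b$) must be tracked correctly.
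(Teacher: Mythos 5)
Your proof is correct, but it follows a genuinely different route from the paper's. The paper's derivation is deliberately graph-theoretic, as an illustration of the reduction-to-complete-graphs technique of Section~\ref{sec:complete}: it identifies $e_{(\lambda,b)}=\y_{K_{\lambda_1}\mid\cdots\mid K_{\lambda_{\ell(\lambda)}}\mid K_b}$, applies deletion--contraction (Proposition~\ref{prop:delcon}) to write $e_{(\lambda,b)}\duct$ as $\y_{K_{\lambda_1}\mid\cdots\mid K_b\mid K_1}-\y_{K_{\lambda_1}\mid\cdots\mid K_b+K_2}$, and then evaluates $\y_{K_b+K_2}=\frac{b-1}{b}e_{((b),1)}+\frac{1}{b}e_{(\emptyset,b+1)}$ via the arithmetic-progression relation of Proposition~\ref{prop:(e)avg}(b). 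You instead work entirely inside $\NCSym$ and $\UBCSym$: the product rule $e_{(\mu,a)}e_{(\nu,c)}=e_{(\mu\pcup(a)\pcup\nu,\,c)}$ (valid because $\ker\projUBC$ is a two-sided ideal and $e_\pi e_\sigma=e_{\pi\slashp\sigma}$), the compatibility $(fg)\duct=f(g\duct)$ for $g$ of positive degree together with $\projUBC(h\duct)=\projUBC(h)\duct$, the identity $m_\pi\duct=m_{\pi^+}$, and the change of basis $e_{((b),1)}=m_{((1^b),1)}+b\,m_{((1^{b-1}),2)}$ are all correct, including the multiplicity $b$ coming from the $b$ set partitions $\tau_k$ of type $((1^{b-1}),2)$, which is exactly where the coefficient $\frac{1}{b}$ originates. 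What each approach buys: yours is self-contained, needing neither deletion--contraction nor Proposition~\ref{prop:(e)avg}, and is closer in spirit to Gebhard and Sagan's original computation; the paper's is shorter given the machinery already established and serves its expository purpose of showing that linear maps on $\UBCSym^d$ can be evaluated by testing on disjoint unions of complete graphs.
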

\begin{proof}
By Proposition~\ref{prop:delcon}, deletion-contraction, $$\y_{K_{\lambda_1}\mid\cdots\mid K_{\lambda_{\ell(\lambda)}}\mid K_b+K_2}=\y_{K_{\lambda_1}\mid\cdots\mid K_{\lambda_{\ell(\lambda)}}\mid K_b\mid K_1}-\y_{K_{\lambda_1}\mid\cdots\mid K_{\lambda_{\ell(\lambda)}}\mid K_b}\duct.$$ Therefore,
$$e_{(\lambda,b)}\duct=\y_{K_{\lambda_1}\mid\cdots\mid K_{\lambda_{\ell(\lambda)}}\mid K_b}\duct=\y_{K_{\lambda_1}\mid\cdots\mid K_{\lambda_{\ell(\lambda)}}\mid K_b\mid K_1} - \y_{K_{\lambda_1}\mid\cdots\mid K_{\lambda_{\ell(\lambda)}}\mid K_b+K_2}.$$ To compute $\y_{K_{\lambda_1}\mid\cdots\mid K_{\lambda_{\ell(\lambda)}}\mid K_b+K_2}=e_{((\lambda_1,\dots,\lambda_{\ell(\lambda)-1}),\lambda_{\ell(\lambda)})} \y_{K_b+K_2}$, we note by Proposition~\ref{prop:(e)avg}(b) (applied to $K_b\mid K_1$ and vertex $b+1$ together with vertices $b,\dots,1$) that $$\y_{K_b+K_2}= \frac{b-1}{b}\y_{K_b\mid K_1}+\frac{1}{b}\y_{K_{b+1}}=\frac{b-1}{b}e_{((b),1)}+\frac{1}{b}e_{(\emptyset,b+1)}.$$

{Hence we} conclude that
\begin{align*}
e_{(\lambda,b)}\duct&=\y_{K_{\lambda_1}\mid\cdots\mid K_{\lambda_{\ell(\lambda)}}\mid K_b\mid K_1} - \y_{K_{\lambda_1}\mid\cdots\mid K_{\lambda_{\ell(\lambda)}}\mid K_b+K_2}\\
&=e_{(\lambda\pcup (b),1)}-\left( \frac{b-1}{b}e_{({\lambda\pcup (b)},1)}+\frac{1}{b}e_{(\lambda,b+1)} \right) \\
&=\frac{1}{b}e_{(\lambda\pcup(b),1)}-\frac{1}{b}e_{(\lambda,b+1)}.
\end{align*}\end{proof}

To give more applications of the idea introduced in this section, we will require more linear maps on $\UBCSym^d$ with special interpretations when $\y_{G\at v}$ is {substituted} in. Our next result shows for any labelled graph $H$ that the map sending each $\y_G\mapsto \y_{G+H}$ is linear, which also explains why graph concatenation is natural to study in $\UBCSym$.

\begin{theorem}\label{the:linear}
If $H$ is a labelled graph, then for every $d\in\mathbb Z_{>0}$, there exists a linear map $\overline{T_H}:\UBCSym^d\to \UBCSym^{d+|H|-1}$ sending each $\y_G\mapsto \y_{G+H}$ for all labelled graphs $G$ on $d$ vertices.
\end{theorem}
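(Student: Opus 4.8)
The plan is to realise $\overline{T_H}$ as a linear map induced through the projection $\projUBC$ from an explicit linear map on $\NCSym^d$, using that $\{e_\pi\}_{\pi\vdash[d]}$ is a basis of $\NCSym^d$ and that $Y_{K_\pi}=e_\pi$. The starting observation is the identity $Y_{G+H}=Y_G\odot Y_H$, where $\odot$ is the bilinear \emph{gluing product} sending a pair of monomials $x_{i_1}\cdots x_{i_d}$, $x_{j_1}\cdots x_{j_m}$ to $x_{i_1}\cdots x_{i_d}x_{j_2}\cdots x_{j_m}$ when $i_d=j_1$ and to $0$ otherwise. This is immediate from the definition of concatenation: a proper colouring of $G+H$ is precisely a pair consisting of a proper colouring of $G$ and one of $H$ that agree on the identified vertex --- which is the last vertex of $G$ and the first vertex of $H$ --- and the corresponding monomials combine under $\odot$ exactly because of that agreement.

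Fixing $H$ and $d$, I would then define $S_H\colon\NCSym^d\to\NCSym^{d+|H|-1}$ to be the linear map determined on the basis $\{e_\pi\}_{\pi\vdash[d]}$ by $S_H(e_\pi)=Y_{K_\pi+H}$; note $Y_{K_\pi+H}=Y_{K_\pi}\odot Y_H=e_\pi\odot Y_H$ by the identity above applied to $G=K_\pi$. Writing a general $Y_G=\sum_\pi c_\pi e_\pi=\sum_\pi c_\pi Y_{K_\pi}$ and invoking bilinearity of $\odot$ gives $S_H(Y_G)=\sum_\pi c_\pi\,Y_{K_\pi}\odot Y_H=Y_G\odot Y_H=Y_{G+H}$, so $S_H$ does send $Y_G\mapsto Y_{G+H}$ for every labelled graph $G$ on $d$ vertices.

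The crux is to show that $S_H$ descends along $\projUBC$, i.e. that $S_H(\ker\projUBC)\subseteq\ker\projUBC$. Since $\ker\projUBC=\spam\{e_{\pi_1}-e_{\pi_2}\mid\type(\pi_1)=\type(\pi_2)\}$, it is enough to verify $\projUBC(S_H(e_{\pi_1}))=\projUBC(S_H(e_{\pi_2}))$, that is $\y_{K_{\pi_1}+H}=\y_{K_{\pi_2}+H}$, whenever $\type(\pi_1)=\type(\pi_2)$. For this I would use the fact recorded in the excerpt that $\type(\pi_1)=\type(\pi_2)$ forces $\pi_2=\delta(\pi_1)$ for some $\delta\in\Sg_d$ fixing $d$, so that $K_{\pi_2}=\delta(K_{\pi_1})$; since $\delta$ fixes the last vertex of $K_{\pi_1}$, which is exactly the vertex glued to $H$, the graph $K_{\pi_2}+H$ is obtained from $K_{\pi_1}+H$ by a relabelling in $\Sg_{d+|H|-1}$ fixing its last vertex, whence $\y_{K_{\pi_2}+H}=\y_{K_{\pi_1}+H}$ by the invariance $\projUBC(\delta\circ f)=\projUBC(f)$ for $\delta$ fixing the last coordinate together with $Y_{\delta(\Gamma)}=\delta\circ Y_\Gamma$. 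This produces the induced linear map $\overline{T_H}\colon\UBCSym^d\to\UBCSym^{d+|H|-1}$ with $\overline{T_H}\circ\projUBC=\projUBC\circ S_H$, and then $\overline{T_H}(\y_G)=\overline{T_H}(\projUBC(Y_G))=\projUBC(S_H(Y_G))=\projUBC(Y_{G+H})=\y_{G+H}$, as desired (uniqueness, if wanted, is automatic since the $\y_{K_\lambda\mid K_b}$ already span $\UBCSym^d$).

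I expect the only real friction to be bookkeeping. In the first step one must check carefully that the relabelling built into the definition of $G+H$ genuinely implements $\odot$ at the monomial level --- tracking which position of $[\,|G|+|H|-1\,]$ corresponds to which vertex of $G$ and of $H$, and in particular that position $|G|$ carries the colour of the glued vertex --- and in the third step one must check that a permutation of $[d]$ fixing $d$ extends to a permutation of $[d+|H|-1]$ fixing the last vertex so that the invariance lemma applies. Both are routine once the indexing conventions from the definition of concatenation are made explicit; the conceptual content is simply that $Y_{G+H}$ is bilinear in the pair $(Y_G,Y_H)$ and that the gluing takes place at the distinguished vertex, which is precisely what lets the construction pass to $\UBCSym$.
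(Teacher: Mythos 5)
Your proposal is correct, but it reaches the theorem by a genuinely different route than the paper. The paper works in the $p$-basis: using the subgraph expansion $Y_G=\sum_{S\subseteq E(G)}(-1)^{|S|}p_{\pi(S)}$ of Gebhard and Sagan, it defines for each $S_2\subseteq E(H)$ a map $T_{S_2}\colon p_\pi\mapsto p_{\type(\pi')}$, where $\pi'$ records the connected components of $K_\pi+(V(H),S_2)$, checks compatibility with the type-equivalence, and sets $\overline{T_H}=\sum_{S_2\subseteq E(H)}(-1)^{|S_2|}\overline{T_{S_2}}$, so that the inclusion--exclusion over $E(H)$ reassembles $Y_{G+H}$. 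You instead observe that $Y_{G+H}$ is the image of $(Y_G,Y_H)$ under a bilinear gluing product, define the map on the $e$-basis via $e_\pi=Y_{K_\pi}$, and descend to the quotient by the relabelling argument. Your descent step is essentially the same as the paper's (a permutation of $[d]$ fixing $d$, extended to $[d+|H|-1]$ so that it fixes the tail and in particular the last label, combined with $Y_{\delta(G)}=\delta\circ Y_G$ and the invariance of $\projUBC$), but the construction of the map upstairs is different and arguably more conceptual: linearity in $\y_G$ is immediate from bilinearity of the gluing product, only the colouring definition and $Y_{K_\pi}=e_\pi$ are needed, and the argument matches the ``reduction to complete graphs'' philosophy of that section. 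What the paper's version buys in exchange is an explicit closed formula for $\overline{T_H}$ in the $p$-basis (a signed sum over edge subsets of $H$), which parallels the subgraph expansions used elsewhere (e.g.\ in the proof of Theorem~\ref{the:G-v}), and it sidesteps the small bookkeeping you rightly flag: that the gluing product is well defined on homogeneous power series (each target monomial has a unique factorization into its first $d$ and last $|H|$ letters, so coefficients are finite), and that the shift built into the definition of $G+H$ places the glued vertex at position $d$ so the monomial-level gluing matches the concatenation. Neither point is a gap, just details to write out.
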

\begin{proof}
For each subset $S_2\subseteq E(H)$, define $T_{S_2}:\NCSym^{d} \to \UBCSym^{d+|H|-1}$ to be the linear map taking each $p_\pi \mapsto p_{\type(\pi')}$, where given a set partition $\pi\vdash [d]$, we construct the set partition $\pi' \vdash [d+|H|-1]$ describing the connected components of the labelled graph $K_\pi+(V(H),S_2)$.

We next show that $T_{S_2}$ induces a linear map on $\UBCSym^d$. If $\pi_1,\pi_2\vdash [d]$ are of the same type, then there exists $\delta\in \Sg_d$ fixing $d$ such that $\pi_1=\delta(\pi_2)$. If we interpret $\delta$ as an element of $\Sg_{d+|H|-1}$ {acting only on} the first $d$ positions, we see that $\delta$ fixes the last $|H|$ positions and satisfies $\pi_1'=\delta(\pi_2')$. So $\type(\pi_1')=\type(\pi_2')$ and $$T_{S_2}(p_{\pi_1})=p_{\type(\pi_1')}=p_{\type(\pi_2')}=T_{S_2}(p_{\pi_2})$$ whenever $\pi_1,\pi_2$ are of the same type. This induces a linear map $\overline{T_{S_2}}:\UBCSym^d\to\UBCSym^{d+|H|-1}$.

Next we define $$\overline{T_H}=\sum_{S_2\subseteq E(H)}(-1)^{|S_2|}\overline{T_{S_2}}.$$

Let $G$ be any labelled graph on $d$ vertices. By \cite[{Theorem 3.6}]{GebSag},
\begin{align*}
    Y_{G+H}&=\sum_{S\subseteq E(G+H)}(-1)^{|S|}p_{\pi(S)}\\
    &=\sum_{S_1\subseteq E(G)} (-1)^{|S_1|}\sum_{S_2\subseteq E(H)} (-1)^{|S_2|}p_{\pi(S_1\cup S_2)},
\end{align*}
where $\pi(S)$ is the set partition of $[d+|H|-1]$ describing the connected components of the labelled graph $(V(G+H),S)$. Also, $$Y_G=\sum_{S_1\subseteq E(G)}(-1)^{|S_1|}p_{\pi(S_1)},$$ where in this case $\pi(S_1)\vdash [d]$ describes the connected components of $(V(G),S_1)$. 

Then
\begin{align*}
    \overline{T_H}(\y_G) &= \sum_{S_2\subseteq E(H)}(-1)^{|S_2|}\overline{T_{S_2}}(\y_G) \\
    &= \sum_{S_2\subseteq E(H)}(-1)^{|S_2|}{T_{S_2}}\left(\sum_{S_1\subseteq E(G)}(-1)^{|S_1|}p_{\pi(S_1)}\right)\\
    &= \sum_{S_2\subseteq E(H)} (-1)^{|S_2|} \sum_{S_1\subseteq E(G)} (-1)^{|S_1|} p_{\type(\pi(S_1 \cup S_2))}=\y_{G+H},
\end{align*}
proving the statement of the proposition.
\end{proof}

We can apply our {techniques} to the map in Theorem~\ref{the:linear} to obtain an equivalent condition to appendable $(e)$-positivity.

\begin{proposition} \label{prop:Kd+H}
A labelled graph $H$ is appendable $(e)$-positive if and only if $K_d+H$ is $(e)$-positive for all $d\in \mathbb Z_{>0}$.
\end{proposition}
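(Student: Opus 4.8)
The plan is to prove the two directions of the equivalence separately, with the forward direction being immediate and the reverse direction requiring the reduction-to-complete-graphs technique introduced in this section. For the forward direction: if $H$ is appendable $(e)$-positive, then by definition $G+H$ is $(e)$-positive for every $(e)$-positive labelled graph $G$; since each $K_d$ is $(e)$-positive (indeed appendable $(e)$-positive by Proposition~\ref{prop:Kn}), we conclude $K_d+H$ is $(e)$-positive for all $d\in\mathbb Z^+$.

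For the reverse direction, suppose $K_d+H$ is $(e)$-positive for all $d\in\mathbb Z^+$, and let $G$ be an arbitrary $(e)$-positive labelled graph on, say, $n$ vertices. We want to show $G+H$ is $(e)$-positive. The key observation is that $\y_{G+H}=\overline{T_H}(\y_G)$ by Theorem~\ref{the:linear}, so it suffices to show that $\overline{T_H}$ sends $(e)$-positive elements of $\UBCSym^n$ to $(e)$-positive elements. Writing $\y_G=\sum_{|\lambda|+b=n}c_{(\lambda,b)}e_{(\lambda,b)}$ with all $c_{(\lambda,b)}\ge 0$, and using that $e_{(\lambda,b)}=\y_{K_{\lambda_1}\mid\cdots\mid K_{\lambda_{\ell(\lambda)}}\mid K_b}$, linearity of $\overline{T_H}$ gives
$$\y_{G+H}=\overline{T_H}(\y_G)=\sum_{|\lambda|+b=n}c_{(\lambda,b)}\,\y_{(K_{\lambda_1}\mid\cdots\mid K_{\lambda_{\ell(\lambda)}}\mid K_b)+H}.$$
So it is enough to show each $\y_{(K_{\lambda_1}\mid\cdots\mid K_{\lambda_{\ell(\lambda)}}\mid K_b)+H}$ is $(e)$-positive. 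Now $(K_{\lambda_1}\mid\cdots\mid K_{\lambda_{\ell(\lambda)}}\mid K_b)+H = K_{\lambda_1}\mid\cdots\mid K_{\lambda_{\ell(\lambda)}}\mid(K_b+H)$, since the slash product leaves the first blocks as a disjoint union and only the last complete graph $K_b$ participates in the concatenation with $H$. Therefore $\y_{(K_{\lambda_1}\mid\cdots\mid K_{\lambda_{\ell(\lambda)}}\mid K_b)+H}=e_{((\lambda_1,\dots,\lambda_{\ell(\lambda)}),?)}\cdots$ — more precisely it equals $e_{\lambda_1}\cdots e_{\lambda_{\ell(\lambda)}}$ (in the $\NCSym$/$\UBCSym$ sense, i.e.\ the slash product of the corresponding $K_{\lambda_i}$) multiplied by $\y_{K_b+H}$, using that $\y_{A\mid B}=\y_A\y_B$ and that disjoint union corresponds to multiplication in $\UBCSym$.

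The final ingredient is that multiplying an $(e)$-positive element of $\UBCSym$ by $e_{(\mu,c)}$ on the left (or equivalently prepending disjoint complete graphs) preserves $(e)$-positivity: in the $e$-basis of $\UBCSym$, $e_{(\mu,c)}\cdot e_{(\nu,d)} = e_{(\mu\cup\nu\cup(c),d)}$ up to the obvious bookkeeping (this follows from $e_\pi e_\sigma = e_{\pi\slashp\sigma}$ in $\NCSym$ together with the fact that $\projUBC$ is an algebra map and only the last block's size matters for the type), so the product of an $e$-positive function with $e_{(\mu,c)}$ remains $e$-positive. Hence $\y_{K_{\lambda_1}\mid\cdots\mid K_{\lambda_{\ell(\lambda)}}\mid(K_b+H)}$ is $(e)$-positive because $\y_{K_b+H}=\y_{K_b\mid\emptyset +H}$ is $(e)$-positive by hypothesis (taking $d=b$), and prepending the disjoint $K_{\lambda_i}$ preserves this. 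Summing with nonnegative coefficients $c_{(\lambda,b)}$ then shows $\y_{G+H}$ is $(e)$-positive, completing the proof.

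The main obstacle I expect is purely bookkeeping: making precise the claim that concatenating $H$ onto a disjoint union $K_{\lambda_1}\mid\cdots\mid K_{\lambda_{\ell(\lambda)}}\mid K_b$ only affects the last factor $K_b$, and that the resulting element of $\UBCSym$ factors as (a product of $e$'s coming from the $\lambda_i$) times $\y_{K_b+H}$, together with the verification that left-multiplication by a basis element $e_{(\mu,c)}$ preserves $(e)$-positivity in $\UBCSym$. Both are essentially consequences of $Y_{A\mid B}=Y_A Y_B$, of $e_\pi e_\sigma = e_{\pi\slashp\sigma}$, and of $\projUBC$ being an algebra homomorphism, but they should be stated carefully so the reduction goes through cleanly.
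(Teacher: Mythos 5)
Your proof is correct and follows essentially the same route as the paper: the forward direction from the definition, and the reverse direction by applying Theorem~\ref{the:linear}, expanding $\y_G$ in the $e$-basis, identifying $e_{(\lambda,b)}=\y_{K_{\lambda_1}\mid\cdots\mid K_{\lambda_{\ell(\lambda)}}\mid K_b}$, and factoring the concatenation through the last block to reduce to the $(e)$-positivity of $\y_{K_b+H}$. The only difference is that you make explicit the bookkeeping step (left-multiplication by an $e$-basis element of $\UBCSym$ preserves $(e)$-positivity, via $e_\pi e_\sigma=e_{\pi\slashp\sigma}$), which the paper leaves implicit.
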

\begin{proof}
The forward direction follows immediately from the definition of appendable $(e)$-positivity, since each $K_d$ is $(e)$-positive.

For the reverse direction, suppose each $K_d+H$ is $(e)$-positive. Let $G$ be any $(e)$-positive labelled graph. By Theorem~\ref{the:linear},
\begin{align*}
    \y_{G+H} &=\overline{T_H}(\y_G)\\
    &= \overline{T_H}\left(\sum_{|\lambda|+b=|G|}c_{(\lambda,b)}e_{(\lambda,b)}\right)\\
    &= \sum_{|\lambda|+b=|G|}c_{(\lambda,b)}\overline{T_H}(e_{(\lambda,b)})\\
    &= \sum_{|\lambda|+b=|G|}c_{(\lambda,b)}\overline{T_H}(\y_{K_{\lambda_1}\mid\cdots\mid K_{\lambda_{\ell(\lambda)}}\mid K_b}) \\
    &= \sum_{|\lambda|+b=|G|}c_{(\lambda,b)}\y_{K_{\lambda_1}\mid\cdots\mid K_{\lambda_{\ell(\lambda)}}\mid K_b+H} \\
    &= \sum_{|\lambda|+b=|G|}c_{(\lambda,b)} e_{((\lambda_1,\dots,\lambda_{\ell(\lambda)-1}),\lambda_{\ell(\lambda)})} \y_{K_b+H}
\end{align*}
for some nonnegative coefficients $c_{(\lambda,b)}$, and so $\y_{G+H}$ is $(e)$-positive. Since this holds for all $(e)$-positive $G$, it follows that $H$ is appendable $(e)$-positive.
\end{proof}

\begin{corollary}
Conjectures~\ref{conj:alle} and~\ref{conj:allae} are equivalent.
\end{corollary}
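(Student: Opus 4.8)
The plan is to observe that one implication is immediate and the other follows directly from Proposition~\ref{prop:Kd+H} together with the fact that concatenation preserves the class of labelled unit interval graphs.

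First I would note that Conjecture~\ref{conj:allae} implies Conjecture~\ref{conj:alle}: every appendable $(e)$-positive labelled graph is $(e)$-positive, as recorded right after the definition of appendable $(e)$-positivity (since $K_1$ is $(e)$-positive and $H = K_1 + H$). So if all labelled unit interval graphs are appendable $(e)$-positive, they are all $(e)$-positive.

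For the converse, assume Conjecture~\ref{conj:alle}, i.e. every labelled unit interval graph is $(e)$-positive. Let $H$ be an arbitrary labelled unit interval graph. For any $d \in \mathbb{Z}^+$, the complete graph $K_d$ is a labelled unit interval graph, and the concatenation of two labelled unit interval graphs is again a labelled unit interval graph (as noted in Section~\ref{sec:bg}), so $K_d + H$ is a labelled unit interval graph. By the assumed Conjecture~\ref{conj:alle}, $K_d + H$ is therefore $(e)$-positive for every $d \in \mathbb{Z}^+$. By Proposition~\ref{prop:Kd+H}, this is exactly the condition for $H$ to be appendable $(e)$-positive. Since $H$ was an arbitrary labelled unit interval graph, Conjecture~\ref{conj:allae} holds.

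There is no real obstacle here — the content has been front-loaded into Proposition~\ref{prop:Kd+H}, whose proof in turn relies on the linearity of $\overline{T_H}$ from Theorem~\ref{the:linear}; the corollary itself is just the bookkeeping observation that the class of labelled unit interval graphs is closed under prepending a clique. The only point to state carefully is that $K_d + H$ stays within the class, which is already available from the background section.
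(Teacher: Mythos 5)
Your argument is correct and matches the paper's proof essentially verbatim: one direction follows from appendable $(e)$-positivity implying $(e)$-positivity, and the converse applies Proposition~\ref{prop:Kd+H} after noting that $K_d+H$ remains a labelled unit interval graph. No issues.
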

\begin{proof}
Conjecture~\ref{conj:allae} implies Conjecture~\ref{conj:alle}, because every appendable $(e)$-positive labelled graph is $(e)$-positive.

For the other direction, suppose Conjecture~\ref{conj:alle} held. Let $H$ be any labelled unit interval graph. For each $d\in\mathbb Z_{>0}$, the labelled graph $K_d+H$ is also a labelled unit interval graph, and so is $(e)$-positive, by assumption. Then by Proposition~\ref{prop:Kd+H}, every labelled unit interval graph $H$ is appendable $(e)$-positive. So Conjecture~\ref{conj:allae} would follow from Conjecture~\ref{conj:alle}.
\end{proof}

Another application of the idea in this section gives a method of constructing new $e$-positive graphs from a pair of $(e)$-positive graphs.

\begin{theorem}\label{the:G+Hr}
If $G$ and $H$ are $(e)$-positive labelled graphs, then $G+H^r$ is $e$-positive.
\end{theorem}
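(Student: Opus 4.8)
The plan is to exploit the ``reduction to complete graphs'' principle of this section: it suffices to verify the statement when $G$ is a disjoint union of complete graphs, since then $G+H^r$ can be identified, after a relabelling, with a graph whose $e$-positivity is already in hand.

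First I would apply Theorem~\ref{the:linear} to the labelled graph $H^r$, obtaining a linear map $\overline{T_{H^r}}\colon\UBCSym^{|G|}\to\UBCSym^{|G|+|H|-1}$ with $\overline{T_{H^r}}(\y_{G'})=\y_{G'+H^r}$ for every labelled graph $G'$ on $|G|$ vertices. Composing with $\projSym$, and using that $X_{G'}=\projSym(\y_{G'})$ (as $\rho=\projSym\circ\projUBC$), the $(e)$-positivity of $G$ lets us write $\y_G=\sum_{|\lambda|+b=|G|}c_{(\lambda,b)}e_{(\lambda,b)}$ with all $c_{(\lambda,b)}\ge 0$; then the identity $e_{(\lambda,b)}=\y_{K_{\lambda_1}\mid\cdots\mid K_{\lambda_{\ell(\lambda)}}\mid K_b}$ together with linearity yields
$$
X_{G+H^r}=\projSym\bigl(\overline{T_{H^r}}(\y_G)\bigr)=\sum_{|\lambda|+b=|G|}c_{(\lambda,b)}\,X_{(K_{\lambda_1}\mid\cdots\mid K_{\lambda_{\ell(\lambda)}}\mid K_b)+H^r}.
$$
So it is enough to prove that each graph $(K_{\lambda_1}\mid\cdots\mid K_{\lambda_{\ell(\lambda)}}\mid K_b)+H^r$ is $e$-positive.

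For this, associativity of $\mid$ and $+$ gives $(K_{\lambda_1}\mid\cdots\mid K_{\lambda_{\ell(\lambda)}}\mid K_b)+H^r=K_{\lambda_1}\mid\cdots\mid K_{\lambda_{\ell(\lambda)}}\mid(K_b+H^r)$, and since $X$ is multiplicative over disjoint unions its chromatic symmetric function equals $\bigl(\prod_i X_{K_{\lambda_i}}\bigr)X_{K_b+H^r}$, with each $K_{\lambda_i}$ (hence $X_{K_{\lambda_i}}$) $e$-positive. It remains to show $X_{K_b+H^r}$ is $e$-positive. Unwinding the definitions of concatenation and reverse graph gives the elementary identity $(G_1+G_2)^r=G_2^r+G_1^r$, so $K_b+H^r=K_b^r+H^r=(H+K_b)^r$ and hence $X_{K_b+H^r}=X_{H+K_b}$ because $X$ is invariant under relabelling. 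Finally $H+K_b$ is $(e)$-positive, since $H$ is $(e)$-positive by hypothesis and $K_b$ is appendable $(e)$-positive by Proposition~\ref{prop:Kn}; therefore $X_{H+K_b}$ is $e$-positive, as $\projSym$ carries $(e)$-positive functions to $e$-positive ones. Assembling the pieces, $X_{G+H^r}$ is a nonnegative combination of products of $e$-positive symmetric functions, hence $e$-positive.

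The one non-formal point is the reduction in the first step: one must be sure that $G'\mapsto X_{G'+H^r}$ genuinely descends to a \emph{linear} map out of $\UBCSym^{|G|}$, i.e. that passing from $G'$ to $\y_{G'}$ discards nothing relevant to $X_{G'+H^r}$. This is precisely what Theorem~\ref{the:linear} supplies, so with that in place the argument is short; everything afterward is routine manipulation of reversals, disjoint unions, and the appendable $(e)$-positivity of complete graphs.
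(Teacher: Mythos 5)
Your proposal is correct and follows essentially the same route as the paper: apply Theorem~\ref{the:linear} and the $(e)$-positive expansion of $\y_G$ to reduce to graphs $K_{\lambda_1}\mid\cdots\mid K_{\lambda_{\ell(\lambda)}}\mid K_b+H^r$, then identify $X_{K_b+H^r}=X_{H+K_b}$ and invoke Proposition~\ref{prop:Kn}. The only difference is that you spell out explicitly the reversal identity $(H+K_b)^r=K_b+H^r$ and the multiplicativity over disjoint unions, which the paper compresses into the single line $X_{K_{\lambda_1}\mid\cdots\mid K_{\lambda_{\ell(\lambda)}}\mid K_b+H^r}=\lambda!\,e_\lambda X_{H+K_b}$.
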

\begin{proof}
By Theorem~\ref{the:linear}, there exists a linear map $\overline{T_{H^r}}:\UBCSym^{|G|}\to \UBCSym^{|G|+|H|-1}$ sending $\y_{G'}\mapsto \y_{G'+H^r}$ for all labelled graphs $G'$ on $|G|$ vertices.

Since $G$ is $(e)$-positive, there exist nonnegative coefficients $c_{(\lambda,b)}$ such that
\begin{align*}
    X_{G+H^r} &= \projSym\left(\overline{T_{H^r}}\left(\y_G\right)\right) \\
    &= \projSym\left(\overline{T_{H^r}}\left(\sum_{|\lambda|+b=|G|}c_{(\lambda,b)}e_{(\lambda,b)}\right)\right) \\
    &= \sum_{|\lambda|+b=|G|}c_{(\lambda,b)}\projSym\left(\overline{T_{H^r}}(e_{(\lambda,b)})\right) \\
    &= \sum_{|\lambda|+b=|G|}c_{(\lambda,b)}\projSym\left(\overline{T_{H^r}}(\y_{K_{\lambda_1}\mid \dots \mid K_{\lambda_{\ell(\lambda)}} \mid K_b})\right) \\ 
    &= \sum_{|\lambda|+b=|G|}c_{(\lambda,b)}X_{K_{\lambda_1}\mid \dots \mid K_{\lambda_{\ell(\lambda)}} \mid K_b+H^r}\\
    &= \sum_{|\lambda|+b=|G|}c_{(\lambda,b)}\lambda! e_\lambda X_{H+K_b}.\\
\end{align*}

Note since $H$ is $(e)$-positive, by Proposition~\ref{prop:Kn} each $H+K_b$ is $(e)$-positive (and hence $e$-positive). So $G+H^r$ is $e$-positive.
\end{proof}

We next give a summary of the positivity results we obtain from concatenating $(e)$-positive and appendable $(e)$-positive graphs.

\begin{corollary}\label{cor:summ}
If $G,G'$ are $(e)$-positive and $(H_i)_{i=1}^k$ are appendable $(e)$-positive, then
\begin{enumerate}
    \item[(a)]
    $\sum_{i=1}^k H_i$ is appendable $(e)$-positive,
    \item[(b)] $G+\sum_{i=1}^kH_i$ is $(e)$-positive, and
    \item[(c)] $G+\sum_{i=1}^kH_i+G'^{r}$ is $e$-positive.
\end{enumerate}
\end{corollary}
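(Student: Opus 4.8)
The plan is to prove the three parts of Corollary~\ref{cor:summ} in order, each as a short deduction from the results already established, chiefly Theorem~\ref{the:linear}, Theorem~\ref{the:G+Hr}, and the definitions of $(e)$-positivity and appendable $(e)$-positivity.

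For part (a), I would argue by induction on $k$. The case $k=0$ gives $\sum_{i=1}^0 H_i = K_1$, which is appendable $(e)$-positive since $\y_{G+K_1}=\y_G e_{(\emptyset,1)}$ is $(e)$-positive whenever $\y_G$ is (multiplying an $(e)$-positive function by $e_{(\emptyset,1)}$ preserves $(e)$-positivity, as $e_{(\lambda,b)}e_{(\emptyset,1)} = e_{\lambda\pcup(b),1}$ in the $e$-basis by $e_\pi e_\sigma = e_{\pi\slashp\sigma}$). For the inductive step, write $\sum_{i=1}^{k}H_i = \left(\sum_{i=1}^{k-1}H_i\right) + H_k$; if $G$ is any $(e)$-positive labelled graph, then $G + \sum_{i=1}^{k-1}H_i$ is $(e)$-positive by the inductive hypothesis applied to the first $k-1$ graphs, and then appending $H_k$ keeps it $(e)$-positive since $H_k$ is appendable $(e)$-positive. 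Hence $\sum_{i=1}^k H_i$ is appendable $(e)$-positive.

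Part (b) is then immediate: since $\sum_{i=1}^k H_i$ is appendable $(e)$-positive by part (a), and $G$ is $(e)$-positive by hypothesis, the graph $G + \sum_{i=1}^k H_i$ is $(e)$-positive by the definition of appendable $(e)$-positivity. For part (c), I would apply Theorem~\ref{the:G+Hr} with the roles played by $G+\sum_{i=1}^k H_i$ (which is $(e)$-positive by part (b)) and $G'$ (which is $(e)$-positive by hypothesis): Theorem~\ref{the:G+Hr} gives that $\left(G+\sum_{i=1}^k H_i\right) + G'^{r}$ is $e$-positive, and this graph is exactly $G+\sum_{i=1}^k H_i + G'^{r}$ by associativity of concatenation.

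I do not expect any real obstacle here — the corollary is essentially bookkeeping that packages Theorems~\ref{the:linear} and~\ref{the:G+Hr} together. The only point requiring a moment's care is verifying that concatenation $+$ is associative at the level of labelled graphs, so that expressions like $G+\sum_{i=1}^k H_i + G'^{r}$ are unambiguous; this follows directly from the definition of $G+H$ via identifying the last vertex of the left graph with the first vertex of the right graph and shifting labels, and it is already implicitly used in the definition $\sum_{j=1}^k G_j = G_1 + \cdots + G_k$ given in Section~\ref{sec:bg}.
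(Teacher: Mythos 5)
Your proof is correct and takes essentially the same route as the paper's (induction on $k$ for (a), the definition of appendable $(e)$-positivity for (b), and Theorem~\ref{the:G+Hr} applied to the $(e)$-positive graphs $G+\sum_{i=1}^kH_i$ and $G'$ for (c)). The only slip is in your base case: by the definition of concatenation $G+K_1=G$, so $\y_{G+K_1}=\y_G$ (your formula $\y_Ge_{(\emptyset,1)}$ is instead $\y_{G\mid K_1}$, the disjoint union), but this only makes the appendable $(e)$-positivity of $K_1$ more immediate, so nothing in your argument is affected.
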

\begin{proof}
Part (a) follows from definition of appendable $(e)$-positivity and induction on $k$. Part (b) follows from part (a). Part (c) follows from part (b) together with Theorem~\ref{the:G+Hr}.
\end{proof}
We demonstrate Corollary~\ref{cor:summ} in the following proposition, which proves the $e$-positivity of \textit{kayak paddle graphs} $KP_{m,\ell-1,n}=C_m+P_{\ell+1}+C_n$ for $m,n\ge3$ and $\ell\ge 0$. See Figure~\ref{fig:KP} below for the example of $KP_{4,3,4}$.

\begin{figure}[H]
    
    \caption{}
    \label{fig:KP}
    \begin{tikzpicture}
    \coordinate (A) at (6.25,0.75);
    \coordinate (B) at (7,0);
    \coordinate (C) at (7.75,0);
    \coordinate (D) at (8.5,0);
    \coordinate (E) at (9.25,0);
    \coordinate (F) at (10,0);
    \coordinate (G) at (10.75,.75);
    \coordinate (H) at (11.5,0);
    \coordinate (I) at (10.75,-0.75);
    \coordinate (J) at (5.5,0);
    \coordinate (K) at (6.25,-.75);
    \draw[black] (B)--(K)--(J)--(A)--(B)--(C)--(D)--(E)--(F)--(G)--(H)--(I)--(F);
    \filldraw[black] (A) circle [radius=2pt];
    \filldraw[black] (B) circle [radius=2pt];
    \filldraw[black] (C) circle [radius=2pt];
    \filldraw[black] (E) circle [radius=2pt];
    \filldraw[black] (G) circle [radius=2pt];
    \filldraw[black] (D) circle [radius=2pt];
    \filldraw[black] (F) circle [radius=2pt];
    \filldraw[black] (G) circle [radius=2pt];
    \filldraw[black] (H) circle [radius=2pt];
    \filldraw[black] (I) circle [radius=2pt];
    \filldraw[black] (J) circle [radius=2pt];
    \filldraw[black] (K) circle [radius=2pt];
    \node [] at (8.5,-.7) {$KP_{4,3,4}$};
    \end{tikzpicture}
\end{figure}

\begin{proposition}
Kayak paddle graphs $KP_{m,\ell-1,n}$ for $m,n\ge3$ and $\ell\ge 0$, are $e$-positive.
\end{proposition}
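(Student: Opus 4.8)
The plan is to realize $KP_{m,\ell-1,n}=C_m+P_{\ell+1}+C_n$ as a concatenation of exactly the shape handled by Corollary~\ref{cor:summ}(c), namely $G+\sum_{i=1}^kH_i+G'^r$ with $G,G'$ being $(e)$-positive and each $H_i$ appendable $(e)$-positive. Once the pieces are matched up, $e$-positivity is an immediate consequence of that corollary, so the only work is in the bookkeeping.

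First I would record the two input facts. The cycles $C_m$ and $C_n$ are $(e)$-positive by Proposition~\ref{prop:cycle}, so they can play the roles of $G$ and $G'$. Moreover, the reverse graph of a cycle is the same cycle: writing $C_n=P_n$ together with the edge $1n$, the relabelling $i\mapsto n+1-i$ carries the path edges $i(i+1)$ to path edges and fixes the edge $1n$, so $C_n^r=C_n$ as a labelled graph. The second input is that $K_2$ is appendable $(e)$-positive by Proposition~\ref{prop:Kn}; hence so is any concatenation $\sum_{i=1}^{\ell}K_2$ by Corollary~\ref{cor:summ}(a). But this concatenation is precisely the path $P_{\ell+1}$: identifying the last vertex of one $K_2$ with the first vertex of the next, iterated $\ell$ times, produces a path on $\ell+1$ vertices, where for $\ell=0$ we invoke the convention $\sum_{i=1}^{0}K_2=K_1=P_1$.

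Then I would simply apply Corollary~\ref{cor:summ}(c) with $G=C_m$, with $H_i=K_2$ for $1\le i\le\ell$, and with $G'=C_n$ (so that $G'^r=C_n^r=C_n$), which yields that
\[
C_m+\sum_{i=1}^{\ell}K_2+C_n^r \;=\; C_m+P_{\ell+1}+C_n \;=\; KP_{m,\ell-1,n}
\]
is $e$-positive, as desired. I do not expect any genuine obstacle: the statement is essentially an immediate corollary of the structural results already in place, and the only points needing (routine) care are the degenerate case $\ell=0$ — where $P_1=K_1$ and concatenation with $K_1$ acts as the identity, so that $KP_{m,-1,n}=C_m+C_n$ — and the elementary verification that $C_n^r=C_n$ on the nose.
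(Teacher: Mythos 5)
Your proposal is correct and follows essentially the same route as the paper: write $KP_{m,\ell-1,n}=C_m+\sum_{i=1}^{\ell}K_2+C_n^r$, cite Proposition~\ref{prop:cycle} for the $(e)$-positivity of the cycles and Proposition~\ref{prop:Kn} for the appendable $(e)$-positivity of $K_2$, and conclude by Corollary~\ref{cor:summ}(c). Your explicit checks that $C_n^r=C_n$, that $\sum_{i=1}^{\ell}K_2=P_{\ell+1}$, and that the $\ell=0$ convention works are just the bookkeeping the paper leaves implicit.
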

\begin{proof}
Note $KP_{m,\ell-1,n}=C_m+\sum_{i=1}^\ell K_2 + C_n^r$, where the cycles $C_m$ and $C_n$ are $(e)$-positive by Proposition~\ref{prop:cycle}, and the complete graph $K_2$ is appendable $(e)$-positive by Proposition~\ref{prop:Kn}. By Corollary~\ref{cor:summ}(c), the graph $KP_{m,\ell-1,n}$ is $e$-positive.
\end{proof}

Our last result of the section is a theorem relating the $(e)$-positivity of $\y_{G\at v}$ to the $e$-positivity of $X_{G-v}$, {where $G-v$ is the graph $G$ with vertex $v$ and all incident edges deleted.}

\begin{theorem}\label{the:G-v}
If a graph $G$ is $(e)$-positive at a vertex $v$, then $G-v$ is $e$-positive.
\end{theorem}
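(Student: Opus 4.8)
The plan is to realise the operation $G\mapsto G-v$ as a well-defined linear map on $\UBCSym$, in the spirit of Section~\ref{sec:complete}, and then to read off the $e$-positivity of $X_{G-v}$ from the $e$-expansion of $\y_{G\at v}$. Fix a labelling of $G$ on $d=|G|$ vertices with $v$ as the last vertex, so that $G-v$ is a labelled graph on $[d-1]$. First I would introduce the linear map $D\colon\NCSym^d\to\Sym^{d-1}$, defined on the power sum basis $\{p_\sigma\}_{\sigma\vdash[d]}$ of $\NCSym^d$ by
$$D(p_\sigma)=\begin{cases}p_{\lambda(\sigma')}&\text{if }|B_{\sigma,d}|=1,\\ 0&\text{if }|B_{\sigma,d}|>1,\end{cases}$$
where $\sigma'\vdash[d-1]$ is obtained from $\sigma$ by deleting $d$ from its block, and extended linearly.

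The first key step is to show $D(Y_H)=X_{H-d}$ for \emph{every} labelled graph $H$ on $[d]$. Starting from the power sum expansion $Y_H=\sum_{S\subseteq E(H)}(-1)^{|S|}p_{\pi(S)}$ of \cite[Theorem 3.6]{GebSag}, I would split each $S$ into the edges incident to $d$ and the rest $T\subseteq E(H-d)$. If $S$ contains an edge at $d$, then $d$ lies in a connected component of $(V(H),S)$ of size at least $2$, so $|B_{\pi(S),d}|\ge 2$ and the corresponding term dies under $D$. If $S=T$ contains no edge at $d$, then $\pi(S)$ is the connected-components partition of $(V(H-d),T)$ with the extra singleton block $\{d\}$ adjoined, so $D(p_{\pi(S)})=p_{\lambda(T)}$, where $\lambda(T)$ is the partition of connected-component sizes of $(V(H-d),T)$. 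Summing over the surviving $T\subseteq E(H-d)$ then recovers exactly the Whitney-type power sum expansion of $X_{H-d}$ (equivalently, apply $\rho$ to the analogous expansion of $Y_{H-d}$).

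The second key step is the well-definedness on $\UBCSym$: I would check $\ker\projUBC\subseteq\ker D$. Since $\ker\projUBC$ is spanned by the differences $e_{\pi_1}-e_{\pi_2}$ with $\type(\pi_1)=\type(\pi_2)$, and since $e_\pi=Y_{K_\pi}$ gives $D(e_\pi)=X_{K_\pi-d}$ by the previous step, it suffices to observe that $\type(\pi_1)=\type(\pi_2)=(\lambda,b)$ forces $K_{\pi_1}-d$ and $K_{\pi_2}-d$ to be isomorphic graphs, both equal to $K_{\lambda_1}\mid\cdots\mid K_{\lambda_{\ell(\lambda)}}\mid K_{b-1}$ (with $K_0$ the empty graph), hence with equal chromatic symmetric functions. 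Thus $D$ descends to a linear map $\bar D\colon\UBCSym^d\to\Sym^{d-1}$ with $\bar D\circ\projUBC=D$, so in particular $\bar D(\y_H)=X_{H-d}$ for all labelled $H$ on $[d]$.

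To finish, I would compute $\bar D$ on the $e$-basis using $e_{(\lambda,b)}=\y_{K_{\lambda_1}\mid\cdots\mid K_{\lambda_{\ell(\lambda)}}\mid K_b}$, which gives $\bar D(e_{(\lambda,b)})=X_{K_{\lambda_1}\mid\cdots\mid K_{\lambda_{\ell(\lambda)}}\mid K_{b-1}}=\lambda!\,(b-1)!\,e_{\lambda\pcup(b-1)}$ (where the part $b-1$ is dropped when $b=1$), which is $e$-positive. Hence, if $G$ is $(e)$-positive at $v$, then writing $\y_{G\at v}=\sum_{|\lambda|+b=d}c_{(\lambda,b)}e_{(\lambda,b)}$ with all $c_{(\lambda,b)}\ge 0$ and applying $\bar D$ yields $X_{G-v}=\sum_{|\lambda|+b=d}c_{(\lambda,b)}\,\lambda!\,(b-1)!\,e_{\lambda\pcup(b-1)}$, which is $e$-positive, so $G-v$ is $e$-positive. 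The one genuinely delicate point — and the part the argument is really built around — is establishing that the naive graph operation ``delete the last vertex'' descends to a well-defined map on $\y_{G\at v}$, that is, the combination of $D(Y_H)=X_{H-d}$ and $\ker\projUBC\subseteq\ker D$; everything else is routine manipulation of the $p$- and $e$-bases.
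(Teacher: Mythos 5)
Your proposal is correct and takes essentially the same approach as the paper: the paper's map $\vartheta$ is exactly your $D$, except that it is defined directly on the basis $\{p_{(\lambda,b)}\}$ of $\UBCSym^{|G|}$ (so the descent check via $\ker\projUBC$ that you carry out is not needed), and it then establishes $\vartheta(\y_{G\at v})=X_{G-v}$ by the same splitting of the subgraph expansion of \cite[Theorem 3.6]{GebSag} according to whether $S\subseteq E(G-v)$, invoking \cite[Theorem 2.5]{Stan95} just as you do. The final computation $\vartheta(e_{(\lambda,b)})=\lambda!\,(b-1)!\,e_{\lambda\pcup(b-1)}$ via $e_{(\lambda,b)}=\y_{K_{\lambda_1}\mid\cdots\mid K_{\lambda_{\ell(\lambda)}}\mid K_b}$ is also identical to the paper's.
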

\begin{proof}
Define the linear map $\vartheta: \UBCSym^{|G|}\to \Sym^{|G|-1}$ satisfying
$$\vartheta (p_{(\lambda,b)}) = \begin{cases}
p_\lambda &\text{if }b=1,\\
0&\text{otherwise.}
\end{cases}$$

Give $G$ a labelling in which $v$ is labelled last. By \cite[{Theorem 3.6}]{GebSag},
$$\y_{G\at v}= \sum_{S\subseteq E(G)} (-1)^{|S|}p_{\type(\pi(S))},$$
where $\pi(S)$ is the set partition describing the connected components of $(V(G),S)$. Note $\type(\pi(S))=(\lambda,b)$ has $b=1$ if and only if $S\subseteq E(G-v)$. So
$$\y_{G\at v}=\sum_{S\subseteq E(G-v)}(-1)^{|S|}p_{(\lambda(S),1)} + \sum_{\substack{S\subseteq E(G)\\ S\not\subseteq E(G-v)}} (-1)^{|S|}p_{\type(\pi(S))},$$
where for $S\subseteq E(G-v)$ the integer partition $\lambda(S)$ describes the connected components of $(V(G-v),S)$, and ${\type(\pi(S))}=(\lambda,b)$ has $b>1$ for all $S\subseteq E(G)$ satisfying $S\not\subseteq E(G-v)$.

Therefore,
$$\vartheta(\y_{G\at v}) = \sum_{S\subseteq E(G-v)}(-1)^{|S|}p_{\lambda(S)}= X_{G-v},$$ with the right equality {following} by \cite[Theorem 2.5]{Stan95}. {The equation above} holds for all graphs on $|G|$ vertices with a distinguished vertex so {in particular,}
$$\vartheta(e_{(\lambda,b)})=\vartheta(\y_{K_{\lambda_1}\mid\dots\mid K_{\ell(\lambda)}\mid K_b})=X_{K_{\lambda_1}\mid\cdots\mid K_{\lambda_{\ell(\lambda)}}\mid K_{b-1}} = \lambda!(b-1)!e_{{\lambda\pcup(b-1)}},$$
where for a labelled graph $G$ we take $G\mid K_0$ to mean $G$, and for a partition $\lambda$ we take $\lambda\pcup(0)$ to mean $\lambda$.

So if
$$\y_{G\at v}=\sum_{|\lambda|+b=|G|}c_{(\lambda,b)}e_{(\lambda,b)},$$
then we have
$$X_{G-v}=\vartheta(\y_{G\at v})=\sum_{|\lambda|+b=|G|}c_{(\lambda,b)}\lambda!(b-1)!e_{{\lambda\pcup(b-1)}}.$$ In particular if $G$ is $(e)$-positive at $v$, then $G-v$ is $e$-positive.
\end{proof}

\begin{example}
Let $G$ denote the left graph in Figure~\ref{fig:G-v} below, with vertex $v$ as labelled.

\begin{figure}[H]
\caption{} 
\label{fig:G-v}
\begin{tikzpicture}
\coordinate (A) at (0,0);
\coordinate (B) at (1,0);
\coordinate (C) at (2,0);
\coordinate (D) at (3,0);
\coordinate (E) at (1,1);
\coordinate (F) at (2,1);
\draw[black] (A)--(B)--(C)--(D);
\draw[black] (B)--(E)--(F)--(C);

\filldraw[black] (A) circle [radius=2pt];
\filldraw[black] (B) circle [radius=2pt];
\filldraw[black] (C) circle [radius=2pt];
\filldraw[black] (E) circle [radius=2pt] node[above] {\scriptsize $v$};
\filldraw[black] (D) circle [radius=2pt];
\filldraw[black] (F) circle [radius=2pt];
\node [] at (1.5,-.5) {$G$};

\coordinate (A) at (5,0);
\coordinate (B) at (6,0);
\coordinate (C) at (7,0);
\coordinate (D) at (8,0);
\coordinate (F) at (7,1);
\filldraw[black] (A) circle [radius=2pt];
\filldraw[black] (B) circle [radius=2pt];
\filldraw[black] (C) circle [radius=2pt];
\filldraw[black] (D) circle [radius=2pt];
\filldraw[black] (F) circle [radius=2pt];
\draw[black] (A)--(B)--(C)--(D);
\draw[black] (F)--(C);
\node [] at (6.5,-.5) {$G-v$};
\end{tikzpicture}
\end{figure}

Then
$$\y_{G\at v}=\frac{1}{12}e_{((4,1),1)}+\frac{1}{60}e_{((5),1)}+\frac{1}{4}e_{((2^2),2)}+\frac{1}{6}e_{((3,1),2)}+\frac{1}{24}e_{((4),2)}+\frac{1}{6}e_{((1^2),4)}+\frac{1}{12}e_{((2),4)}+ \frac{1}{6}e_{((1),5)}+ \frac{1}{40}e_{(\emptyset, 6)} ,$$
which is $(e)$-positive. By Theorem~\ref{the:G-v}, the graph $G-v$ is therefore $e$-positive.
\end{example}

\section{The $(e)$-positivity of trees and cut vertices}
\label{sec:tree}
In \cite{DSvWcut}, Dahlberg, She and van Willigenburg studied the positivity of chromatic symmetric functions of trees in the Schur and $e$-bases. It will be of interest to study which trees are $(e)$-positive and at which vertices, because we can construct more $e$-positive trees from given $(e)$-positive trees by applying Theorem~\ref{the:G+Hr} or the appendable $(e)$-positivity of paths.

In their study of positivity of trees, it was also important for Dahlberg, She and van Willigenburg to understand how cut vertices affect positivity. It is also natural to ask if and when a graph can be $(e)$-positive at a cut vertex, noting that none of the $(e)$-positivity results from the previous sections demonstrate $(e)$-positivity at a cut vertex. In this short section, we will resolve both questions.

\begin{proposition}\label{prop:cut}
If $G$ is a graph with cut vertex $v$, then $G$ is not $(e)$-positive at $v$.
\end{proposition}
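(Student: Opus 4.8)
\emph{Plan.} I would argue by contradiction: assume $\y_{G\at v}$ is $(e)$-positive, say $\y_{G\at v}=\sum_{(\lambda,b)}c_{(\lambda,b)}e_{(\lambda,b)}$ with every $c_{(\lambda,b)}\ge 0$, and show that the length $\ell(\lambda)$ of the ``non-distinguished'' partition is forced to be \emph{large} by an argument using $\vartheta$ together with the hypothesis that $v$ is a cut vertex, yet also forced to be \emph{small} by an argument using $\projSym$ together with the decomposition of $X_G$ over the connected components of $G$. Set $t$ to be the number of connected components of $G$; since $v$ is a cut vertex, deleting $v$ splits its component into $j\ge 2$ pieces, so $G-v$ has exactly $t-1+j$ components.

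\emph{First step (lower bound via $\vartheta$).} By Theorem~\ref{the:G-v} we have $\vartheta(\y_{G\at v})=X_{G-v}$, while $\vartheta(e_{(\lambda,b)})=\lambda!\,(b-1)!\,e_{\lambda\cup(b-1)}$ (with the convention $\lambda\cup(0)=\lambda$), so $X_{G-v}=\sum_{(\lambda,b)}c_{(\lambda,b)}\lambda!\,(b-1)!\,e_{\lambda\cup(b-1)}$. On the other hand $X_{G-v}=\prod_{C}X_{C}$ is a product of the chromatic symmetric functions of the $t-1+j$ components $C$ of $G-v$, each of positive degree, so every $e_\nu$ occurring in $X_{G-v}$ has $\ell(\nu)\ge t-1+j$. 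Comparing the coefficient of $e_\nu$ on the two sides for each $\nu$ with $\ell(\nu)<t-1+j$, and using that all $c_{(\lambda,b)}\ge 0$ so that no cancellation can occur, I get $c_{(\lambda,b)}=0$ whenever $\ell(\lambda\cup(b-1))<t-1+j$. Since $\ell(\lambda\cup(b-1))=\ell(\lambda)+1$ when $b\ge 2$ and equals $\ell(\lambda)$ when $b=1$, and since $j\ge 2$, this yields in every case the implication: $c_{(\lambda,b)}\ne 0\ \Rightarrow\ \ell(\lambda)\ge t$.

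\emph{Second step (contradiction via $\projSym$).} From $\projSym(e_{(\lambda,b)})=\lambda!\,b!\,e_{\lambda\cup(b)}$ and $\projSym(\y_{G\at v})=X_G$ we get $X_G=\sum_{(\lambda,b)}c_{(\lambda,b)}\lambda!\,b!\,e_{\lambda\cup(b)}$, and by the first step every surviving term has $\ell(\lambda\cup(b))=\ell(\lambda)+1\ge t+1$; hence the coefficient of $e_\nu$ in $X_G$ vanishes for every $\nu$ with $\ell(\nu)\le t$. But $X_G=\prod_{C}X_C$ over the $t$ components $C$ of $G$, so if $\Lambda$ denotes the partition whose parts are the orders $|C|$, then $\ell(\Lambda)=t$ and the coefficient of $e_\Lambda$ in $X_G$ equals $\prod_{C}\big(\text{coefficient of }e_{(|C|)}\text{ in }X_C\big)$ — indeed, the only way to produce an $e$-monomial with exactly $t$ parts from a $t$-fold product is for each factor to contribute a single-part $e$. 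For a connected graph $C$ the coefficient of $e_{(|C|)}$ in $X_C$ is nonzero (equivalently, $X_C$ has a nonzero $p_{|C|}$-coefficient, which is classical for connected graphs; see \cite{Stan95}). So the coefficient of $e_\Lambda$ in $X_G$ is nonzero although $\ell(\Lambda)=t$, contradicting the previous sentence. Hence $\y_{G\at v}$ is not $(e)$-positive.

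\emph{Expected main obstacle.} The delicate point is the bookkeeping in the first step: one has to notice that $\vartheta$ treats the distinguished block asymmetrically — it deletes a part of size $b-1$ when $b\ge 2$ but deletes nothing when $b=1$ — and that combining this asymmetry with $j\ge 2$ is precisely what is needed to force the uniform bound $\ell(\lambda)\ge t$ on all occurring types. (For connected $G$ one has $t=1$ and this specializes to the cleaner statement that $(e)$-positivity would force $c_{(\emptyset,|G|)}=0$, which contradicts the nonvanishing of the coefficient of $e_{(|G|)}$ in $X_G$.) The only input beyond the machinery already developed in the paper is the classical fact that $X_H$ has a nonzero $p_n$-coefficient for a connected graph $H$ on $n$ vertices.
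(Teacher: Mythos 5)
Your proof is correct, but it takes a genuinely different route from the paper. The paper's proof labels $v$ last, forms the concatenation $G+G^r$, observes that the image of $v$ is then a cut vertex all of whose complementary components have order less than $\lfloor\tfrac{2|G|-1}{2}\rfloor$, invokes the cut-vertex theorem of Dahlberg, She and van Willigenburg \cite[Theorem 35]{DSvWcut} to conclude that $G+G^r$ is not $e$-positive, and deduces the result from Theorem~\ref{the:G+Hr} with $H=G$. You instead argue intrinsically with the coefficients $c_{(\lambda,b)}$ of $\y_{G\at v}$: the identities $\vartheta(\y_{G\at v})=X_{G-v}$ and $\vartheta(e_{(\lambda,b)})=\lambda!(b-1)!\,e_{\lambda\cup(b-1)}$ (these come from the \emph{proof} of Theorem~\ref{the:G-v} rather than its statement, but they are established there), combined with the fact that a graph with $c$ connected components has all $e$-terms of length at least $c$ and the assumed nonnegativity of the $c_{(\lambda,b)}$ (so no cancellation), force $\ell(\lambda)\ge t$ for every surviving type, where $t$ is the number of components of $G$; your case split between $b=1$ and $b\ge 2$ together with $j\ge 2$ is exactly what makes this uniform. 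Applying $\projSym$ then shows the coefficient of $e_\Lambda$ in $X_G$ vanishes, where $\Lambda$ is the partition of component orders with $\ell(\Lambda)=t$, while homogeneity of each factor in $X_G=\prod_C X_C$ identifies that coefficient with $\prod_C$ of the $e_{(|C|)}$-coefficients of the connected pieces, which is nonzero by the classical nonvanishing of the $p_{|C|}$-coefficient (equivalently the $e_{(|C|)}$-coefficient, or the positivity of the number of acyclic orientations with a unique sink) for connected graphs \cite{Stan95}. What each approach buys: yours is self-contained modulo Stanley's classical results, avoiding both the external theorem of \cite{DSvWcut} and Theorem~\ref{the:G+Hr}, and it pinpoints exactly which coefficients obstruct $(e)$-positivity (for connected $G$, that $c_{(\emptyset,|G|)}$ must simultaneously be positive and zero); the paper's argument is shorter given those two ingredients and works at the level of whole graphs rather than coefficient bookkeeping.
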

\begin{proof}
Since $v$ is a cut vertex of $G$, the graph $G-v$ has at least $2$ connected components, each of order $<|G|-1$.

Give $G$ a labelling where $v$ is ordered last. Consider the graph $G+G^r$ on $2|G|-1$ vertices. The image of $v$ in $G+G^r$ is a cut vertex whose deletion gives the disjoint union of $2$ copies of $G-v$, the connected components of which each have order $<|G|-1=\lfloor \frac{2|G|-1}{2}\rfloor$. By \cite[Theorem 35]{DSvWcut}, the graph $G+G^r$ is not $e$-positive.

Then by Theorem~\ref{the:G+Hr}, the graph $G$ cannot be $(e)$-positive at cut vertex $v$.
\end{proof}

\begin{remark}
In Section~\ref{sec:UBCSym}, we defined appendable $(e)$-positivity only for labelled graphs, although there is a notion of appendable $(e)$-positivity that depends only on a graph and a pair of distinguished vertices. (Namely, we may want to ask for which graphs $H$ with distinguished vertices $v,w$ is it true that for all graphs $G$ $(e)$-positive at a vertex $u$ the concatenation obtained by identifying vertex $u$ of $G$ and vertex $v$ of $H$ is $(e)$-positive at $w$.) In our definition of appendable $(e)$-positivity for labelled graphs, these vertices are the first and last vertices of the labelled graph. However, for graphs on $\ge 2$ vertices, there is no labelling that allows a single vertex to be both the first and last vertex of the labelling. This raises the possibility that there may exist nontrivial cases in which a graph and a pair of distinguished vertices satisfies the more general notion of appendable $(e)$-positivity, but in a way that cannot be encapsulated by the notion of appendable $(e)$-positivity for labelled graphs.

Proposition~\ref{prop:cut} {combined with Proposition~\ref{prop:Kd+H}} resolves this concern, because for any connected labelled graph $G$ on $\ge 2$ vertices, $K_d+G$ for $d\ge 2$ cannot be $(e)$-positive at the cut vertex $d$. So every connected graph with a pair of distinguished vertices that can be considered appendable $(e)$-positive admits a labelling in which it is appendable $(e)$-positive as a labelled graph.
\end{remark}

\begin{corollary}
A tree $T$ is $(e)$-positive at vertex $v$ if and only if $T$ is a path with $v$ as one of its endpoints.
\end{corollary}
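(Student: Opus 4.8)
The plan is to prove the equivalence by induction on $|T|$, the engine being a single structural identity expressing $\y_{T\at v}$ for a leaf $v$ in terms of $\y_{(T-v)\at u}$, where $u$ is the unique neighbour of $v$. The base case $|T|=1$ is trivial ($K_1=P_1$), so assume $|T|\ge 2$.

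\textbf{Reducing to a leaf and setting up the identity.} For the forward direction, first I would observe that $v$ must be a leaf: in a tree every vertex of degree $\ge 2$ is a cut vertex, so if $v$ were not a leaf then by Proposition~\ref{prop:cut} the graph $T$ could not be $(e)$-positive at $v$. So let $v$ be a leaf with neighbour $u$. Since $\y_{T\at v}=\projUBC(Y_T)$ for any labelling of $T$ placing $v$ last, I would pick a labelling with $v=d$ and $u=d-1$. Applying the deletion--contraction relation of Proposition~\ref{prop:delcon} to the edge $uv=(d-1)d$ gives $Y_T=Y_{T\setminus uv}-Y_{T/uv}\duct_{d-1}^{d}$, where $T\setminus uv=(T-v)\mid K_1$ and $T/uv=T-v$ (as labelled graphs on $[d-1]$ with $u=d-1$), since $v$ is a leaf. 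The key point is that on $\NCSym^{d-1}$ the operator $\duct_{d-1}^{d}$ is exactly $\duct$; applying the algebra homomorphism $\projUBC$, using $Y_{G\mid H}=Y_GY_H$ and $\projUBC(Y_{K_1})=e_{(\emptyset,1)}$, and using that $\duct$ descends to $\UBCSym$, yields
$$\y_{T\at v}=\y_{(T-v)\at u}\,e_{(\emptyset,1)}-\y_{(T-v)\at u}\duct .$$

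\textbf{Extracting coefficients.} Writing $\y_{(T-v)\at u}=\sum_{(\lambda,b)}c_{(\lambda,b)}e_{(\lambda,b)}$ and using $e_{(\lambda,b)}e_{(\emptyset,1)}=e_{(\lambda\pcup(b),1)}$ together with Lemma~\ref{lem:duct}, the identity becomes
$$\y_{T\at v}=\sum_{(\lambda,b)}c_{(\lambda,b)}\Big(\tfrac{b-1}{b}\,e_{(\lambda\pcup(b),1)}+\tfrac{1}{b}\,e_{(\lambda,b+1)}\Big).$$
The decisive feature is that for every $c\ge 2$ the coefficient of $e_{(\mu,c)}$ in $\y_{T\at v}$ equals $\tfrac{1}{c-1}c_{(\mu,c-1)}$: the terms $\tfrac1b e_{(\lambda,b+1)}$ are the only source of basis elements with second coordinate $\ge 2$, and distinct $(\lambda,b)$ never contribute to the same one. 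Hence $\y_{T\at v}$ being $(e)$-positive forces $c_{(\mu,b)}\ge 0$ for all $\mu$ and all $b\ge 1$, i.e.\ $T-v$ is $(e)$-positive at $u$; and conversely, if all $c_{(\lambda,b)}\ge 0$ then every coefficient displayed above is nonnegative. Thus $T$ is $(e)$-positive at $v$ if and only if $T-v$ is $(e)$-positive at $u$. The induction now closes both directions at once: by the inductive hypothesis $T-v$ is $(e)$-positive at $u$ exactly when $T-v$ is a path with $u$ an endpoint, and attaching the leaf $v$ to the endpoint $u$ turns this into the statement that $T$ is a path with $v$ an endpoint. (Alternatively, the reverse direction can be gotten directly from $P_n=\sum_{i=1}^{n-1}K_2$ and Corollary~\ref{cor:summ}.)

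\textbf{Main obstacle.} I expect the only real care to be needed in establishing the structural identity: verifying that $T\setminus uv=(T-v)\mid K_1$ and $T/uv=T-v$ precisely as labelled graphs on $[d-1]$ with $u=d-1$, and that $\duct_{d-1}^{d}$ genuinely acts as the globally defined $\duct$ on degree-$(d-1)$ elements, so that it descends to the quotient $\UBCSym$. Once the identity is in place, the coefficient bookkeeping and the induction are routine.
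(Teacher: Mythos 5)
Your proposal is correct and follows essentially the same route as the paper: reduce to the case of a leaf $v$ via Proposition~\ref{prop:cut}, relate $\y_{T\at v}$ to $\y_{(T-v)\at u}$ through the $K_2$-concatenation expansion $\sum c_{(\lambda,b)}\bigl(\tfrac{b-1}{b}e_{(\lambda\pcup(b),1)}+\tfrac{1}{b}e_{(\lambda,b+1)}\bigr)$, read off that the coefficients of $e_{(\mu,c)}$ with $c\ge 2$ recover the $c_{(\mu,c-1)}$, and close by induction (the paper phrases this as a minimal counterexample and cites Gebhard--Sagan for both the expansion and the reverse direction). The only difference is cosmetic: you re-derive the expansion from Proposition~\ref{prop:delcon} and Lemma~\ref{lem:duct} instead of citing \cite[Lemma 7.5]{GebSag}, and you obtain the reverse direction from the same identity (or from Corollary~\ref{cor:summ}) rather than from \cite[Proposition 6.4]{GebSag}, both of which check out.
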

\begin{proof}
For the forward direction, suppose the contrary. Then let $T$ be {a minimal tree that is} $(e)$-positive at a vertex $v$ such that $T$ is not a path with $v$ as one of its endpoints. By Proposition~\ref{prop:cut}, $v$ cannot be a cut vertex of the tree $T$, and so must be a leaf. Let $w$ denote its unique neighbour. 

Note $T$ is obtained from $T-v$ by concatenating a copy of $K_2$ at $w$ so that the other endpoint of the $K_2$ becomes the vertex $v$ in $T$. By \cite[Lemma 7.5]{GebSag}, if 
$$\y_{T-v\at w}=\sum_{|\lambda|+b=|T|-1}c_{(\lambda,b)}e_{(\lambda,b)}$$ then
$$\y_{T\at v}=\sum_{|\lambda|+b=|T|-1}c_{(\lambda,b)}\left(\frac{b-1}{b}e_{({\lambda\pcup (b)},1)}+\frac{1}{b}e_{(\lambda,b+1)}\right).$$ 

Note for $(\lambda,b)$ satisfying $|\lambda|+b=|T|-1$ {in the above equations} that the coefficient of $e_{(\lambda,b)}$ in $\y_{T-v\at w}$ is $b$ times the coefficient of $e_{(\lambda,b+1)}$ in $\y_{T\at v}$. Since $T$ is $(e)$-positive at $v$, it follows then that $T-v$ is $(e)$-positive at $w$. But then $T-v$ is a smaller tree {that is} $(e)$-positive at a vertex $w$ such that either $T-v$ is not a path or $w$ is not one of its endpoints, contradicting the minimality of $T$.

The reverse direction is given by \cite[Proposition 6.4]{GebSag}, where Gebhard and Sagan show that paths are $(e)$-positive at their endpoints.
\end{proof}

\section{$\UBCQSym$ and acyclic orientations}
\label{sec:UBCQSym}
In {this section} we will define the quotient algebra $\UBCQSym$ of $\NCQSym$ by giving a construction analogous to that of the quotient algebra $\UBCSym$ of $\NCSym$. Working in $\UBCQSym$ will aid us in proving theorems relating the coefficients of $\y_{G\at v}$ in the $e$-basis and acyclic orientations.

A \textit{marked composition} $\hat\alpha$ of $d\in \mathbb Z_{>0}$ is a composition of $d$ with a distinguished part, which we identify by writing a caret above {it}. For example, $(2,\hat2,3)$ is a marked composition of $7$ with underlying composition $(2,2,3)$ and distinguished part chosen to be the second $2$. Given a set composition $\Phi\vDash[d]$ with $d\in \mathbb Z_{>0}$, define $\type(\Phi)$ to be the marked composition with underlying composition $\alpha(\Phi)$ and distinguished part corresponding to the part of $\Phi$ containing $d$, e.g. $\type(13\sepsc 45\sepsc 2)=(2,\hat 2,1)$. When $\Phi$ is the empty set composition, write $\type(\Phi)=(\emptyset)$.

Recall that one basis of $\NCQSym$ consists of the $M_\Phi$ over all set compositions $\Phi$. We can define $\UBCQSym$ first as the free vector space spanned by elements $M_{\type(\Phi)}$ over all set compositions $\Phi$. It is naturally a quotient vector space of $\NCQSym$ via the linear projection map $\projUBC:\NCQSym\to\UBCQSym$ sending each $M_\Phi\mapsto M_{\type(\Phi)}$.

{We define an action of $\Sg_d$ on $\NCQSym^d$ by permuting the positions of the variables, analogous to the action of $\Sg_d$ on $\NCSym^d$. It can be verified from the definitions of the monomial basis of $\NCQSym$ and the action of $\delta\in\Sg_d$ on $\NCQSym^d$ that
$$\delta\circ M_{\Phi}=M_{\delta(\Phi)},$$
where the action of $\delta$ on set compositions of $[d]$ is by permuting the elements of the blocks.}

Note for $\Phi\vDash [d]$ and $\delta\in \Sg_d$ fixing $d$ that $\projUBC(\delta\circ M_\Phi)=\projUBC(M_{\delta(\Phi)})=\projUBC(M_\Phi)$. Extending linearly, for any $f\in\NCQSym^d$ and $\delta\in \Sg_d$ fixing $d$, we also have $\projUBC(\delta\circ f)=\projUBC(f)$. The kernel of $\projUBC$ is given by
$$\ker\projUBC=\spam\{\delta\circ f-f\mid f\in\NCQSym^d,\, \delta\in\Sg_d,\,\delta(d)=d\in\mathbb Z_{>0}\}.$$
For homogeneous $f\in\NCQSym^d$ and $g\in\NCQSym^{d'}$ with $\delta\in \Sg_d$ fixing $d\in\mathbb Z_{>0}$, we have
$$(\delta\circ f -f)g = \delta\circ fg - fg,$$
where in the right-hand side we extend $\delta\in\Sg_d$ to a permutation in $\Sg_{d+d'}$ fixing the last $1+d'$ positions, and
$$g(\delta\circ f - f)=\delta'\circ gf-gf,$$
where $\delta'\in\Sg_{d+d'}$ fixes the first $d'$ positions and permutes positions $1+d',\dots,d+d'$ according to $\delta\in \Sg_d$ (and, in particular, fixes $d+d'$). Extending bilinearly, we see that $\ker\projUBC$ is a two-sided graded ideal of $\NCQSym$, so $\UBCQSym$ is a graded quotient algebra of $\NCQSym$ with
$$\UBCQSym=\NCQSym/\spam\{\delta\circ f-f\mid f\in\NCQSym^d,\, \delta\in\Sg_d,\,\delta(d)=d\in\mathbb Z_{>0}\}.$$
We will write $\UBCQSym^d=\projUBC(\NCQSym^d)$ to denote the homogeneous part of degree $d$ in $\UBCQSym$.

Note that the $\projUBC(m_\pi)$ over set partitions $\pi$ of distinct types are linearly independent in $\UBCQSym$, because $\type(\pi)=(\lambda,b)$ if and only if the coefficient of $M_{(\lambda_1,\dots,\lambda_{\ell(\lambda)},\hat b)}$ is nonzero in the expansion of $\projUBC(m_\pi)$ in the $M$-basis. Therefore $\UBCSym$ is a subalgebra of $\UBCQSym$.

A \textit{labelled composition} of $d\in\mathbb Z_{>0}$ is a pair $(\delta,\alpha)$ consisting of a permutation in $\Sg_d$ and a composition of $d$. We may also write a labelled composition more compactly by writing the permutation in one-line notation and including commas to indicate the parts of the composition. For example, we write $(14,52,3)$ to mean the labelled composition with underlying permutation $14523$ and underlying composition $(2,2,1)$.

Given a labelled composition $(\delta,\alpha)$ of $d$, define the quasisymmetric function in noncommuting variables
$$F_{(\delta,\alpha)} = \sum_{\substack{i_{\delta(1)}\le\dots \le i_{\delta(d)} \\ i_{\delta(j)}<i_{\delta(j+1)}\text{ if }j\in\set(\alpha)}}  x_{i_1}\cdots x_{i_d}.$$
For example,
$$F_{(14,52,3)}=\sum_{i_1\le i_4 < i_5 \le i_2 <i_3} x_{i_1}\cdots x_{i_5}.$$
Note for $\delta,\delta'\in\Sg_d$ that
$$F_{(\delta\delta',\alpha)}=\delta\circ F_{(\delta',\alpha)}.$$

For a set composition $\Phi=\Phi_1\sepsc \cdots\sepsc \Phi_{\ell(\Phi)}$, define
$$\Q_{\Phi} = F_{(\Phi_{1}^r,\dots, \Phi_{\ell(\Phi)}^r)},$$
{where} $\Phi_j^r$ denotes the elements in $j$th part of $\Phi$ written in descending order. As an example, $\Q_{13\sepsc 45\sepsc 2} = F_{(31,54,2)}.$ The $\Q_\Phi$ for $\Phi\vDash [d]$ form a basis for $\NCQSym^d$ by an upper-triangularity argument against the $M_\Phi$, after ordering set compositions by refinement.

Note if set compositions $\Phi,\Psi\vDash[d]$ are of the same type $\hat\alpha$ with underlying composition $\alpha$ and $k$th part marked, then $\projUBC(\Q_\Phi)=\projUBC(\Q_\Psi)$. This follows because $(\Phi_1^r,\dots,\Phi_{\ell(\Phi)}^r)$ and $(\Psi_1^r,\dots,\Psi_{\ell(\Psi)}^r)$ both have underlying composition $\alpha$, and their underlying permutations $\delta,\delta'$ satisfy $\delta(\sum_{j=1}^{k-1} \alpha_j+1)=\delta'(\sum_{j=1}^{k-1} \alpha_j+1)=d$, and so $\delta\delta'^{-1}\in \Sg_d$ fixes $d$, and therefore
$$\projUBC(\Q_\Phi)=\projUBC(F_{(\delta,\alpha)})=\projUBC(\delta\delta'^{-1}\circ F_{(\delta',\alpha)})=\projUBC(F_{(\delta',\alpha)})=\projUBC(\Q_{\Psi}).$$

Define then $\Q_{\type(\Phi)} = \projUBC(\Q_\Phi)$. Because the $\Q_\Phi$ over all set compositions of $[d]$ span $\NCQSym^d$, it follows that the $\Q_{\type(\Phi)}$ over all set compositions of $[d]$ span $\UBCQSym^d$. Moreover, since the dimension of $\UBCQSym^d$ is exactly the number of marked compositions of $d$, it follows that the $\Q_{\type(\Phi)}$ over all set compositions of $d$ form a basis for $\UBCQSym^d$.

A \textit{labelled poset} on $d\in\mathbb Z_{>0}$ elements is a poset given by the set $[d]$ with a partial ordering $<_P$. For a labelled poset $P$ on $d$ elements, define the quasisymmetric function in noncommuting variables
$$Y_P =\sum_{\kappa}x_{\kappa(1)}\dots x_{\kappa(d)},$$ where the sum is over all maps $\kappa:[d]\to \mathbb Z_{>0}$ satisfying $\kappa(i)<\kappa(j)$ whenever $i<_Pj$. Also let $\y_P$ denote $\projUBC(Y_P)$. We can define the action of $\delta\in\Sg_d$ on labelled posets on $d$ elements by permuting labels, e.g. the labelled poset $\delta(P)$ is a relabelling of $P$. It follows by the definition of $Y_P$ and the action of $\delta$ on $\NCQSym^d$ and on labelled posets on $d$ elements that $Y_{\delta(P)}=\delta\circ Y_P$.

We will require the following lemma, which follows by \cite[Lemma 4.5.3(b)]{ECI} from the theory of $P$-partitions.
\begin{lemma}\label{lem:P-part}
Let $P$ be a labelled poset on $d$ elements, and let $s$ be a fixed linear extension of $P$. If $w$ is another linear extension of $P$, given by $i_1<_w\cdots<_w i_d$, then define $\delta_w\in \Sg_d$ to be the permutation $i_1\cdots i_d$, and $\alpha_{w}^s$ to be the composition of $d$ satisfying $\set(\alpha_w^s)=\{j \mid i_j <_s i_{j+1}\}$.

Then
$$Y_P=\sum_w F_{(\delta_w, \alpha_{w}^s)},$$
where the sum is over all linear extensions $w$ of $P$.
\end{lemma}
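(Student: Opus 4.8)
The plan is to recognize both sides as generating functions over a common set of colourings and match them term by term; this is the noncommutative avatar of the fundamental lemma of $P$-partitions, and I would ultimately present it as a specialization of \cite[Lemma 4.5.3(b)]{ECI} with every order relation taken strict. Call a map $\kappa\colon[d]\to\mathbb Z^+$ a \emph{strict $P$-colouring} if $\kappa(a)<\kappa(b)$ whenever $a<_Pb$, so that $Y_P$ is precisely $\sum_\kappa x_{\kappa(1)}\cdots x_{\kappa(d)}$ over strict $P$-colourings. Unwinding the definition of $F_{(\delta,\alpha)}$ and of $\delta_w,\alpha_w^s$, for a linear extension $w$ with $i_1<_w\cdots<_w i_d$ the series $F_{(\delta_w,\alpha_w^s)}$ equals $\sum_\kappa x_{\kappa(1)}\cdots x_{\kappa(d)}$ over the colourings $\kappa$ that are \emph{$(w,s)$-compatible}, meaning $\kappa(i_1)\le\cdots\le\kappa(i_d)$ together with $\kappa(i_k)<\kappa(i_{k+1})$ whenever $i_k<_s i_{k+1}$. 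So the whole lemma reduces to the claim that the set of strict $P$-colourings is the disjoint union, over linear extensions $w$ of $P$, of the sets of $(w,s)$-compatible colourings.

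First I would prove that a $(w,s)$-compatible $\kappa$ is a strict $P$-colouring. Given $a<_Pb$, both $w$ and $s$ are linear extensions of $P$, so $a<_w b$ and $a<_s b$; writing $a=i_k$, $b=i_l$ with $k<l$, the values $\kappa(i_k)\le\cdots\le\kappa(i_l)$ weakly increase along the $w$-chain, and not every consecutive step $i_m,i_{m+1}$ can be an $s$-descent, for that would yield $i_k>_s i_l$, contradicting $a<_s b$; hence some step is an $s$-ascent, which by $(w,s)$-compatibility forces a strict inequality there, so $\kappa(a)<\kappa(b)$.

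Next I would show each strict $P$-colouring $\kappa$ is $(w,s)$-compatible for exactly one linear extension $w$. For existence, define a total order $<_w$ on $[d]$ by declaring $a<_w b$ when $\kappa(a)<\kappa(b)$, or $\kappa(a)=\kappa(b)$ and $b<_s a$; strictness of $\kappa$ makes $<_w$ extend $<_P$, weak monotonicity of $\kappa$ along $<_w$ is immediate from the definition, and if $i_k<_s i_{k+1}$ then a tie $\kappa(i_k)=\kappa(i_{k+1})$ would force $i_{k+1}<_s i_k$ by construction of $<_w$, a contradiction, so the required strict inequality holds and $\kappa$ is $(w,s)$-compatible. For uniqueness, if $\kappa$ is $(w',s)$-compatible then for $a,b$ with $\kappa(a)\ne\kappa(b)$ weak monotonicity forces their $w'$-order, while if $\kappa(a)=\kappa(b)$ the colour is constant on the $w'$-interval between them, so no step there may be an $s$-ascent, forcing that interval to be $s$-decreasing and hence $a,b$ to be $w'$-ordered by reverse $s$-order; this coincides with the rule defining $<_w$, so $w'=w$. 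Summing the monomials $x_{\kappa(1)}\cdots x_{\kappa(d)}$ over this partition of the strict $P$-colourings then gives $Y_P=\sum_w F_{(\delta_w,\alpha_w^s)}$.

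The step I expect to be the crux is uniqueness, equivalently disjointness: one must check that the tie-breaking hidden in $\alpha_w^s$ — reverse $s$-order within each level set of $\kappa$ — is exactly the rule compelled by the recorded strict inequalities, so that distinct linear extensions index genuinely disjoint colouring sets rather than overlapping ones. The remaining bookkeeping, chiefly tracking which colour occupies which position via $\delta_w$ so that the two sides agree as noncommutative words and not merely as multisets, is routine, and the argument is in the end just \cite[Lemma 4.5.3(b)]{ECI} transported to the noncommutative setting.
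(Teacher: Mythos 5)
Your proof is correct and takes essentially the same approach as the paper: the paper simply invokes the fundamental lemma of $P$-partitions (\cite[Lemma 4.5.3(b)]{ECI}), and your argument is a self-contained proof of exactly that decomposition---partitioning the strict $P$-colourings over linear extensions, with ties broken by reverse $s$-order---carried over to the noncommuting-variable setting, including the correct verification of disjointness. Nothing further is needed.
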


\begin{corollary}\label{cor:qpos}
Let $P$ be a labelled poset on $d$ elements. Let $s$ be a linear extension of $P$ satisfying $i>_s d$ if and only if $i>_P d$, and let $\eps\in\Sg_d$ be the permutation satisfying
$$\eps\delta_s(i)=\begin{cases}
i&\text{if $i<\delta_s^{-1}(d)$},\\
d &\text{if $i=\delta_s^{-1}(d)$},\\
i-1 &\text{if $i>\delta_s^{-1}(d)$}.\\
\end{cases}$$

Then the coefficient of $\Q_{\hat\alpha}$, where $\hat\alpha$ has underlying composition $\alpha$ and $k$th part marked, in the $\Q$-expansion of $\y_P$ counts the number of linear extensions $w$ of $\eps(P)$ satisfying $\alpha_w^{\eps(s)}=\alpha$ and $\delta_w(\sum_{j=1}^{k-1}\alpha_j+1)=d$. In particular, $\y_P$ is $\Q$-positive.
\end{corollary}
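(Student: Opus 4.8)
The plan is to apply Lemma~\ref{lem:P-part} to the relabelled poset $\eps(P)$ together with its linear extension $\eps(s)$, and then to identify the $\projUBC$-image of each fundamental summand as a single basis element of the form $\Q_{\hat\alpha}$.

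First I would set up two observations. Write $j_0=\delta_s^{-1}(d)$. The defining relation for $\eps$ says that $\eps\delta_s$ is, in one-line notation, the permutation $(1,\dots,j_0-1,\,d,\,j_0,\dots,d-1)$; in particular $\eps(d)=\eps(\delta_s(j_0))=(\eps\delta_s)(j_0)=d$, so $\eps$ fixes $d$. Hence $Y_{\eps(P)}=\eps\circ Y_P$ together with the fact that $\projUBC(\eps\circ f)=\projUBC(f)$ for $f\in\NCQSym^d$ gives $\y_{\eps(P)}=\y_P$, while $\eps(s)$ is a linear extension of $\eps(P)$. Moreover, transporting the hypothesis that $i>_s d$ iff $i>_P d$ through $\eps$ shows that the elements lying strictly above $d$ in $\eps(P)$ are exactly $\{j_0,\dots,d-1\}$, which by inspection of $\eps\delta_s$ is also exactly the set of elements lying above $d$ in the chain $\eps(s)$; consequently the elements lying below $d$ in $\eps(s)$ are precisely $\{1,\dots,j_0-1\}$.

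Now I would apply Lemma~\ref{lem:P-part} to $\eps(P)$ with linear extension $\eps(s)$ and push the result through $\projUBC$, so that $\y_P=\y_{\eps(P)}=\sum_w\projUBC(F_{(\delta_w,\alpha_w^{\eps(s)})})$, the sum over linear extensions $w$ of $\eps(P)$. The crucial point is that for each such $w$, writing $\delta_w=(i_1,\dots,i_d)$ and $p_0=\delta_w^{-1}(d)$, the value $d$ sits at the first position of its block in $\delta_w$, i.e.\ $p_0=1$ or $p_0-1\in\set(\alpha_w^{\eps(s)})$. Indeed, if $p_0>1$ then $i_{p_0-1}<_w d$, so $i_{p_0-1}$ is not above $d$ in $\eps(P)$, hence by the previous paragraph $i_{p_0-1}\in\{1,\dots,j_0-1\}$, so $i_{p_0-1}<_{\eps(s)}d=i_{p_0}$ and therefore $p_0-1\in\set(\alpha_w^{\eps(s)})$. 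Granting this, I would then show $\projUBC(F_{(\delta_w,\alpha_w^{\eps(s)})})=\Q_{\hat\alpha_w}$, a single basis element, where $\hat\alpha_w$ has underlying composition $\alpha_w^{\eps(s)}$ with the part containing $p_0$ marked: choose $\sigma\in\Sg_d$ fixing $d$ for which $\sigma\delta_w$ lists, block by block, its values in decreasing order — possible because the only forced placement, that of the largest value $d$, is already at $p_0$, the start of its block — so that $F_{(\sigma\delta_w,\alpha_w^{\eps(s)})}=\Q_\Phi$ for the set composition $\Phi$ whose blocks are the value-sets of the blocks of $\sigma\delta_w$; since $\sigma$ fixes $d$, $\projUBC(F_{(\delta_w,\alpha_w^{\eps(s)})})=\projUBC(\Q_\Phi)=\Q_{\type(\Phi)}=\Q_{\hat\alpha_w}$.

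Summing, $\y_P=\sum_w\Q_{\hat\alpha_w}$, so the coefficient of $\Q_{\hat\alpha}$ with underlying composition $\alpha$ and $k$th part marked is the number of linear extensions $w$ of $\eps(P)$ with $\alpha_w^{\eps(s)}=\alpha$ and with $d$ at the start of the $k$th block of $\delta_w$, i.e.\ $\delta_w(\sum_{j=1}^{k-1}\alpha_j+1)=d$; as these counts are nonnegative integers, $\y_P$ is $\Q$-positive. I expect the main obstacle to be the crucial point above — correctly unwinding the definition of $\eps$ and the hypothesis on $s$ so as to force $d$ to the start of its block for \emph{every} linear extension of $\eps(P)$. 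The subsequent identification of the projection with a single $\Q_{\hat\alpha}$ is then a routine choice-of-representative computation, as is the side remark that a linear extension $s$ with the required property always exists, obtained by listing the order ideal $\{e:e\not>_P d\}$ first with $d$ placed last within it.
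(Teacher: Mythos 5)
Your proposal is correct and follows essentially the same route as the paper: relabel by $\eps$ (which fixes $d$), apply Lemma~\ref{lem:P-part} to $\eps(P)$ with the linear extension $\eps(s)$, and use the hypothesis on $s$ to show that the element immediately preceding $d$ in any linear extension $w$ of $\eps(P)$ is $<_{\eps(s)}d$, so that $d$ heads its block. The only cosmetic difference is the final identification: the paper notes that every within-block adjacent pair of $\delta_w$ is in fact decreasing in $\mathbb{Z}^+$, so each $F_{(\delta_w,\alpha_w^{\eps(s)})}$ is literally some $\Q_\Phi$, whereas you only pin $d$ to the start of its block and then re-sort the blocks via a permutation fixing $d$, invoking the invariance of $\projUBC$ under such permutations --- both steps are valid and yield the same count.
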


\begin{proof}
Observe that $\eps$ fixes $d$, since $\eps(d)=\eps\delta_s\delta_s^{-1}(d)=d$. Then
$$\y_P=\projUBC(Y_P)=\projUBC(\eps\circ Y_P)=\projUBC(Y_{\eps(P)})=\y_{\eps(P)}.$$
Next consider the linear extension $\eps(s)$ of $\eps(P)$, given by $$1<_{\eps(s)}\cdots <_{\eps(s)} \delta_s^{-1}(d)-1 <_{\eps(s)} d <_{\eps(s)} \delta_s^{-1}(d) <_{\eps(s)}\cdots <_{\eps(s)} d-1.$$ Note that we have $i>_{\eps(s)} d$ if and only if $i>_{\eps(P)}d$.

For any linear extension $w$ of $\eps(P)$, we note $i\not\in \set(\alpha_w^{\eps(s)})$ if and only if $\delta_w(i)>_{\eps(s)} \delta_w(i+1)$, which occurs only if $\delta_w(i)>\delta_w(i+1)$ in $\mathbb Z_{>0}$ or {if} $\delta_w(i+1)=d$. The latter case cannot occur, because $\delta_w(i)>_{\eps(s)}d$ if and only if $\delta_w(i)>_{\eps(P)} d$, and so $\delta_w(i)>_w d$ since $w$ is a linear extension of $\eps(P)$. So $i\not\in\set(\alpha_w^{\eps(s)})$ only if $\delta_w(i)>\delta_w(i+1)$ in $\mathbb Z_{>0}$, implying that the marked composition ${(\delta_w, \alpha_w^{\eps(s)})}$ is of the form $(\Phi_{1}^r,\dots, \Phi_{\ell(\Phi)}^r)$ for some set composition $\Phi\vDash[d]$.

The result then follows by applying Lemma~\ref{lem:P-part} to $\eps(P)$ with the linear extension $\eps(s)$ and studying the projection in $\NCQSym^d$, since then each $F_{(\delta_w,\alpha_w^{\eps(s)})}$ is equal to some $\Q_\Phi$.
\end{proof}

We can now \svw{state} a noncommutative analogue of Stanley's \cite[Theorem 3.3]{Stan95}. \svw{It is proved by adapting Stanley's original proof using $P$-partitions and using the linear map $\varphi:\UBCQSym^{|G|}\to \mathbb Q[t]$ satisfying
$$\varphi(\Q_{\hat\alpha})=\begin{cases}
t(t-1)^{k-1}&\text{if }\hat\alpha=(1^{|G|-k},\hat k),\\
0&\text{otherwise.}
\end{cases}$$ }

\begin{theorem} \label{the:vsink}
Suppose $$\y_{G\at v} = \sum_{|\lambda|+b=|G|}c_{(\lambda,b)}e_{(\lambda,b)}.$$
Let $\sink_v(G,j)$ count the number of acyclic orientations of $G$ with $j$ sinks, including a sink at $v$. Then
$$\sink_{v}(G,j)=\sum_{\substack{|\lambda|+b=|G|\\\ell(\lambda)+1=j}}c_{(\lambda,b)}\lambda!(b-1)!.$$
\end{theorem}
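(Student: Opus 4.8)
The plan is to adapt Stanley's $P$-partition proof of \cite[Theorem 3.3]{Stan95} to $\UBCQSym$, using Corollary~\ref{cor:qpos}. Give $G$ a labelling with $v$ last, write $d=|G|$, and for an acyclic orientation $o$ of $G$ let $P_o$ be the labelled poset on $[d]$ that is the transitive closure of $o$. The sinks of $o$ are exactly the maximal elements of $P_o$; in particular $v$ is a sink of $o$ if and only if $v$ is maximal in $P_o$, so $\sum_j \sink_v(G,j)\,t^j = \sum_{o\,:\,v\in\max P_o} t^{|\max P_o|}$. The first step is the identity $\y_{G\at v}=\sum_{o}\y_{P_o}$, the sum over all acyclic orientations of $G$: in $\NCQSym$ one has $Y_G=\sum_o Y_{P_o}$ by the bijection that sends a proper colouring $\kappa$ of $G$ to the pair consisting of the orientation $o(\kappa)$ it induces on the edges (orient $uw$ towards the endpoint of larger colour) together with $\kappa$ itself, now a strict $P_{o(\kappa)}$-partition; indeed $o(\kappa)=o$ precisely when $\kappa(u)<\kappa(w)$ for every relation $u<_{P_o}w$. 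Applying $\projUBC$ gives the claim, and note that each linear order of $V(G)$ is a linear extension of exactly one acyclic orientation.

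By the observations above, the theorem is equivalent to showing that the linear functional $T\colon\UBCSym^d\to\mathbb{Q}[t]$ determined by $T(e_{(\lambda,b)})=\lambda!\,(b-1)!\,t^{\ell(\lambda)+1}$ satisfies $T(\y_{G\at v})=\sum_j \sink_v(G,j)\,t^j$ for every $(G,v)$. The reduction to complete graphs of Section~\ref{sec:complete} pins $T$ down: since $e_{(\lambda,b)}=\y_{K_{\lambda_1}\mid\cdots\mid K_{\lambda_{\ell(\lambda)}}\mid K_b}$, and the disjoint union of cliques on the right has exactly $\lambda!\,(b-1)!$ acyclic orientations in which its centre is a sink --- each with precisely $\ell(\lambda)+1$ sinks, one per clique --- and none in which the centre is not a sink, $T$ already agrees with the sink generating function on disjoint unions of complete graphs. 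To transport this to an arbitrary $(G,v)$ I would expand $\y_{G\at v}=\sum_o\y_{P_o}$ in the $\Q$-basis of $\UBCQSym^d$ by applying Corollary~\ref{cor:qpos} to each $P_o$ (choosing for each $o$ a linear extension $s$ with $i>_s d\iff i>_{P_o}d$, so that $d$ is the $s$-greatest element exactly when $v$ is a sink of $o$), compute the transition from this $\Q$-expansion to the $e$-expansion using the explicitly computable $\Q$-expansions of the complete-graph elements $e_{(\lambda,b)}$, and read off the coefficients $c_{(\lambda,b)}$.

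The main obstacle is this last transition: one must show that passing from the $\Q$-expansion of $\y_{G\at v}$ to its $e$-expansion interacts with the number-of-sinks statistic in exactly the way it does on disjoint unions of cliques --- equivalently, that $T$, which is defined via the $e$-basis of $\UBCSym^d$, pulls back through $\projUBC$ and Corollary~\ref{cor:qpos} to $\sum_{o\,:\,v\in\max P_o}t^{|\max P_o|}$. This is the $\UBCQSym$-incarnation of the strict/weak order-polynomial reciprocity that powers Stanley's original argument, and the bookkeeping with descent compositions relative to the auxiliary linear extensions $s$ is where the care lies. Once it is in place, comparing coefficients of $t^j$ on the two sides of $T(\y_{G\at v})=\sum_j\sink_v(G,j)\,t^j$ gives the stated formula.
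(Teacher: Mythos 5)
Your setup coincides with the paper's: label $v$ last, decompose $\y_{G\at v}=\sum_o\y_{P_o}$ over acyclic orientations via the colouring/orientation bijection, reduce the theorem to showing that the functional $T$ with $T(e_{(\lambda,b)})=\lambda!\,(b-1)!\,t^{\ell(\lambda)+1}$ sends $\y_{G\at v}$ to $\sum_j\sink_v(G,j)t^j$, and note that the clique computation forces these values of $T$ on the $e$-basis. But the argument does not close, and the place where it stops is exactly the substance of the proof. The individual summands $\y_{P_o}$ lie in $\UBCQSym^{|G|}$, not $\UBCSym^{|G|}$, so $T$ cannot be applied termwise; what is needed is an extension of $T$ to $\UBCQSym^{|G|}$ that evaluates to $t^{j}$ on $\y_P$ whenever $|G|$ is one of the $j$ maximal elements of $P$, and to $0$ whenever $|G|$ is not maximal (this vanishing is essential, since your sum runs over \emph{all} acyclic orientations, not only those with a sink at $v$). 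Such an extension is far from unique, so the ``transition from the $\Q$-expansion to the $e$-expansion'' is not something one can simply read off; choosing and verifying the right extension is precisely what you defer as ``the main obstacle,'' and no substitute argument (your appeal to an unproved $\UBCQSym$-reciprocity) is supplied.

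For comparison, the paper resolves this by defining $\varphi$ on the $\Q$-basis of $\UBCQSym^{|G|}$ by $\varphi(\Q_{\hat\alpha})=t(t-1)^{k-1}$ if $\hat\alpha=(1^{|G|-k},\hat k)$ and $0$ otherwise. Corollary~\ref{cor:qpos} then shows that when $|G|$ is maximal in $P$ with $j$ maximal elements in total, the coefficient of $\Q_{(1^{|G|-k},\hat k)}$ in $\y_P$ equals $\binom{j-1}{k-1}$ (the admissible linear extensions are obtained by choosing $k-1$ maximal elements other than $|G|$ and listing the rest in increasing order), whence $\varphi(\y_P)=\sum_k\binom{j-1}{k-1}t(t-1)^{k-1}=t^{j}$; and when $|G|$ is not maximal, the vanishing of the relevant $M$-coefficients combined with the $\Q$-positivity of $\y_P$ from Corollary~\ref{cor:qpos} gives $\varphi(\y_P)=0$. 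Only then does the reduction to disjoint unions of cliques yield $\varphi(e_{(\lambda,b)})=\lambda!\,(b-1)!\,t^{\ell(\lambda)+1}$, and equating coefficients finishes the proof. To complete your proposal you would need to carry out this construction (or an equivalent one), including the descent-composition count giving $\binom{j-1}{k-1}$ and the vanishing argument for orientations in which $v$ is not a sink; as written, the proof has a genuine gap at its central step.
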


\begin{example}
We will apply Theorem~\ref{the:vsink} to the path $P_4$ and its last vertex. We can compute
$$\y_{P_4}=\frac{1}{3}e_{((3),1)}+\frac{1}{2}e_{((2),2)}+\frac{1}{6}e_{(\emptyset, 4)}.$$ By the theorem, there are exactly $\frac{1}{3}3!0! + \frac{1}{2}2!1! = 3$ acyclic orientations of $P_4$ with two sinks, one of which is at the last vertex, and $\frac{1}{6}3!=1$ acyclic orientation of $P_4$ with a unique sink at the last vertex. These are shown in Figure~\ref{fig:sinks} below.

\begin{figure}[H]
\caption{} 
\label{fig:sinks}
\begin{tikzpicture}
\coordinate (A) at (0,0);
\coordinate (B) at (1,0);
\coordinate (C) at (2,0);
\coordinate (D) at (3,0);
\draw[-{>[scale=1.5]}] (B) -- (A);
\draw[-{>[scale=1.5]}] (C) -- (B);
\draw[-{>[scale=1.5]}] (C) -- (D);

\filldraw[black] (A) circle [radius=2pt];
\filldraw[black] (B) circle [radius=2pt];
\filldraw[black] (C) circle [radius=2pt];
\filldraw[black] (D) circle [radius=2pt] node[below] {\scriptsize $4$};

\coordinate (A) at (5,0);
\coordinate (B) at (6,0);
\coordinate (C) at (7,0);
\coordinate (D) at (8,0);
\draw[-{>[scale=1.5]}] (B) -- (A);
\draw[-{>[scale=1.5]}] (B) -- (C);
\draw[-{>[scale=1.5]}] (C) -- (D);

\filldraw[black] (A) circle [radius=2pt];
\filldraw[black] (B) circle [radius=2pt];
\filldraw[black] (C) circle [radius=2pt];
\filldraw[black] (D) circle [radius=2pt] node[below] {\scriptsize $4$};

\coordinate (A) at (10,0);
\coordinate (B) at (11,0);
\coordinate (C) at (12,0);
\coordinate (D) at (13,0);
\draw[-{>[scale=1.5]}] (A) -- (B);
\draw[-{>[scale=1.5]}] (C) -- (B);
\draw[-{>[scale=1.5]}] (C) -- (D);

\filldraw[black] (A) circle [radius=2pt];
\filldraw[black] (B) circle [radius=2pt];
\filldraw[black] (C) circle [radius=2pt];
\filldraw[black] (D) circle [radius=2pt] node[below] {\scriptsize $4$};
\end{tikzpicture}
\begin{tikzpicture}
\coordinate (A) at (0,0);
\coordinate (B) at (1,0);
\coordinate (C) at (2,0);
\coordinate (D) at (3,0);
\draw[-{>[scale=1.5]}] (A) -- (B);
\draw[-{>[scale=1.5]}] (B) -- (C);
\draw[-{>[scale=1.5]}] (C) -- (D);

\filldraw[black] (A) circle [radius=2pt];
\filldraw[black] (B) circle [radius=2pt];
\filldraw[black] (C) circle [radius=2pt];
\filldraw[black] (D) circle [radius=2pt] node[below] {\scriptsize $4$};
\end{tikzpicture}
\end{figure}
\end{example}

\begin{remark}
Gebhard and Sagan's \cite[Theorem 4.4]{GebSag} is Theorem~\ref{the:vsink} in the special case of $j=1$.
\end{remark}

From Theorem~\ref{the:vsink} we can recover Stanley's original \cite[Theorem 3.3]{Stan95}.

\begin{corollary}
\cite[Theorem 3.3]{Stan95}\label{cor:sink}
Suppose $$X_G=\sum_{\lambda\vdash |G|}c_\lambda e_\lambda.$$ Let $\sink(G,j)$ count the number of acyclic orientations of $G$ with $j$ sinks. Then $$\sink(G,j)=\sum_{\substack{\lambda\vdash |G|\\\ell(\lambda)=j}}c_\lambda.$$
\end{corollary}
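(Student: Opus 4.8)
The plan is to deduce the statement from Theorem~\ref{the:vsink} by padding $G$ with an isolated vertex. Set $n=|G|$ and consider the disjoint union $G\mid K_1$ on $n+1$ vertices, taking the isolated vertex $u$ as the distinguished vertex (so $u$ is labelled last). The first observation is that, since $u$ is incident to no edges, the acyclic orientations of $G\mid K_1$ are in natural bijection with those of $G$, with $u$ always an isolated sink; hence
$$\sink_u(G\mid K_1,\,j+1)=\sink(G,j)\qquad\text{for every }j\ge 0,$$
the condition ``including a sink at $u$'' being automatic.

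Next I would compute $\y_{(G\mid K_1)\at u}$ in terms of $X_G$. Writing $Y_G=\sum_{\sigma\vdash[n]}a_\sigma e_\sigma$, the identities $Y_{G\mid K_1}=Y_GY_{K_1}$, $Y_{K_1}=e_{\{1\}}$, and $e_\sigma e_{\{1\}}=e_{\sigma\slashp\{1\}}$ (which adjoins the singleton block $\{n+1\}$, so that $\type(\sigma\slashp\{1\})=(\lambda(\sigma),1)$) give, after applying $\projUBC$,
$$\y_{(G\mid K_1)\at u}=\sum_{\sigma\vdash[n]}a_\sigma\,e_{(\lambda(\sigma),1)};$$
in particular the only $e$-basis elements occurring have $b=1$. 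To pin down the coefficients I would apply $\rho$: from $\rho(e_\sigma)=\sigma!\,e_{\lambda(\sigma)}=\lambda(\sigma)!\,e_{\lambda(\sigma)}$ and $\rho(Y_G)=X_G=\sum_\lambda c_\lambda e_\lambda$ one reads off $\sum_{\sigma\,:\,\lambda(\sigma)=\lambda}a_\sigma=c_\lambda/\lambda!$, so the coefficient of $e_{(\nu,1)}$ in $\y_{(G\mid K_1)\at u}$ equals $c_\nu/\nu!$ for each $\nu\vdash n$, while all other $e$-coefficients vanish.

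Finally I would apply Theorem~\ref{the:vsink} to $G\mid K_1$ at the vertex $u$. Only the $b=1$ terms contribute on the right-hand side, and there $(b-1)!=1$, so
$$\sink_u(G\mid K_1,\,j+1)=\sum_{\substack{\nu\vdash n\\ \ell(\nu)+1=j+1}}\frac{c_\nu}{\nu!}\cdot\nu!=\sum_{\substack{\nu\vdash n\\ \ell(\nu)=j}}c_\nu,$$
and comparing with the first step gives $\sink(G,j)=\sum_{\lambda\vdash|G|,\ \ell(\lambda)=j}c_\lambda$, as claimed.

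I do not anticipate any serious obstacle: once the auxiliary graph is chosen, every step uses a fact already available above, and the computations merely track how $\projUBC$, $\rho$, and the slash product interact, together with the elementary fact that adjoining an isolated vertex adds exactly one sink. The one point I would flag as the crux is the choice of $G\mid K_1$ itself: this is precisely what converts the $v$-centred sink count of Theorem~\ref{the:vsink} into the ordinary sink count of $G$ while simultaneously forcing the distinguished block to have size $1$, i.e.\ isolating the part of $\y_{G\at v}$ that records $X_G$ (up to the normalising factors $\lambda!$).
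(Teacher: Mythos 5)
Your proof is correct, but it takes a genuinely different route from the paper's. The paper deduces Corollary~\ref{cor:sink} from Theorem~\ref{the:vsink} by summing over all vertices: it uses the double-counting identity $\sum_{v\in V(G)}\sink_v(G,j)=j\,\sink(G,j)$, computes that the resulting linear map on $\NCSym^{|G|}$ sends $e_\pi\mapsto\pi!\,t^{\ell(\pi)}$, and then observes that this descends to a map on $\Sym^{|G|}$ sending $e_\lambda\mapsto t^{\ell(\lambda)}$ and $X_G\mapsto\sum_j\sink(G,j)t^j$. You instead apply Theorem~\ref{the:vsink} a single time, to $G\mid K_1$ centred at the isolated vertex $u$: the identities $Y_{G\mid K_1}=Y_GY_{K_1}$ and $e_\sigma e_{1}=e_{\sigma\slashp 1}$ force $\y_{(G\mid K_1)\at u}$ to be supported on the $e_{(\nu,1)}$ with coefficients $c_\nu/\nu!$ (correctly read off via $\rho$), the factor $\lambda!(b-1)!$ with $b=1$ then restores $c_\nu$, and the elementary bijection between acyclic orientations of $G\mid K_1$ and of $G$ (with $u$ an automatic extra sink) converts $\sink_u(G\mid K_1,j+1)$ into $\sink(G,j)$. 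All the ingredients you invoke are available in the paper, so there is no gap. What each approach buys: the paper's argument stays intrinsic to $G$ and exhibits explicitly the linear map $X_G\mapsto\sum_j\sink(G,j)t^j$ (which it then connects to the acyclic orientation polynomial literature in the subsequent remark), at the cost of the averaging step and of checking that the map is well defined on $\Sym$; your padding trick avoids both the division by $j$ and the descent-to-$\Sym$ verification, and it dovetails nicely with the paper's own use of the isolated-vertex case in the proof of Theorem~\ref{the:N(v)sink}, where the implication runs in the opposite direction (from Corollary~\ref{cor:sink} to the $|N(v)|=0$ case) -- so one should just make sure no circularity is introduced, and indeed there is none, since your derivation relies only on Theorem~\ref{the:vsink}.
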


\begin{remark}
In \cite{HJLOY}, the acyclic orientation polynomial of a graph is introduced and defined to be the generating function for the sinks of a graph's acyclic orientations. It is shown in \cite[Theorem 3.2]{HJLOY} that the acyclic orientation polynomial satisfies a subgraph expansion analogous to \cite[Theorem 2.5]{Stan95} for $X_G$, and so the authors of \cite{HJLOY} were able to prove the existence of a linear map sending $X_G\mapsto \sum_{j=1}^{|G|}\sink(G,j)t^j$, giving a new proof in \cite[Theorem 4.5]{HJLOY} of Stanley's sink theorem (Corollary \ref{cor:sink}) without using $P$-partitions.

In fact, there is a linear map sending $Y_G$ to the acyclic orientation polynomial of a labelled graph $G$, e.g. by comparing \cite[Theorem 3.6]{GebSag} with \cite[Theorem 3.2]{HJLOY}. After composing that with the linear map sending the acyclic orientation polynomial of $G$ to $\sum_{j=1}^{|G|}\sink_v(G,j)t^j$, one can construct the linear map induced on the quotient, sending $\y_{G\at v}\mapsto \sum_{j=1}^{|G|}\sink_v(G,j)t^j$, which gives another proof of {Theorem~\ref{the:vsink}.}
\end{remark}

\section{{The pointed chromatic symmetric function and further avenues}}
\label{sec:further}
In \cite{Paw}, Pawlowski defines \textit{the pointed chromatic symmetric function} $X_{G,v}\in\Sym[t]$ for a graph $G$ with distinguished vertex $v$. From \cite[Theorem 3.6]{GebSag} and \cite[Definition 3.1]{Paw}, one can show that the injective linear map
\begin{align*}
{\eta}:\Sym[t] &\rightarrow \UBCSym\\
p_{\lambda}t^j &\mapsto p_{(\lambda,j+1)}
\end{align*}
sends each $X_{G,v}\mapsto \y_{G\at v}$. That is, results for $X_{G,v}$ lift to results for $\y_{G\at v}$, and vice-versa. For example, the proof of Theorem~\ref{the:G-v} says that there is a linear map $\vartheta\eta:\Sym[t]\to\Sym$ sending each $X_{G,v}\mapsto X_{G-v}$ by evaluating at $t=0$.

Pawlowski studied the expansion of $X_{G,v}$ in the basis of $\Sym[t]$ consisting of \textit{pointed Schur functions}, first considered by Strahov in \cite{Str}. By applying $\eta$, one can construct a basis of Schur functions for $\UBCSym$ and define $(s)$-positivity and appendable $(s)$-positivity in a way analogous to our definitions of $(e)$-positivity and appendable $(e)$-positivity. Then \cite[Theorem 3.15]{Paw} equivalently states that the paths $P_n$ for $n\ge 1$ are appendable $(s)$-positive. It is known that the $(s)$-positivity of $X_{G,v}$ implies the Schur-positivity of $X_G$. It can be shown that $X_{G-v}$ is also Schur-positive, by applying the map $\vartheta\eta$ together with \cite[Theorem 6.4.1]{Str}.

Another basis of $\Sym[t]$ considered by Pawlowski is the basis of \textit{pointed elementary symmetric functions}. One can deduce by \cite[Definition 4.1]{Paw} and \cite[Theorem 4.6]{Paw} that the image of a pointed elementary symmetric function under $\eta$ is an element in $\UBCSym$ of the form $\frac{1}{\lambda!(b-1)!}e_{(\lambda,b)}$. (Note that the same scaling coefficients arise in the proof of Theorem~\ref{the:G-v} and in \svw{Theorem~\ref{the:vsink}.}) Therefore, $X_{G,v}$ is pointed $e$-positive if and only if $\y_{G\at v}$ is $(e)$-positive, and Pawlowski's \cite[Conjecture 2]{Paw} is equivalent to Conjecture~\ref{conj:alle}. Additionally, by \cite[Corollary 4.3]{Paw}, any $(e)$-positive function is necessarily $(s)$-positive, so for any conjecture of $(e)$-positivity, one can also study the weaker conjecture of $(s)$-positivity for the same functions. 

From the framework introduced in this paper, there are various further avenues of study, especially as possible approaches to the Stanley-Stembridge conjecture. It would be interesting if there existed a $q$-analogue of Conjecture~\ref{conj:alle}. Naively defining a chromatic quasisymmetric function centred at a vertex by a construction similar to \cite[Definition 1.2]{SW} gives a function in $\UBCQSym[q]$ but not $\UBCSym[q]$ even just for the labelled unit interval graph $K_2$. It may, however, be possible to define such a function implicitly for labelled unit interval graphs at their rightmost vertex by requiring the function satisfy a $q$-analogue of the relations in Proposition~\ref{prop:(e)avg} and taking the function evaluated at $K_{\lambda_1}\mid\dots\mid{K_{\ell(\lambda)}}\mid K_b$ to be $\frac{[\lambda_1]_q!\cdots[\lambda_{\ell(\lambda)}]_q![b]_q!}{\lambda!b!}e_{(\lambda,b)}$. Abreu and Nigro showed something similar holds for the chromatic quasisymmetric functions of labelled unit interval graphs in \cite[Theorem 1.1]{AN}.

In the first part of \cite[Conjecture 6.1]{DSvWposets}, Dahlberg, She and van Willigenburg conjectured that for any connected labelled unit interval graph $G$, if one constructs a second labelled unit interval graph $G'$ by following a certain procedure, then $G\ge_e G'$ in the chromatic $e$-positivity poset. In \cite[Remark 6.3]{DSvWposets}, they note that this conjecture would imply the Stanley-Stembridge conjecture. It may be fruitful to also study either a $q$-analogue of this newer conjecture or a version of it in $\UBCSym$ (or a version combining the two, if an appropriate definition for the chromatic quasisymmetric function centred at a vertex exists for labelled unit interval graphs at their rightmost vertex). The relations in Corollary~\ref{cor:eavg} and Proposition~\ref{prop:(e)avg}, as well as their $q$-analogues, can help reduce the number of cases needed to prove various versions of the conjecture of Dahlberg, She and van Willigenburg.

\section{Acknowledgments}\label{sec:ack} The authors would like to thank Mathieu Guay-Paquet, Bruce Sagan and John Shareshian for helpful conversations, {and in particular, Samantha Dahlberg for this and sharing her code}. \svw{They also thank the referee for their thoughtful comments.}

\bibliographystyle{plain}

\end{document}